\title{\bf CERTAIN PROPERTIES OF THE POWER GRAPH ASSOCIATED WITH A FINITE GROUP\thanks{
Supported by the National Natural Science Foundation of China
(Grant No. 11171364)}}
\author{{\bf A. R. Moghaddamfar} \ and \ {\bf S. Rahbariyan} \\[0.1cm]
{\em Department of Mathematics, K. N. Toosi
University of Technology,}\\
 {\em P. O. Box $16315$-$1618$, Tehran, Iran}\\[0.1cm]
{\em E-mails}:  {\tt moghadam@kntu.ac.ir} {\em and} {\tt
moghadam@mail.ipm.ir}\\[0.3cm]
{\bf W. J. Shi}\\[0.1cm] {\em Department of Mathematics and Statistics, Chongqing University
of Arts and Sciences,}\\ {\em Chongqing $402160$, China}\\ {\em
E-mail:} {\tt wjshi@suda.edu.cn}}
\newenvironment{proof}{\noindent {\em {Proof}}.}{$\square$
\medskip}
\newtheorem{corollary}{Corollary}
\newtheorem{theorem}{Theorem}
\newtheorem{proposition}{Proposition}
\newtheorem{lm}{Lemma}
\newtheorem{qu}{Question}
\begin{document}
\maketitle
\begin{abstract}
\noindent There are a variety of ways to associate directed or
undirected graphs to a group. It may be interesting to
investigate the relations between the structure of these graphs
and characterizing certain properties of the group in terms of
some properties of the associated graph. The power graph
$\mathcal{P}(G)$ of a group $G$ is a simple graph whose
vertex-set is $G$ and two vertices $x$ and $y$ in $G$ are
adjacent if and only if $y=x^m$ or $x=y^m$ for some positive
integer $m$. We also pay attention to the subgraph
$\mathcal{P}^\ast(G)$ of $\mathcal{P}(G)$ which is obtained by
deleting the vertex 1 (the identity element of $G$). In the
present paper, we first investigate some properties of the power
graph $\mathcal{P}(G)$ and the subgraph $\mathcal{P}^\ast(G)$. We
next prove that many of finite groups such as finite simple
groups, symmetric groups and the automorphism groups of sporadic
simple groups can be uniquely determined by their power graphs
among all finite groups. We have also determined up to
isomorphism the structure of any finite group $G$ such that the
graph $\mathcal{P}^\ast(G)$ is a strongly regular graph, a
bipartite graph, a planar graph or an Eulerian graph. Finally, we
obtained some infinite families of finite groups such that the
graph $\mathcal{P}^\ast(G)$ containing some cut-edges.
\end{abstract} {\small {\it $2000$ Mathematics
Subject Classification}: 20D05, 20D06, 20D08, 05C10, 05C45.
\\[0.1cm]
{\em Key words and phrases}: power graph, spectrum, simple group.
}

\renewcommand{\baselinestretch}{1}
\def\thefootnote{ \ }
\section{Introduction}
Given an algebraic structure $S$, there are different ways to
associate a directed or undirected graph to $S$ in such a way the
vertices are associated with families of elements or subsets of
$S$ and in which two vertices are joined by an arc or by an edge
if and only if they satisfy a certain relation. Since a graph
(directed or undirected) can be investigated in terms of the
results from Graph Theory, one can obtain some information about
the structure of $S$. In other words, we are interested in
characterizing certain properties of $S$ in terms of some
properties of the associated graph. This has been a fruitful
topic in the last years.

{\em Notation and Definitions.} We begin by introducing some
well-known graphs associated with semigroups or groups.

$\bullet$ {\em The Power graph.} Let $S$ be a semigroup and $X$ a
non-empty subset of $S$. The {\em directed power graph} on $X$,
denoted by $\vec{\mathcal{P}}(S, X)$, has $X$ as its vertex-set
and for two distinct vertices $x, y\in X$ there is an arc from
$x$ to $y$ if and only if $y=x^m$ for some positive integer $m$.
Similarly, the {\em (undirected) power graph} $\mathcal{P}(S,
X)$, where $S$ is a semigroup and $X$ a non-empty subset of $S$,
is the graph with vertex-set $X$ and such that two vertices $x,
y\in X$ are joined by an edge (and we write $x\sim y$) if $x\neq
y$ and $y=x^m$ or $x=y^m$ for some positive integer $m$. In the
case when $X=S$, we denote the directed (resp. undirected) power
graphs by $\vec{\mathcal{P}}(S)$ (resp. $\mathcal{P}(S)$). The
graphs $\vec{\mathcal{P}}(S)$ and $\mathcal{P}(S)$ have been
introduced and studied for the first time in \cite{K1} and
\cite{CSS}, respectively. Given a group $G$, we denote by
$\mathcal{P}^\ast(G)=\mathcal{P}(G, G\setminus\{1\})$ (resp.
$\vec{ \mathcal{P}}^\ast(G)=\vec{\mathcal{P}}(G,
G\setminus\{1\})$), the graph (resp. direct graph) obtained by
deleting the vertex $1$ (identity element) from $G$.

Generally, one can study the power graph $\mathcal{P}(S, X)$ for
different choices of $S$ and $X$, and from a number of different
perspectives. In particular, some other algebraic structures such
as: groups, rings, fields etc., are  interesting choices for $S$.
In recent years, the directed and undirected power graphs have
been investigated by many authors. For earlier results concerning
these graphs we refer to \cite{C1, C2, CSS, K1, K2, K3}. In
particular, in \cite{C1}, the power graph $\mathcal{P}(G)$ is
studied in the case when $G$ is an abelian group.

$\bullet$ {\em The commuting graph and its complement.} Another
graph that has attracted the attention of many authors is the
{\em commuting graph} associated with a finite group. For a finite
group $G$ and $X$ a non-empty subset of $G$, the commuting graph
on $X$ denoted $\mathcal{C}(G, X)$ has $X$ as its vertex-set with
$x, y\in X$ joined by an edge whenever $xy=yx$. Many authors have
studied $\mathcal{C}(G, X)$ for different choices of $G$ and $X$.
For instance, in the case when $X$ is a set of involutions, then
$\mathcal{C}(G, X)$ is called a commuting involution graph. In
particular, commuting involution graphs for arbitrary involution
conjugacy classes of symmetric groups were considered in
\cite{Ba}. Moreover, in the case when $G$ is a finite nonabelian
group and $X=G\setminus Z(G)$, the non-central elements of $G$,
we call $\mathcal{C}(G, X)$ the {\em commuting graph of $G$}. In
\cite{Se} and \cite{SS}, Segev and Seitz apply the commuting
graph $\Delta(G):=\mathcal{C}(G, G\setminus \{1\})$ of $G$, with
$G$ a nonabelian simple group, in order to prove the
Margulis-Platonov conjecture on arithmetic groups.

The {\em noncommuting graph} of a nonabelian group $G$, denoted
$\nabla(G)$, is defined as follows: its vertices are the
non-central elements of $G$, and two vertices are adjacent when
they do not commute. Noncommuting graphs have been investigated by
many authors (for instance, see \cite{AAM, Mszz, Mogh, Ne}).

$\bullet$ {\em The prime graph or Gruenberg-Kegel graph.} Another
graph which has deserved a lot of attention is the {\em prime
graph} or {\em Gruenberg-Kegel graph} $\Gamma(G)$ of a finite
group $G$. In this graph, the vertices are the prime numbers
dividing the order of the group $G$ and two different vertices
$p$ and $q$ are joined when $G$ possesses an element of order
$pq$. As a matter of example, for a finite nilpotent group, this
graph is complete, but for the alternating groups of degree $5$
or $6$, this graph has three isolated vertices. The first
references of the prime graph known to Gruenberg and Kegel in an
unpublished manuscript, and Williams, Kondratev, Iiyori and
Yamaki, who studied the number of connected components of the
prime graph of finite groups (see \cite{Gru, IY1, IY2, K, W1,
W2}).

{\em Graph notation.} Two vertices which are incident with a
common edge are adjacent, and two distinct adjacent vertices are
neighbours. The set of neighbours of a vertex $v$ in a graph
$\Gamma$ is denoted by $N_\Gamma(v)$. The degree (or valency) of
a vertex $v$ in a graph $\Gamma$, denoted by $d_\Gamma(v)$, is
the number of neighbours of $v$.  When the graph $\Gamma$ is clear
from the context we simply denote $N(v)$ and $d(v)$. A graph
$\Gamma$ is $k$-regular if $d(v)=k$ for all $v\in V$. A spanning
subgraph of a graph $\Gamma$ is a subgraph obtained by edge
deletions only, in other words, a subgraph whose vertex-set is
the entire vertex set of $\Gamma$. Recall that a null graph
(empty graph) is a graph without edges. A star graph consists of
one central vertex having edges to other vertices in it. An
independent set is a set of vertices in a graph, no two of which
are adjacent. A clique in a graph is a set of vertices all
pairwise adjacent. Two graphs are disjoint if they have no vertex
in common. The union of simple graphs $\Gamma_1$ and $\Gamma_2$ is
the graph $\Gamma_1\cup\Gamma_2$ with vertex set $V(\Gamma_1)\cup
V(\Gamma_2)$ and edge set $E(\Gamma_1)\cup E(\Gamma_2)$. If
$\Gamma_1$ and $\Gamma_2$ are disjoint, we refer to their union
as a disjoint union, and generally denote it by $\Gamma_1\oplus
\Gamma_2$. By starting with a disjoint union of two graphs
$\Gamma_1$ and $\Gamma_2$ and adding edges joining every vertex
of $\Gamma_1$ to every vertex of $\Gamma_2$, one obtains the join
of $\Gamma_1$ and $\Gamma_2$, denoted $\Gamma_1\vee \Gamma_2$. A
digraph is an ordered pair of sets $D=(V, A)$, where $V$ is the
set of vertices and $A$ is the set of ordered pairs (called arcs)
of vertices of $V$. Given a digraph $D=(V, A)$ and $x\in V$ we
will denote by $N_D^+(x)$ (resp. $N_D^-(x)$) the set of the
out-neighbours (resp. in-neighbours) of $x$, i.e., $\{y\in V \ |
\ (x, y)\in A\}$ (resp. $\{y\in V \ | \ (y, x)\in A\}$). The
out-degree $d_D^+(v)$ (resp. in-degree $d_D^-(v)$) of $v$ is the
number of in-neighbours (resp. out-neighbours) of $v$. In other
words, $d_D^\epsilon(v)=|N_D^\epsilon(v)|$, where
$\epsilon\in\{+, -\}$. A vertex of out-degree zero is called a
{\em sink}, one of in-degree zero a {\em source}. Note that, as
with graphs, we will drop the letter $D$ from our notation
whenever possible, thus we will write $N^+(v)$, $N^-(v)$,
$d^+(v)$ and $d^-(v)$ for $N_D^+(v)$, $N_D^-(v)$, $d_D^+(v)$ and
$d_D^-(v)$, respectively.

{\em Group Notation.} We denote by $\pi(n)$ the set of the prime
divisors of a positive integer $n$. Given a group $G$, we shall
write $\pi(G)$ instead of $\pi(|G|)$. We denote the order of an
element $x$ in $G$ by $o(x)$. A group $G$ is periodic if every
element in $G$ has finite order. The spectrum of a periodic group
$G$ is a subset $\pi_e(G)$ of the set of natural numbers
consisting of all orders of elements of $G$. This set is closed
and partially ordered by divisibility; hence, it is uniquely
determined by $\mu(G)$, a subset of its elements that are maximal
under the divisibility relation. A group is called an
$EPPO$-group if every element of the group has prime power order,
i.e., $\pi_e(G)=\{p^s, q^t, \ldots\}$, where $p^s$, $q^t$,
$\ldots$ are primes powers (see \cite{shi-yang-86}). Especially,
a group in which every non-trivial element has prime order, i.e.,
$\pi_e(G)=\{1, p, q, \ldots \}$, where $p$, $q$, $\ldots$ are
primes, is also called an $EPO$-group (see  \cite{shi-wenze}).
Moreover, a group $G$ is called an $EPMO$-group if $\pi_e(G)= \{1,
p, q, \ldots\, r, m\}$, where $p$, $q$, $\ldots$, $r$ are primes
(see \cite{Shi-Yang}), and a group $G$ is called an $EPPMO$-group
if $\pi_e(G)=\{p^s, q^t, \ldots\, r^k, m\}$, where $p^s$, $q^t$,
$\ldots$, $r^k$ are prime powers (see \cite{Huang-Shi}). Denote
by $A_m$ and $S_m$ the alternating and symmetric groups of degree
$m$, respectively. The notation $\mathbb{Z}_m^n$ means that the
direct product if $n$ copies of $\mathbb{Z}_m$ (the cyclic group
of order $m$) and $G=[N]K$ denotes the split extension of a
normal subgroup $N$ of $G$ by a complement $K$.

All further unexplained notation and terminology for graphs,
semigroups, and groups are standard and we refer the reader to
\cite{Ch, Cli, Gor, Gr, Ho, Rob, west}, for instance.

In \cite{C1}, Cameron and Ghosh asked the following question:
\begin{qu}\label{q1}
Is it true two groups with isomorphic power graphs must
themselves be isomorphic?
\end{qu}

They showed this question has an affirmative answer for finite
abelian groups \cite{C1}. In other words, a finite {\em abelian}
group is determined up to isomorphism by its power graph.
However, the answer to the Question \ref{q1} is negative in
general, as the following examples show (see \cite{C1}):

$\bullet$ {\em Infinite abelian groups}.  In fact, the power graph
of the Pr$\rm \ddot{u}$fer group $C_{p^\infty}$ is a countable
complete graph, for each prime $p$.

$\bullet$ {\em Finite groups of order $27$ and exponent $3$.}
Consider the elementary abelian group $P$ of order 27, and the
group of order 27 with the following presentation:
$$G=\langle x, y \ | \ x^3=y^3=[x,y]^3=1\rangle,$$ where $[x, y]$ is
the commutator $x^{-1}y^{-1}xy$. It is easy to see that, these
groups have isomorphic power graphs, while they are
non-isomorphic groups.

In this paper we will continue the study of the power graph
$\mathcal{P}(G)$ and $\mathcal{P}^\ast (G)$ as well. Among the
other results, we classify the finite groups whose power graphs
$\mathcal{P}(G)$ or $\mathcal{P}^\ast (G)$ has a certain graph
property.

{\em A few words about the contents.} The paper consists of seven
sections. In Section 2, we first provide some elementary results
used throughout the paper, and also we present a brief discussion
on the connectivity of the graph $\mathcal{P}^\ast (G)$ and the
relation between power graph and the other graphs. In Section 3,
we show that a finite simple group, a symmetric group and the
automorphism group of a sporadic simple group can be uniquely
determined by their power graphs among all finite groups. In the
rest of paper, we determine up to isomorphism the structure of any
finite group $G$ for which $\mathcal{P}^\ast (G)$ is strongly
regular, bipartite, planar or Eulerian, and we obtained some
infinite families of finite groups such that the graph
$\mathcal{P}^\ast(G)$ containing some cut-edges.
\section{Preliminaries}
In this section we collect some of the results that will be
needed later.
\subsection{Some Elementary Results}
We begin with the following lemma which is taken from \cite{CSS}.
\begin{lm}\label{complete-CSS}{\rm \cite[Theorem 2.12]{CSS}} \
Let $G$ be a finite group. Then  $\mathcal{P}(G)$ is complete if
and only if $G$ is a cyclic group of order $1$ or $p^m$ for some
prime number $p$ and for some natural number $m$.
\end{lm}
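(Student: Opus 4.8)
The plan is to prove both implications directly, with the forward direction carrying the real content.

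For the reverse implication, if $|G|=1$ the graph has a single vertex and is trivially complete, so assume $G$ is cyclic of order $p^m$ with $m\ge 1$. Here I would use the well-known fact that the subgroup lattice of a cyclic $p$-group is a chain: for each $i$ with $0\le i\le m$ there is a unique subgroup of order $p^i$, and these are totally ordered by inclusion. Consequently, for any two vertices $x,y$ the cyclic subgroups $\langle x\rangle$ and $\langle y\rangle$ are comparable, say $\langle x\rangle\subseteq\langle y\rangle$; then $x\in\langle y\rangle$, so $x=y^k$ for some positive integer $k$, and hence $x\sim y$. Thus $\mathcal{P}(G)$ is complete.

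For the forward implication, suppose $\mathcal{P}(G)$ is complete. I would first show that $G$ is cyclic. Pick an element $x\in G$ of maximal order and let $y\in G$ be arbitrary. By completeness, either $y=x^k$ or $x=y^k$ for some positive integer $k$. In the first case $y\in\langle x\rangle$ immediately. In the second case $o(x)=o(y^k)$ divides $o(y)$, while $o(y)\le o(x)$ by maximality of $o(x)$; hence $o(y)=o(x)$, and from the identity $o(y^k)=o(y)/\gcd(k,o(y))$ we get $\gcd(k,o(y))=1$. Then $k$ is invertible modulo $o(y)$, so $\langle y^k\rangle=\langle y\rangle$, i.e. $\langle x\rangle=\langle y\rangle$, and again $y\in\langle x\rangle$. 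In either case $y\in\langle x\rangle$, and since $y$ was arbitrary, $G=\langle x\rangle$ is cyclic. It then remains to see that $|G|$ is $1$ or a prime power: writing $|G|=n$ and supposing two distinct primes $p,q$ divide $n$, the elements $u=x^{n/p}$ and $v=x^{n/q}$ have orders $p$ and $q$, so completeness would force $u=v^k$ or $v=u^k$, hence $p\mid q$ or $q\mid p$, a contradiction; thus $n=1$ or $n=p^m$.

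I do not expect a serious obstacle here; the only delicate point is the order computation in the second case of the forward direction, where the maximality of $o(x)$ is precisely what forces $\langle x\rangle=\langle y\rangle$. (Alternatively, one could invoke Cauchy's theorem first to conclude that a complete power graph makes $G$ a $p$-group, and only then run the maximal-order argument inside the $p$-group; the version above yields both conclusions simultaneously.)
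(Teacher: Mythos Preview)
Your proof is correct, but note that the paper does not actually prove this lemma: it is quoted verbatim as \cite[Theorem~2.12]{CSS} and used as a black box throughout. So there is no ``paper's own proof'' to compare against.

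That said, your argument is a clean self-contained proof of the cited result. The reverse direction via the chain structure of the subgroup lattice of a cyclic $p$-group is standard, and your forward direction is efficient: the maximal-order element argument simultaneously forces $G$ to be cyclic and (via the coprime-order observation at the end) of prime-power order. The only step worth a second glance is the case $x=y^k$ with $o(x)$ maximal, where you correctly deduce $\gcd(k,o(y))=1$ and hence $\langle x\rangle=\langle y\rangle$; this is fine. Your alternative route (Cauchy first to get a $p$-group, then the maximal-order argument) is essentially the approach in the original source \cite{CSS}, so either version would be acceptable here.
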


We next give some elementary facts concerning power graphs which
follows from the definition.
\begin{lm}\label{elementary-results}
Let $G$ be a finite group. Then the following conditions hold.
\begin{itemize}
\item[$(a)$]If $H$ is a subgroup of $G$, then $\mathcal{P}(H)\leq
\mathcal{P}(G)$. In particular, if $x$ is a $p$-element of $G$,
where $p$ is a prime, then $\langle x\rangle$ is a clique in
$\mathcal{P}(G)$.
\item[$(b)$] If $x\in G$ and $o(x)\in \mu(G)$, then as a vertex
in the power graph $\mathcal{P}(G)$ we have $d(x)=o(x)-1$. (Note
that, in general, if $x\in G$, then $o(x)-1\leq d(x)\leq
|C_G(x)|-1$.) Therefore, the power graph $\mathcal{P}(G)$ of an
elementary $p$-group $G$ of order $p^n$ consist of
$$(p^n-1)/(p-1)=p^{n-1}+p^{n-2}+\cdots+p+1,$$ complete subgraphs on $p$ vertices sharing
a common vertex (the identity element). In particular, when $p=2$
the power graph $\mathcal{P}(G)$ is a star graph. In general, the
power graph $\mathcal{P}(G)$ of an $EPPO$-group $G$ is the union
of complete subgraphs with exactly one common vertex, i.e., the
identity element.

\item[$(c)$] If $x, y\in G$ are elements of coprime orders, or if $x,
y\in G$ are involutions, then $x\nsim y$ in the power graph
$\mathcal{P}(G)$. In particular, the set of involutions of $G$ is
an independent set in $\mathcal{P}(G)$.
\end{itemize}
\end{lm}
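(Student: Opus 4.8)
The plan is to unwind the definition of the power graph directly in each part; everything is elementary, with Lemma \ref{complete-CSS} supplying the only external input. Parts \emph{(a)} and \emph{(c)} are immediate. For \emph{(a)}, if $H\le G$ then the vertex set of $\mathcal{P}(H)$ is $H\subseteq G$, and any edge $x\sim y$ of $\mathcal{P}(H)$, arising from $y=x^m$ or $x=y^m$, is witnessed by the same identity in $G$; hence $\mathcal{P}(H)\le\mathcal{P}(G)$. When $x$ is a $p$-element, $\langle x\rangle$ is cyclic of prime-power order, so $\mathcal{P}(\langle x\rangle)$ is complete by Lemma \ref{complete-CSS}, and being a subgraph of $\mathcal{P}(G)$ on vertex set $\langle x\rangle$ it shows $\langle x\rangle$ is a clique. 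For \emph{(c)}, suppose $x,y$ are distinct non-identity elements with $x\sim y$; by symmetry say $y=x^m$, so $y\in\langle x\rangle$ and $o(y)\mid o(x)$. If $\gcd(o(x),o(y))=1$ this forces $o(y)=1$, a contradiction; and if $o(x)=o(y)=2$ then $x^m\in\{1,x\}$ gives $y\in\{1,x\}$, again a contradiction. Hence elements of coprime orders are non-adjacent, and in particular the involutions form an independent set.

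For part \emph{(b)} I would first establish the universal bounds $o(x)-1\le d(x)\le|C_G(x)|-1$. The lower bound holds because the $o(x)-1$ distinct elements of $\langle x\rangle\setminus\{x\}$ are all powers of $x$, hence neighbours of $x$. For the upper bound, if $y\sim x$ with $y\ne x$ then either $y=x^m$, so $y\in\langle x\rangle\le C_G(x)$, or $x=y^m$, so $x\in\langle y\rangle$ and thus $y$ commutes with $x$; in either case $N(x)\subseteq C_G(x)\setminus\{x\}$. Now assume $o(x)\in\mu(G)$. In the second case one gets $\langle x\rangle\le\langle y\rangle$, so $o(x)\mid o(y)$ with $o(y)\in\pi_e(G)$; maximality of $o(x)$ under divisibility forces $o(y)=o(x)$, whence $\langle x\rangle=\langle y\rangle$ and $y\in\langle x\rangle$. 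Combining the two cases, $N(x)\subseteq\langle x\rangle\setminus\{x\}$, and together with the lower bound this yields $N(x)=\langle x\rangle\setminus\{x\}$ and $d(x)=o(x)-1$.

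The stated corollaries then follow. In an elementary abelian $p$-group $G$ of order $p^n$ every non-identity element has order $p$, so by the previous computation each of the $(p^n-1)/(p-1)$ subgroups of order $p$ induces a $K_p$ (again via Lemma \ref{complete-CSS}), these subgroups meet pairwise exactly in $\{1\}$, and every edge of $\mathcal{P}(G)$ lies inside one of them; hence $\mathcal{P}(G)$ is their union identified along the vertex $1$, a star precisely when $p=2$. For an $EPPO$-group every element generates a cyclic subgroup of prime-power order, which is a clique by Lemma \ref{complete-CSS}, and since any edge $x\sim y$ satisfies $y\in\langle x\rangle$ or $x\in\langle y\rangle$ it lies inside some (maximal) cyclic subgroup; as all these cliques contain the identity, this gives the asserted description. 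I do not expect a genuine obstacle here: the lemma is a direct consequence of the definition, and the only point demanding care is the bookkeeping in \emph{(b)} showing that, when $o(x)$ is $\mu$-maximal, no ``incoming'' relation $x=y^m$ can push $N(x)$ outside $\langle x\rangle$ — which is exactly where the maximality hypothesis enters.
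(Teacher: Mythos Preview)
Your proof is correct and follows essentially the same route as the paper: part (a) unfolds the definition (the paper cites \cite[Proposition 4.5]{CSS} instead of arguing directly) and invokes Lemma~\ref{complete-CSS}; part (b) is the identical maximality argument showing any neighbour $y$ of $x$ must lie in $\langle x\rangle$; part (c) is the same divisibility contradiction. You actually give more detail than the paper on the parenthetical bound $d(x)\le|C_G(x)|-1$ and on the elementary $p$-group and $EPPO$ corollaries, which the paper dismisses with ``The rest is obvious.''
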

{\em Proof.}
\begin{itemize}
\item[$(a)$] The assertion follows immediately from
Proposition 4.5 in \cite{CSS} and Lemma \ref{complete-CSS}.

\item[$(b)$] Evidently $x\sim x^i$ for each $2\leq i\leq o(x)$, hence
$d(x)\geq o(x)-1$. Now, let $y\in G\setminus \langle x\rangle$
and $y\sim x$. By the definition of power graph we have $x\in
\langle y\rangle$ or equivalently $x=y^k$ for some $k$. But then
$o(x)|o(y)$, which implies that $o(y)=o(x)$ because $o(x)\in
\mu(G)$, and so $\langle y\rangle=\langle x\rangle$, an
impossible. Therefore $d(x)=o(x)-1$. The rest is obvious.

\item[$(c)$] Indeed, if $x\sim y$, then by the definition $x\in \langle
y\rangle$ or $y\in \langle x\rangle$, hence $o(y)|o(x)$ or
$o(x)|o(y)$, which is a contradiction. If  $x, y\in G$ are
distinct involutions, then $x\not\in \langle y\rangle$ and $y\not
\in \langle x\rangle$, hence $x\nsim y$.
\end{itemize}
The proof is complete. $\Box$

Similarly, the following simple lemma is a direct consequence of
the definition.
\begin{lm}\label{elementary-results-digraphs}
Let $G$ be a finite group and $D=\vec{\mathcal{P}}(G)$. Then the
following conditions hold.
\begin{itemize}
\item[$(a)$]If $H$ is a subgroup of $G$, then $\vec{\mathcal{P}}(H)\leq
D$.
\item[$(b)$] If $x\in G$ and $o(x)\in \mu(G)$, then as a vertex
in the direct power graph $D$ we have $$N_D^+(x)=\{1, x^2, x^3,
\ldots, x^{o(x)-1}\} \ \ \mbox{and} \ \ \ \ N_D^-(x)=\{x^n \ | \
(n, o(x))=1, n\neq 1\}.$$ In particular, we have $d_D^+(x)=o(x)-1$
and $d_D^-(x)=\phi(o(x))-1$.
\item[$(c)$] If $x\in G$ is an involution, then $d_D^+(x)=1$. In other words, every involution of $G$ is
a sink in $\vec{\mathcal{P}}^\ast(G)$.
\item[$(d)$] $D$ does not contain any source.
\item[$(e)$] If $x^n$ and $x^m$ are two generators of $\langle x\rangle$,
then $$N_D^+(x^n)\setminus\{x^m\}=N_D^+(x^m)\setminus\{x^n\} \ \
\mbox{and} \ \
N_D^-(x^n)\setminus\{x^m\}=N_D^-(x^m)\setminus\{x^n\}.$$
\end{itemize}
\end{lm}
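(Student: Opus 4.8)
The plan is to derive all five parts directly from two observations about the arc relation of $D=\vec{\mathcal{P}}(G)$: for every vertex $x$ the out-neighbourhood is exactly the set of its proper powers, $N_D^+(x)=\langle x\rangle\setminus\{x\}$, and $y\in N_D^-(x)$ holds precisely when $x$ is a power of $y$, i.e.\ when $x\in\langle y\rangle$ and $y\neq x$. Both are immediate from the definition, and the first already yields $d_D^+(x)=o(x)-1$ for \emph{every} $x$, with no restriction on $o(x)$.

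For $(a)$, the condition ``$y=x^m$'' has the same meaning whether the power is computed in $H$ or in $G$, and $H$ is closed under taking powers, so the arcs of $D$ with both endpoints in $H$ are exactly the arcs of $\vec{\mathcal{P}}(H)$; hence $\vec{\mathcal{P}}(H)$ is the subdigraph of $D$ induced on $H$. For $(b)$, the description of $N_D^+(x)$ and the value $d_D^+(x)=o(x)-1$ are the observation above; for $N_D^-(x)$ I would note that $x\in\langle y\rangle$ forces $o(x)\mid o(y)$, and since $o(x)\in\mu(G)$ is maximal for divisibility this gives $o(y)=o(x)$, so $\langle x\rangle\subseteq\langle y\rangle$ with equal finite orders yields $\langle y\rangle=\langle x\rangle$; thus $y$ runs over the $\phi(o(x))$ generators of $\langle x\rangle$, that is, $y=x^n$ with $(n,o(x))=1$, and conversely any such $x^n$ generates $\langle x\rangle\ni x$ and so sends an arc to $x$. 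Discarding $n\equiv 1$ (the case $y=x$) gives the asserted set and $d_D^-(x)=\phi(o(x))-1$. Part $(c)$ is the instance $o(x)=2$ of this: then $\langle x\rangle\setminus\{x\}=\{1\}$, so $d_D^+(x)=1$ in $D$, and deleting the vertex $1$ turns $x$ into a sink of $\vec{\mathcal{P}}^\ast(G)$.

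For $(d)$ I would show that no vertex has in-degree $0$. The vertex $1$ has $N_D^-(1)=G\setminus\{1\}$, since $1=y^{o(y)}$ for each $y\neq 1$; and if $x\neq 1$ with $o(x)\geq 3$, then $x^{-1}\neq x$ and $x=(x^{-1})^{o(x)-1}\in\langle x^{-1}\rangle$, whence $x^{-1}\in N_D^-(x)$. The remaining case, $x$ an involution, is the one I expect to be the main obstacle: here one must exhibit some $y\neq x$ with $x\in\langle y\rangle$, i.e.\ place every involution inside a cyclic subgroup of order greater than $2$ in which it occurs as a proper power (or otherwise rule out its being a source), and this is the step that demands real care. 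Finally, for $(e)$, since $x^n$ and $x^m$ both generate $\langle x\rangle$ we have $\langle x^n\rangle=\langle x^m\rangle=\langle x\rangle$, so $N_D^+(x^n)=\langle x\rangle\setminus\{x^n\}$ and $N_D^+(x^m)=\langle x\rangle\setminus\{x^m\}$, and removing $x^m$ from the first and $x^n$ from the second leaves $\langle x\rangle\setminus\{x^n,x^m\}$ in both cases; for the in-neighbourhoods, $y\in N_D^-(x^n)$ iff $x^n\in\langle y\rangle$ and $y\neq x^n$, while $x^n\in\langle y\rangle\iff\langle x\rangle\subseteq\langle y\rangle\iff x^m\in\langle y\rangle$, so $N_D^-(x^n)$ and $N_D^-(x^m)$ differ only in whether $x^n$ or $x^m$ is excluded, and deleting both from each side gives the claimed equality.
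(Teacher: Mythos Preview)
Your arguments for parts $(a)$, $(b)$, $(c)$, and $(e)$ are correct and considerably more detailed than the paper's own treatment, which consists of the single sentence ``These are straightforward verifications.'' So there is no substantive approach to compare against; you have simply supplied the verification the paper omits.

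Your hesitation about part $(d)$ is not mere caution but points at a genuine problem: the statement is false as written. Take $G=\mathbb{Z}_2$, or more generally any elementary abelian $2$-group. Every non-identity element $x$ is an involution, and for any $y\neq x$ the powers of $y$ lie in $\{1,y\}$, so $x\notin\langle y\rangle$; hence $N_D^-(x)=\emptyset$ and $x$ is a source in $D=\vec{\mathcal{P}}(G)$. More generally, an involution is a source exactly when it lies in no cyclic subgroup of order at least $3$, and such involutions abound. Your argument correctly handles the identity (in-neighbours all of $G\setminus\{1\}$) and all elements of order $\geq 3$ (via $x^{-1}\in N_D^-(x)$); the gap you flagged for involutions cannot be closed, because the claim itself fails there. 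This appears to be an oversight in the paper, glossed over by the ``straightforward'' label.
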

\begin{proof}
These are straightforward verifications.
\end{proof}

Next two results are elementary but important for further
investigations.

\begin{lm}\label{FACT} Let $a$ and $b$ be two elements of $G$ such that the order of
one of them does not divide the order of the other one and they
commute, then $a$ and $b$ lie in the same component of
$\mathcal{P}^\ast(G)$.
\end{lm}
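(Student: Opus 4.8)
The plan is to construct explicitly a short walk from $a$ to $b$ inside $\mathcal{P}^\ast(G)$ (that is, a walk avoiding the identity) by passing through the $p$-part of $a$ and the $p'$-part of $b$ for a carefully chosen prime $p$. First I would observe that, since $a$ and $b$ commute, the subgroup $\langle a,b\rangle$ is abelian, so every element built below lies in it and the relevant products are unambiguous; by Lemma~\ref{elementary-results}$(a)$ any walk produced in $\mathcal{P}(\langle a,b\rangle)$ is automatically a walk in $\mathcal{P}(G)$, so I may simply work with these elements of $G$. Put $m=o(a)$ and $n=o(b)$; the hypothesis says that neither of $m,n$ divides the other. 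Since $m\nmid n$, I can choose a prime $p$ for which the exact power $p^{\alpha}$ of $p$ dividing $m$ is strictly larger than the exact power $p^{\beta}$ of $p$ dividing $n$; note that $\alpha\geq 1$.

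Next I would let $u=a^{m/p^{\alpha}}$ be the $p$-part of $a$, of order $p^{\alpha}$, and $w=b^{p^{\beta}}$ be the $p'$-part of $b$, of order $n/p^{\beta}$, which is coprime to $p$. As $u\in\langle a\rangle$ and $w\in\langle b\rangle$, each of $u,w$ is equal or adjacent to $a$, resp.\ to $b$, in $\mathcal{P}(G)$. Clearly $u\neq 1$, since $o(u)=p^{\alpha}\geq p$. The crucial point is that $w\neq 1$ too: if $w=1$ then $n=p^{\beta}$ would be a power of $p$, whence $n=p^{\beta}\mid p^{\alpha}\mid m$, contradicting $n\nmid m$. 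Finally set $v=uw$; since $u$ and $w$ commute and have coprime orders $p^{\alpha}$ and $n/p^{\beta}$, one gets $o(v)=p^{\alpha}\cdot(n/p^{\beta})$, the cyclic group $\langle v\rangle$ contains both $\langle u\rangle$ and $\langle w\rangle$, and (because $u\neq 1\neq w$) the orders $o(u),o(w)$ are proper divisors of $o(v)$, so $u$ and $w$ are nontrivial proper powers of $v$ and hence adjacent to $v$. Also $v\neq 1$, as $o(v)\geq p$. Therefore $a,u,v,w,b$ is a walk in $\mathcal{P}(G)$ (with possible coincidences among the terms) none of whose terms equals $1$, i.e.\ a walk in $\mathcal{P}^\ast(G)$, and so $a$ and $b$ lie in the same connected component.

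The one step that is not pure bookkeeping is the verification $w\neq 1$, and this is exactly where I expect the obstacle to lie: it is essential to use the \emph{full}, two-sided non-divisibility (neither order dividing the other), not merely $m\nmid n$. Indeed, with only $m\nmid n$ the statement is false --- for instance, an element of order $p^{2}$ and an element of order $p$ lying outside the $p$-th power subgroup of $\mathbb{Z}_{p^{2}}\times\mathbb{Z}_{p}$ commute but lie in different components of $\mathcal{P}^\ast$, since every cyclic subgroup of order $p^{2}$ there meets the socle in one and the same subgroup of order $p$. Once $p$ is chosen so that $n$ has a nontrivial prime-to-$p$ part (which, as shown, is forced by $n\nmid m$), the remainder of the argument --- the orders of $u,w,v$ and the adjacencies --- is routine.
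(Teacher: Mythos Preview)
Your argument is correct. The paper's proof takes a more direct route: assuming $o(a)<o(b)$ and writing $o(b)\equiv n\pmod{o(a)}$, it simply uses the product $ab$ as the bridge, observing that $(ab)^{o(b)}=a^{n}$ and $(ab)^{o(a)}=b^{o(a)}$, which yields the walk
\[
a\ \sim\ a^{n}\ \sim\ ab\ \sim\ b^{o(a)}\ \sim\ b
\]
in $\mathcal{P}^\ast(G)$. Your construction instead isolates a prime $p$ at which $o(a)$ dominates $o(b)$, and bridges via $v=uw$ with $u$ the $p$-part of $a$ and $w$ the $p'$-part of $b$. Both produce a walk of length at most four; the paper's version is a bit slicker computationally, while yours has the advantage of making completely transparent \emph{where} the two-sided non-divisibility is consumed --- namely in guaranteeing $w\neq 1$ --- whereas in the paper's proof the corresponding use of the hypothesis (to ensure $a^{n}\neq 1$, i.e.\ $o(a)\nmid o(b)$) is left implicit. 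Your closing remark and counterexample in $\mathbb{Z}_{p^{2}}\times\mathbb{Z}_{p}$ nicely confirm that the full hypothesis is genuinely needed.
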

\begin{proof} Without loss of generality we may assume that
$o(a)<o(b)$ and $o(b)\equiv n \pmod{o(a)}$. Then, since
$$a^{n}=(ab)^{o(b)}\in \langle ab\rangle \ \ \ \ {\rm and} \ \ \ \ b^{o(a)}=(ab)^{o(a)}\in \langle ab\rangle,$$
we easily conclude that
$$a\sim a^n\sim ab \sim b^{o(a)} \sim b,$$
which shows that the vertices $a$ and $b$ belong to the same
component of $\mathcal{P}^\ast(G)$, as claimed.
\end{proof}

\begin{lm}\label{cyclic} Let $G=\langle x\rangle$ be a cyclic
group of order $n$, $D=\vec{\mathcal{P}}(G)$ and
$\Gamma=\mathcal{P}(G)$. Then the out-degree and in-degree of
$x^m\in G$ in the direct graph $D$ is given by
$$d^+_D(x^m)=o(x^m)-1=\frac{n}{(m, n)}-1 \ \ \ \mbox{and} \ \ \
d^-_D(x^m)=\sum_{d|(m,n)}\phi(\frac{n}{d}),$$ respectively.
Furthermore, the degree of $x^m\in G$ in the power graph $\Gamma$
is given by
$$d_\Gamma(x^m)=\frac{n}{(m,
n)}-1+\sum_{d|(m,n) \atop d\neq (m,n)}\phi(\frac{n}{d}).$$
\end{lm}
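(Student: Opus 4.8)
The plan is to work entirely inside the cyclic group $G = \langle x \rangle$ of order $n$ and exploit the fact that a cyclic group has a unique subgroup of each order dividing $n$. First I would compute the out-degree. In the directed power graph, $x^m \sim^+ x^k$ precisely when $x^k$ is a power of $x^m$, i.e. $x^k \in \langle x^m \rangle$; since $\langle x^m \rangle$ has order $n/(m,n) = o(x^m)$, there are exactly $o(x^m)$ such elements $x^k$, and discarding $x^m$ itself gives $d^+_D(x^m) = o(x^m) - 1 = n/(m,n) - 1$. This is the easy half and follows the same reasoning as Lemma \ref{elementary-results-digraphs}(b), except that here we do \emph{not} need $o(x^m) \in \mu(G)$ because cyclicity already forces the containment to go the right way.

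Next I would compute the in-degree. We have $x^k \sim^+ x^m$ iff $x^m \in \langle x^k \rangle$, equivalently $\langle x^m \rangle \subseteq \langle x^k \rangle$. Writing $d = (m,n)$, so that $\langle x^m \rangle$ is the unique subgroup of order $n/d$, the condition becomes: $\langle x^k \rangle$ is a subgroup containing the subgroup of order $n/d$. The subgroups of $G$ containing $\langle x^m\rangle$ are exactly those of order $n/e$ with $e \mid d$; the number of generators of the cyclic group of order $n/e$ is $\phi(n/e)$, and each generator $x^k$ of that subgroup is a distinct in-neighbour except when $x^k = x^m$ itself, which happens exactly once (for $e = d$). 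Summing the generator counts over all divisors $e$ of $d$ gives $\sum_{e \mid d} \phi(n/e)$ in-neighbours of $x^m$ \emph{including} $x^m$; but wait — I should be careful: $x^m$ is a generator of $\langle x^m\rangle$ (the $e=d$ term), and it is counted once there, so to get $N^-$ I remove it, yielding $d^-_D(x^m) = \sum_{e\mid d}\phi(n/e) - 1$. I need to reconcile this with the stated formula $\sum_{d \mid (m,n)} \phi(n/d)$: the discrepancy of $1$ must be handled by noting whether the author's convention counts the loop or whether the $e = d$, $k = m$ coincidence is the only overlap. The cleanest route is to say: the in-neighbours of $x^m$ are all $x^k$ with $\langle x^m\rangle \subsetneq \langle x^k\rangle$ \emph{or} $\langle x^k\rangle = \langle x^m\rangle$ with $k \ne m$; the first kind number $\sum_{e \mid d,\, e \ne d}\phi(n/e)$ and the second number $\phi(n/d) - 1$, and these add to $\sum_{e\mid d}\phi(n/e) - 1$. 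If the paper truly intends the formula without the $-1$, then it is implicitly using a slightly different accounting (e.g. treating the identity $1 = x^n$ as a formal in-neighbour of everything, which sits in the $e = d$ or a boundary term); I would state the count carefully and match conventions with Lemma \ref{elementary-results-digraphs}(b), where the $\phi(o(x))-1$ already reflects removing the vertex itself.

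Finally, for the undirected degree $d_\Gamma(x^m)$, I would observe that $x^k \sim x^m$ in $\mathcal{P}(G)$ iff $x^k \sim^+ x^m$ or $x^m \sim^+ x^k$ in $D$, i.e. $N_\Gamma(x^m) = N^+_D(x^m) \cup N^-_D(x^m)$. The overlap $N^+_D(x^m) \cap N^-_D(x^m)$ consists exactly of the other generators of $\langle x^m\rangle$ (those $x^k$ with $\langle x^k\rangle = \langle x^m\rangle$, $k \ne m$), of which there are $\phi(n/d) - 1$. By inclusion–exclusion,
$$
d_\Gamma(x^m) = \Big(\tfrac{n}{d} - 1\Big) + \Big(\textstyle\sum_{e\mid d}\phi(\tfrac{n}{e}) - 1\Big) - \big(\phi(\tfrac{n}{d}) - 1\big) = \tfrac{n}{d} - 1 + \sum_{e\mid d,\ e \ne d}\phi(\tfrac{n}{e}),
$$
which is exactly the claimed expression $\frac{n}{(m,n)} - 1 + \sum_{d \mid (m,n),\, d \ne (m,n)} \phi(n/d)$ (renaming the summation index). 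The main obstacle is purely bookkeeping: keeping straight whether each enumeration includes or excludes the vertex $x^m$ and the identity, and ensuring the three counts are tallied against a consistent convention so the $\pm 1$ terms cancel correctly in the inclusion–exclusion. Once the divisor-lattice description of cyclic subgroups is in hand, no deeper idea is needed.
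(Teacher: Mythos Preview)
Your approach is essentially the paper's, recast in subgroup-lattice language: where you argue via ``$\langle x^m\rangle \subseteq \langle x^k\rangle$ iff $\langle x^k\rangle$ has order $n/e$ for some $e\mid(m,n)$'', the paper argues arithmetically that $x^m\sim x^k$ iff $(k,n)\mid m$ or $(m,n)\mid k$, and then counts $k$ in each case. The two descriptions are equivalent and the counting is the same divisor sum.

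Your suspicion about the missing $-1$ is correct. The paper's in-degree count $\sum_{d\mid(m,n)}\phi(n/d)$ enumerates all $k$ with $1\le k\le n$ and $(k,n)\mid m$, and this set visibly contains $k=m$; so as stated it overcounts by one relative to the loopless convention (and indeed disagrees with Lemma~\ref{elementary-results-digraphs}(b), which gives $\phi(o(x))-1$ for a maximal element). The paper silently absorbs this extra $1$ when passing to $d_\Gamma$: it subtracts the full $\phi(o(x^m))$ for ``bidirected arcs'' rather than the $\phi(o(x^m))-1$ genuine two-way neighbours, and the two off-by-ones cancel. Your inclusion--exclusion with overlap $|N^+\cap N^-|=\phi(n/d)-1$ is the cleaner bookkeeping and reaches the same (correct) undirected formula. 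There is no missing idea in your argument; just state the in-degree as $\sum_{e\mid(m,n)}\phi(n/e)-1$ and note that the paper's version differs by the harmless self-count.
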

\begin{proof} Suppose $x^k$ is an arbitrary element in $G$. Then, by the definition we have
$$\begin{array}{lll} x^m\sim x^k & \Longleftrightarrow & x^m\in
\langle x^k\rangle \ \ \ {\rm or} \ \ \ x^k\in \langle x^m\rangle
\\[0.2cm] & \Longleftrightarrow &  x^m=(x^k)^r  \ {\rm or} \ \ x^k=(x^m)^s  \ \ {\rm for \ some} \ r, s
\\[0.2cm] & \Longleftrightarrow &  kr\equiv m\pmod{n}  \ \ {\rm or} \ \ ms\equiv k\pmod{n}
\\[0.2cm] & \Longleftrightarrow &  (k, n)|m  \ \ {\rm or} \ \ (m, n)|k.\\
\end{array}$$

First, we determine the number of elements $x^k$ such that $1\leq
k\leq n$ and $(k, n)|m$, which is equal to $d^-_D(x^m)$.  Suppose
$d_1=1, d_2, \ldots, d_t$ are common divisors of $m$ and $n$.
Now, using the fact that $(k, n)=d_i$ if and only if
$(\frac{k}{d_i}, \frac{n}{d_i})=1$, we conclude that
\begin{equation}\label{rah1}
d^-_D(x^m)=\sum_{d|(m, n)}\phi(\frac{n}{d}).
\end{equation}
Next, we determine the number of elements $x^k$ such that $1\leq
k\leq n$ and $(m, n)|k$, which is equal to $d^-_D(x^m)$. In fact,
from $(m, n)|k$ and $ms\equiv k\pmod{n}$ (for some $s$), it
follows that $s=1, 2, 3, \ldots, o(x^m)=\frac{n}{(m, n)}$.
Therefore, since $D$ is loopless, we obtain
\begin{equation}\label{rah2}
d^+_D(x^m)=o(x^m)-1=\frac{n}{(m, n)}-1.
\end{equation}
Evidently $d_\Gamma(G)\leq d^-_D(x^m)+d^+_D(x^m)$. In the sequel
to calculate the degree of $x^m$ in $\Gamma$, we have to count
the bidirected arcs in $\vec{\mathcal{P}}(\langle x^m \rangle)$
(see Figure 1). In fact, the number of such arcs is precisely
$$\phi(o(x^m))=\phi(\frac{n}{(m, n)}),$$ and so from Eqs.
(\ref{rah1}) and (\ref{rah2}) we obtain
$$\begin{array}{lll} d_{\Gamma}(x^m)&=&\Big(d^-_D(x^m)-\phi(\frac{n}{(m, n)})\Big)+d^+_D(x^m)\\[0.3cm]
&=& o(x^m)-1-\phi(\frac{n}{(m, n)})+\sum\limits_{d|(m,
n)}\phi(\frac{n}{d})\\[0.3cm]
&=& \frac{n}{(m, n)}-1+\sum\limits_{d|(m,n) \atop d\neq
(m,n)}\phi(\frac{n}{d}).\\
\end{array}$$ This completes the proof.
\end{proof}

\vspace{1.5cm} \setlength{\unitlength}{4mm}
\begin{picture}(10,8)(-3,1)
\linethickness{0.7pt} %
\put(12,5){\circle*{0.3}}%
\put(10.5,8){\circle*{0.3}}%
\put(12,11){\circle*{0.3}}%
\put(15,8){\circle*{0.3}}%
\put(18,5){\circle*{0.3}}%
\put(19.5,8){\circle*{0.3}}%
\put(18,11){\circle*{0.3}}%
\put(14.7,8.6){\small$x^m$}%
\put(15,8){\vector(1,1){2.9}}%
\put(15,8){\vector(1,0){4.4}}%
\put(15,8){\vector(1,-1){2.9}}%
\put(12,5){\vector(1,1){2.9}}%
\put(10.5,8){\vector(1,0){4.4}}%
\put(12,11){\vector(1,-1){2.9}}%
\put(15,8){\vector(2,1){3.9}}%
\put(15,8){\vector(2,-1){3.9}}%
\put(19,10){\circle*{0.3}}%
\put(19,6){\circle*{0.3}}%
\put(19.5,8){\vector(-1,0){4.4}}%
\put(18,5){\vector(-1,1){2.9}}%
\put(18.2,11.1){\small $1$}%
\put(19.4,10){\small $x^{2m}$}%
\put(19.9,7.9){\small $x^{sm}$ \ $(s, o(x^m))=1$}%
\put(19.4,5.6){\small $x^{(o(x^m)-2)m}$}%
\put(18,4){\small $x^{(o(x^m)-1)m}$}%
\put(11,11.1){\small $x^{k_1}$}%
\put(9.3,8){\small $x^{k_2}$}%
\put(10.7,4.3){\small$x^{k_d}$}%
\put(-2,1){{\small {\bf Figure 1.} \ The out-degree and in-degree
of vertex $x^m$ in $D=\vec{\mathcal{P}}(\langle x\rangle)$. Here
$d=d^-_D(x)-\phi(o(x^m))$.}}
\end{picture}
\vspace{0.5cm}

The following corollary is an immediate consequence of Lemma
\ref{cyclic}.
\begin{corollary}\label{corollary-cyclic}
All non-trivial elements of a cyclic group with the same orders
have the same degrees in its power graph.
\end{corollary}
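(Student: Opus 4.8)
The plan is to read everything off directly from the degree formula established in Lemma~\ref{cyclic}. Write $G=\langle x\rangle$ with $|G|=n$, and recall that every element of $G$ has the form $x^m$ for some $m$ with $1\le m\le n$, with order $o(x^m)=n/(m,n)$. The formula
$$d_{\mathcal{P}(G)}(x^m)=\frac{n}{(m,n)}-1+\sum_{d\mid (m,n),\ d\neq (m,n)}\phi\!\left(\frac{n}{d}\right)$$
exhibits $d_{\mathcal{P}(G)}(x^m)$ as a function of the single quantity $(m,n)$ alone: both the leading term $n/(m,n)-1$ and the index set of the summation are completely determined by $(m,n)$.

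Next I would observe that, for non-trivial $x^m,x^{m'}\in G$, having equal orders is the same as having equal greatest common divisor with $n$. Indeed, $o(x^m)=o(x^{m'})$ iff $n/(m,n)=n/(m',n)$ iff $(m,n)=(m',n)$. Combining this with the previous paragraph yields $d_{\mathcal{P}(G)}(x^m)=d_{\mathcal{P}(G)}(x^{m'})$ whenever $o(x^m)=o(x^{m'})$, which is exactly the assertion. Equivalently, one may phrase it through the standard fact that for each divisor $t$ of $n$ the elements of order $t$ in $G$ are precisely the $\phi(t)$ generators of the unique subgroup of order $t$, so all such elements are of the shape $x^m$ with the common value $(m,n)=n/t$, and hence share a common degree.

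There is essentially no obstacle here: the whole content sits inside Lemma~\ref{cyclic}, and the corollary is merely the remark that the resulting expression depends on $m$ only through $(m,n)$. The sole point deserving a word of care is the \emph{non-trivial} hypothesis, which just excludes the case $m\equiv 0\pmod n$; in fact the identity also fits the pattern (it is the unique element of order $1$ and has degree $n-1$), so the restriction is not strictly necessary, but I would retain it to match the stated form.
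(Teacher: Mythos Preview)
Your proof is correct and follows exactly the approach indicated in the paper: the corollary is stated there as an immediate consequence of Lemma~\ref{cyclic}, and you have simply made explicit why the degree formula depends on $m$ only through $(m,n)$, which in turn is determined by $o(x^m)$. There is nothing to add.
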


\begin{lm}\label{disconnected-srg}
Suppose $\Gamma$ is a strongly regular graph with parameters $(n,
k, \lambda, \mu)$. Then, the following are equivalent:

$(a)$ $\Gamma$ is disconnected.

$(b)$ $\Gamma$ is isomorphic with $tK_{k+1}$, for some $t>1$.
\end{lm}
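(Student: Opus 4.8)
The plan is to verify the easy implication and then concentrate on $(a)\Rightarrow(b)$. For $(b)\Rightarrow(a)$ there is nothing to do: if $t>1$, then $tK_{k+1}$ has $t\geq 2$ connected components, so it is disconnected.

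For $(a)\Rightarrow(b)$ I would first pin down the parameter $\mu$. Since $\Gamma$ is disconnected, it has vertices $u,v$ lying in different connected components; these are non-adjacent, and no vertex can be a common neighbour of both (such a vertex would join the two components into one), so the pair $\{u,v\}$ has no common neighbour. By the definition of a strongly regular graph this forces $\mu=0$. Next I would show that every connected component of $\Gamma$ is a clique: if some component $C$ were not complete, then $C$ would contain two vertices at distance exactly $2$ (the endpoints of a length-$2$ shortest path, which necessarily appears along a shortest path joining any two non-adjacent vertices of $C$), and the middle vertex of that path would be a common neighbour of those endpoints, contradicting $\mu=0$. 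Hence each component is complete. Finally, since every neighbour of a vertex lies in the same component as that vertex, the $k$-regularity of $\Gamma$ implies that each component is a $k$-regular complete graph, i.e. a copy of $K_{k+1}$. Writing $t$ for the number of components, we obtain $\Gamma\cong tK_{k+1}$, and $t>1$ because $\Gamma$ is disconnected.

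I do not expect a genuine obstacle here; the argument is entirely elementary. The only points that merit a word of care are that the parameter $\mu$ is actually meaningful (disconnectedness guarantees that non-adjacent pairs exist) and that $k$-regularity transfers from $\Gamma$ to each of its components, which holds because the whole (closed) neighbourhood of a vertex is contained in a single component. Everything else is bookkeeping.
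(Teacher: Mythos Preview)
Your argument is correct and entirely standard. The paper itself does not give a proof but simply refers the reader to Lemma~10.1.1 of Godsil--Royle, \emph{Algebraic Graph Theory}; your write-up supplies exactly the elementary reasoning that lies behind that citation (deduce $\mu=0$ from a pair of vertices in different components, conclude each component has diameter~$1$, then use $k$-regularity to identify every component with $K_{k+1}$). So your route is not genuinely different from the intended one, just self-contained rather than outsourced.
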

\begin{proof} See Lemma 10.1.1 in \cite{Godsil-Royle}. \end{proof}
\subsection{Power Graphs and Connectivity} For a finite group $G$,
the power graph $\mathcal{P}(G)$ is always connected, because the
identity element of $G$ is adjacent to all other vertices of
$\mathcal{P}(G)$. In this section, we will focus our attention on
the connectivity of $\mathcal{P}^\ast(G)$, the subgraph of
$\mathcal{P}(G)$ obtained by deleting the vertex 1 (the identity
element of $G$). In the next result, we provide necessary and
sufficient conditions for the subgraph $\mathcal{P}^\ast(G)$ to
be connected, when $G$ is a $p$-group for some prime $p$.
\begin{lm}\label{lemma3} Let $G$ be a finite $p$-group. Then
$\mathcal{P}^\ast(G)$ is connected if and only if $G$ has unique
minimal subgroup.
\end{lm}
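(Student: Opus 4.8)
The plan is to reduce everything to one elementary group‑theoretic fact: every nontrivial finite cyclic $p$‑group has exactly one subgroup of order $p$. For a nontrivial element $v\in G$, write $\omega(v)$ for this unique subgroup of order $p$ sitting inside the cyclic group $\langle v\rangle$; since $|\omega(v)|=p$, it is a minimal subgroup of $G$. The crucial observation is that $\omega$ is invariant along edges of $\mathcal{P}^\ast(G)$: if $u\sim v$, then by definition $u\in\langle v\rangle$ or $v\in\langle u\rangle$; say $u\in\langle v\rangle$, so $\langle u\rangle\le\langle v\rangle$. Then $\omega(u)$ is a subgroup of order $p$ of the cyclic group $\langle v\rangle$, hence $\omega(u)=\omega(v)$ by uniqueness. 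Consequently the map $v\mapsto\omega(v)$ is constant on every connected component of $\mathcal{P}^\ast(G)$.

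For the forward implication I would argue by contraposition. Suppose $G$ has two distinct minimal subgroups $M_1\ne M_2$; being subgroups of a $p$‑group of prime order they both have order $p$. Pick generators $x_1\in M_1$ and $x_2\in M_2$. Then $\omega(x_1)=M_1\ne M_2=\omega(x_2)$, so by the paragraph above $x_1$ and $x_2$ lie in different components of $\mathcal{P}^\ast(G)$, which is therefore disconnected.

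For the converse, assume $G$ has a unique minimal subgroup $M=\langle z\rangle$ with $o(z)=p$. Let $v\in G$ be any nontrivial element. Then $\omega(v)$ is a minimal subgroup of $G$, hence $\omega(v)=M$, so $z\in M=\omega(v)\le\langle v\rangle$; thus $z=v^{k}$ for some $k$, and whenever $v\ne z$ this gives an edge $v\sim z$ in $\mathcal{P}^\ast(G)$. Hence $z$ is adjacent to every other vertex, so $\mathcal{P}^\ast(G)$ is connected.

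I do not expect a real obstacle here; the only point needing a little care is the standard fact that a finite cyclic $p$‑group has a unique subgroup of order $p$, which is what makes $\omega$ well defined and what powers the edge‑invariance step. (If one wished, the whole statement could instead be phrased via the classification of finite $p$‑groups with a unique subgroup of order $p$ — the cyclic groups and, for $p=2$, the generalized quaternion groups — but that classification is not needed for the argument above.)
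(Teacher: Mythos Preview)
Your argument is correct. The converse direction is exactly the paper's: the generator $z$ of the unique minimal subgroup lies in $\langle v\rangle$ for every nontrivial $v$, so $z$ is adjacent to everything. For the forward direction, however, you take a different route from the paper. The paper argues by contradiction via a shortest path $x=x_0\sim x_1\sim\cdots\sim x_n=y$ between generators of two distinct minimal subgroups and then performs a case analysis on whether $x\in\langle x_1\rangle$ or $x_1\in\langle x\rangle$, using that $\langle x_1\rangle\setminus\{1\}$ is a clique to shortcut the path. Your approach replaces this path-shortening trick by the single observation that the map $v\mapsto\omega(v)$ (the unique order-$p$ subgroup of $\langle v\rangle$) is an edge invariant, hence constant on components; distinct minimal subgroups then automatically lie in distinct components. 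Both arguments rest on the same elementary fact about cyclic $p$-groups, but your packaging as a component invariant is cleaner and avoids the case split entirely; the paper's version, on the other hand, is slightly more hands-on and does not require naming the invariant explicitly.
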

\begin{proof} Let $G$ be a finite $p$-group with a unique minimal
subgroup of order $p$ for a prime $p$, say $\langle x\rangle$.
Now let $g\in G\setminus \{1\}$ be an arbitrary $p$-element.
Then, it is clear that $\langle x\rangle\subseteq \langle
g\rangle$, and hence $x\sim g$. This shows that
$\mathcal{P}^\ast(G)$ is connected, as required.

Conversely, assume that $G$ is a finite $p$-group such that
$\mathcal{P}^\ast(G)$ is connected. We claim that $G$ possesses a
unique minimal subgroup of order $p$. Suppose on the contrary that
there are two distinct minimal subgroups of $G$ of order $p$, say
$\langle x \rangle\neq \langle y\rangle$. Since
$\mathcal{P}^\ast(G)$ is connected, there is a path between the
vertices $x$ and $y$. Assume that $$x=x_0\sim x_1 \sim x_2 \sim
\ldots \sim x_n=y,$$ is a path with the least length from $x$ to
$y$. Certainly $n\geq 2$ and $x\nsim x_i$, for each $i=2, 3,
\ldots, n$. Since $x\sim x_1$, by the definition we have $x\in
\langle x_1\rangle$ or $x_1\in \langle x\rangle$. We only
consider the first case, and the second one goes similarly. Since
$x_1\sim x_2$, it follows that $x_1\in \langle x_2\rangle$ or
$x_2\in \langle x_1\rangle$. If $x_1\in \langle x_2\rangle$, then
$x\in \langle x_2\rangle$, which is a contradiction. Therefore
$x_2\in \langle x_1\rangle$. But then $x, x_2\in \langle
x_1\rangle$ and since $\langle x_1\rangle$ is a $p$-group, Lemma
\ref{elementary-results} $(i)$ shows that $\langle
x_1\rangle\setminus\{1\}$ is a clique in $\mathcal{P}^\ast(G)$.
Hence $x\sim x_2$, an impossible. The proof of this lemma is
complete.
\end{proof}

{\bf Remark 1.} The finite $p$-groups with a unique minimal
subgroup of order $p$ are well characterized: {\em If $G$ is a
$p$-group which has a unique minimal subgroup of order $p$, then
$G$ is either a cyclic group or a generalized quaternion group}
\cite[Theorem 5.4.10. $(ii)$]{Gor}. Notice that a generalized
quaternion group is not necessarily a 2-group.

An immediate consequence of Lemma \ref{lemma3} is the following.

\begin{corollary}\label{corollary1} Let $G$ be a finite $p$-group. Then
$\mathcal{P}^\ast(G)$ is connected if and only if $G$ is either
cyclic or generalized quaternion.
\end{corollary}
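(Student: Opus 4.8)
The plan is to obtain this as a direct corollary of Lemma \ref{lemma3} combined with the structural fact recorded in Remark 1, so the proof is short. By Lemma \ref{lemma3}, for a finite $p$-group $G$ the graph $\mathcal{P}^\ast(G)$ is connected if and only if $G$ has a unique minimal subgroup; since the minimal subgroups of a $p$-group are precisely its subgroups of order $p$, the task reduces to identifying the finite $p$-groups possessing a unique subgroup of order $p$.

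For the forward implication I would simply invoke \cite[Theorem 5.4.10$(ii)$]{Gor}, as already quoted in Remark 1: a $p$-group with a unique subgroup of order $p$ is either cyclic or generalized quaternion. Hence if $\mathcal{P}^\ast(G)$ is connected, Lemma \ref{lemma3} forces $G$ to have a unique minimal subgroup, and this theorem gives that $G$ is cyclic or generalized quaternion. For the converse I would check that each of these two families meets the hypothesis of Lemma \ref{lemma3}: a cyclic $p$-group has exactly one subgroup of each order dividing $|G|$, in particular a unique subgroup of order $p$; and a generalized quaternion group $Q_{2^n}$ has a unique involution (its centre has order $2$ and contains every element of order $2$), hence a unique subgroup of order $2$. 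In both cases Lemma \ref{lemma3} yields that $\mathcal{P}^\ast(G)$ is connected.

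There is essentially no obstacle here — the substantive work was done in Lemma \ref{lemma3} — and the only point deserving a word of care is the role of the prime: a generalized quaternion group is a $2$-group, so for odd $p$ the statement collapses to ``$\mathcal{P}^\ast(G)$ connected $\iff$ $G$ cyclic'', which is consistent with the classical fact that a $p$-group of odd order with a unique subgroup of order $p$ must be cyclic. I would therefore present the corollary as a two-line deduction, citing Lemma \ref{lemma3} and \cite[Theorem 5.4.10$(ii)$]{Gor}, and noting the trivial verification that cyclic and generalized quaternion $p$-groups do have a unique minimal subgroup.
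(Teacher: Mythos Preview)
Your proposal is correct and follows exactly the paper's approach: the paper presents this corollary as an immediate consequence of Lemma~\ref{lemma3} together with the classification recorded in Remark~1 (\cite[Theorem 5.4.10$(ii)$]{Gor}), and your argument spells out precisely this deduction. Your added remark that the generalized quaternion case only arises for $p=2$ is consistent with the paper's own observation that $Q_{4n}$ has a unique minimal subgroup if and only if $n$ is a power of $2$.
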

{\bf Some Examples.} The generalized quaternion group $Q_{4n}$, is
defined by
$$Q_{4n}=\langle x, y \ | \ x^{2n}=1, y^2=x^n, x^y=x^{-1}\rangle.$$ For $n=2$,
there is another well-known representation: $$Q_8=\{\pm 1, \ \pm
i, \  \pm j, \  \pm k \ | \ i^2=j^2=k^2=-1, ij=k, jk=i, ki=j\}.$$
We recall some elementary facts about generalized quaternion
groups below without proof, because they can be calculated in
straightforward ways.

\begin{itemize}
\item[$\bullet$] Every element $g\in Q_{4n}$ can be written uniquely as
$g=x^iy^j$ where $0\leq i\leq 2n-1$ and $0\leq j\leq 1$. In
particular, the order of $Q_{4n}$ is exactly $4n$.
\item[$\bullet$]  We have $$\mu(Q_{4n})=\left\{\begin{array}{ll} \{4, 2n\} & n \ \mbox{odd},\\[0.2cm]
\{2n\} & n \ \mbox{even}. \end{array} \right.$$ In fact, it is
easy to see that  $$o(x^iy^j)=\left\{\begin{array}{ll}
\frac{2n}{(i, 2n)} & j=0, \ 1\leq i\leq
2n,\\[0.2cm]
4 & j=1, \ 1\leq i\leq 2n. \end{array} \right.$$
\item[$\bullet$] The center of $Q_{4n}$ is $\langle x^n\rangle=\langle y^2\rangle\cong \mathbb{Z}_2.$
\item[$\bullet$] The generalized quaternion group $Q_{4n}$ has a unique
minimal subgroup if and only if $n$ is a power of $2$.
\item[$\bullet$] If $n$ is a power of $2$, then the
power graph $\mathcal{P}^\ast(Q_{4n})$ has the following form:
$$\mathcal{P}^\ast(Q_{4n})=K_1\vee (K_{2n-2}\oplus
\underbrace{K_2\oplus K_2\oplus \cdots \oplus K_2}_{n {\rm
-times}}).$$ Actually, $\mathcal{P}^\ast(Q_{4n})$ consists of a
complete graph on $2n-1$ vertices and $n$ triangles sharing a
common vertex (the unique involution). Note that, for every
element $g\in Q_{4n}\setminus \{1\}$, the subgroup $\langle g
\rangle$ contains the unique involution $x^n$, and so $x^n\sim g$
in $\mathcal{P}^\ast(Q_{4n})$. On the other hand, all vertices of
$\mathcal{P}^\ast(G)$ with degree $|G|-2$ lie in the center of
$G$, hence in $\mathcal{P}^\ast(Q_{4n})$ the unique involution
$x^n$ is the {\em only} vertex of degree $4n-2$.  For instance,
the power graph $\mathcal{P}^\ast(Q_{8})$ is depicted in Figure 2.

\vspace{1.5cm} \setlength{\unitlength}{4mm}
\begin{picture}(1,9)(5,1)
\linethickness{0.3pt} %
\put(12,8){\circle*{0.5}}%
\put(15,12){\circle*{0.5}}%
\put(19,12){\circle*{0.5}}%
\put(17,8){\circle*{0.5}}%
\put(15,12){\line(1,0){4}}%
\put(15,12){\line(1,-2){2}}%
\put(17,8){\line(1,2){2}}%
\put(17,8){\circle*{0.5}}%
\put(22,8){\circle*{0.5}}%
\put(13.7,4.8){\circle*{0.5}}%
\put(20.3,4.8){\circle*{0.5}}%
\put(12,8){\line(1,0){10}}%
\put(17,8){\line(1,-1){3.3}}%
\put(17,8){\line(-1,-1){3.3}}%
\put(12,8){\line(1,-2){1.65}}%
\put(22,8){\line(-1,-2){1.65}}%
\put(14.8,12.6){\small$i$}%
\put(18.6,12.6){\small$-i$}%
\put(17.5,8.2){\small$-1$}%
\put(10.3,7.8){\small$-k$}%
\put(13.3,3.7){\small$k$}%
\put(19.7,3.7){\small$-j$}%
\put(22.5,7.8){\small$j$}%
\put(28,4){\circle*{0.5}}%
\put(36,4){\circle*{0.5}}%
\put(32,12){\circle*{0.5}}%
\put(28,4){\line(1,2){4}}%
\put(32,12){\line(1,-2){4}}%
\put(28,4){\line(1,0){8}}%
\put(28,4){\line(2,1){6.4}}%
\put(36,4){\line(-2,1){6.4}}
\put(29.6,7.2){\circle*{0.5}}%
\put(34.5,7.2){\circle*{0.5}}%
\put(29.6,7.2){\line(1,0){4.8}}%
\put(31.8,12.5){\small$x^3$}%
\put(28.7,7.2){\small$x$}%
\put(35,7.2){\small$x^5$}%
\put(26.8,4){\small$x^2$}%
\put(36.5,4){\small$x^4$}%
\put(8,1.5){\small {\small \bf Figure 2.} \ The power graph $\mathcal{P}^\ast(Q_8)$}%
\put(25,1.5){{\small \bf Figure 3.} \ The power graph \small$\mathcal{P}^\ast(\mathbb{Z}_6)$}%
\end{picture}
\vspace{0.5cm}
\end{itemize}

We next note that, if there are some vertices in the graph
$\mathcal{P}^\ast(G)$ which are adjacent to all other vertices,
then $\mathcal{P}^\ast(G)$ is connected. In particular, if $G$ is
a finite cyclic group with $|G|>1$, then $\mathcal{P}^\ast(G)$ is
connected, because each generator of $G$ is adjacent to all other
nontrivial elements of $G$ in $\mathcal{P}^\ast(G)$. In what
follows, a necessary and sufficient condition is established for
the existence of such vertices in $\mathcal{P}^\ast(G)$. The
proof of the next lemma is similar to the proof of Proposition 4
in \cite{C2} and is included for the sake of completeness.
\begin{lm} If $G$ is a finite group, then $\mathcal{P}^\ast(G)$ contains a vertex which is joined
to all other vertices if and only if $G$ is either cyclic or
generalized quaternion.
\end{lm}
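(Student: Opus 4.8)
The plan is to prove the two implications separately. The reverse implication is immediate, while the forward one will be obtained by reducing to the case where $G$ is a $p$-group and then appealing to Corollary \ref{corollary1}.

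\emph{Sufficiency.} If $G=\langle v\rangle$ is cyclic with $|G|>1$, then every element of $G$ is a power of the generator $v$, so $v$ is adjacent in $\mathcal{P}^\ast(G)$ to every other vertex. If $G$ is generalized quaternion, then (as noted above) $G$ has a unique minimal subgroup $\langle z\rangle$, of order $2$; since every nontrivial cyclic subgroup of a finite group contains a minimal subgroup, we get $z\in\langle g\rangle$ for every $g\in G\setminus\{1\}$, and therefore $z\sim g$ for all such $g$. Thus $z$ is a vertex joined to all other vertices.

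\emph{Necessity.} Suppose $v\in G\setminus\{1\}$ is a vertex of $\mathcal{P}^\ast(G)$ adjacent to every other vertex, so that for each $g\in G\setminus\{1\}$ we have $g\in\langle v\rangle$ or $v\in\langle g\rangle$. First I would show that every element of prime order lies in $\langle v\rangle$: if $h$ has prime order $q$ and $h\notin\langle v\rangle$, then $v\in\langle h\rangle$, and since $\langle h\rangle$ has no proper nontrivial subgroup this forces $\langle v\rangle=\langle h\rangle$, whence $h\in\langle v\rangle$ --- a contradiction. Together with Cauchy's theorem this yields $\pi(G)=\pi(o(v))$. If $\langle v\rangle=G$, then $G$ is cyclic and we are done, so assume $\langle v\rangle$ is a proper subgroup of $G$. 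Since $|\langle v\rangle|$ is then a proper divisor of $|G|$ with the same set of prime divisors, there is a prime $p$ for which the exact power of $p$ dividing $|\langle v\rangle|$ is strictly smaller than that dividing $|G|$. Fixing a Sylow $p$-subgroup $P$ of $G$, it follows that $P\not\subseteq\langle v\rangle$; choosing $g\in P\setminus\langle v\rangle$, the hypothesis gives $v\in\langle g\rangle\subseteq P$, so $o(v)$ divides $|P|$ and is a power of $p$. Hence $\pi(G)=\pi(o(v))=\{p\}$, i.e.\ $G$ is a $p$-group. As $\mathcal{P}^\ast(G)$ has a vertex joined to all others, it is connected, so Corollary \ref{corollary1} gives that $G$ is cyclic or generalized quaternion, completing the proof.

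\emph{The main obstacle.} Everything here is short except the Sylow argument forcing $o(v)$ to be a prime power once $\langle v\rangle\neq G$; this is the step that reduces the problem to a $p$-group, after which Corollary \ref{corollary1} does the rest. It is worth keeping in mind that a generalized quaternion group which is also a $p$-group must be a $2$-group, which is consistent with the statement of Corollary \ref{corollary1}.
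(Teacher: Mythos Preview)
Your proof is correct and takes a somewhat different route from the paper's. Both arguments handle sufficiency the same way and both ultimately invoke Corollary~\ref{corollary1} once $G$ is known to be a $p$-group; the difference lies in how the necessity is organized. The paper splits first into ``$G$ is a $p$-group'' versus ``$G$ is not a $p$-group''; in the second case it assumes $G$ is non-cyclic and derives a contradiction by showing that every maximal cyclic $p_i$-subgroup must lie inside $\langle x\rangle$, which forces $\mu(G)=\{o(x)\}$ and is then incompatible with the existence of $y\notin\langle x\rangle$ satisfying $o(x)\mid o(y)$ and $o(x)<o(y)$. You instead split into ``$\langle v\rangle=G$'' versus ``$\langle v\rangle\neq G$'' and, in the second case, pick a single element $g$ of a Sylow $p$-subgroup lying outside $\langle v\rangle$ to force $v\in\langle g\rangle$ to be a $p$-element; combined with $\pi(G)=\pi(o(v))$ this yields that $G$ is a $p$-group, and Corollary~\ref{corollary1} finishes. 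Your Sylow reduction is a little more direct, bypassing the detour through $\mu(G)$; the paper's argument, on the other hand, extracts the additional structural fact that in the non-$p$-group case the universal vertex must have order equal to the unique maximal element order of $G$.
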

\begin{proof} We need only prove the necessity part.
(Note that, the unique involution in a generalized quaternion
$2$-group is joined to all other vertices in its power graph, and
in the case when $G$ is a cyclic group every generator of $G$ has
the same property.) In the sequel, we will consider separately
two cases: $G$ is a $p$-group or not.

{\em Case $1$.} {\em $G$ is a $p$-group.}

In this case, since $\mathcal{P}^\ast(G)$ is connected, the
conclusion follows immediately from Corollary \ref{corollary1}.

{\em Case $2$.} {\em $G$ is not a $p$-group.}

In this case, we claim that $G$ is always cyclic. Assume to the
contrary that $G$ is not a cyclic group. Let $x\in
G\setminus\{1\}$ be a vertex in $\mathcal{P}^\ast(G)$ which is
joined to all others. Clearly $\langle x\rangle$ is a proper
subgroup of $G$. Suppose $y\in G\setminus \langle x\rangle$.
Since $x\sim y$, it follows by the definition that $\langle
x\rangle\subsetneqq \langle y \rangle$, and so
\begin{equation}\label{ee1}
\ o(x)|o(y) \ \mbox{while} \ o(x)<o(y).
\end{equation}
Moreover, from part $(c)$ of Lemma \ref{elementary-results}, we
deduce that $\pi(o(x))=\pi(G)$ and so $x$ is not a $p$-element
for some prime $p\in \pi(G)$.

Suppose now that $\pi(G)=\{p_1, p_2, \ldots, p_k\}$. Let $\langle
g_i \rangle$ be a {\em maximal} cyclic $p_i$-subgroup of $G$ for
each $1\leq i\leq k$. Since $x\sim g_i$, $g_i\in \langle x\rangle$
otherwise $x$ would be a $p_i$-element of $G$, which is a
contradiction. Therefore, $o(x)$ is divisible by $o(g_i)$, for
each $1\leq i\leq k$, which shows by the maximality property of
$o(g_i)$ that $\mu(G)=\{o(x)\}$. But then $o(y)|o(x)$, which is a
contradiction from Eq. (\ref{ee1}).
\end{proof}

\begin{lm}\label{lemma4} If $G$ is a finite group such that $|\pi(Z(G))|\geq 2$, then
$\mathcal{P}^\ast(G)$ is connected.
\end{lm}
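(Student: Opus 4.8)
The plan is to use the two primes dividing $|Z(G)|$ to produce a ``hub'' that every vertex of $\mathcal{P}^\ast(G)$ can reach. Pick distinct primes $p, q \in \pi(Z(G))$. Applying Cauchy's theorem to the abelian group $Z(G)$, fix $z, w \in Z(G)$ with $o(z) = p$ and $o(w) = q$; since $z$ and $w$ commute and have coprime orders, $\zeta = zw$ is a central element with $o(\zeta) = pq$. As $\langle\zeta\rangle$ is cyclic of order $pq$ and both $z$ and $w$ are powers of $\zeta$, we get $z \sim \zeta \sim w$, so $z$ and $w$ lie in a common component $C$ of $\mathcal{P}^\ast(G)$. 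It therefore suffices to show that every $g \in G\setminus\{1\}$ lies in $C$.

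Next I would dispose of most elements directly via Lemma \ref{FACT}, using that $z$ and $w$, being central, commute with every element of $G$. Suppose first that $p \nmid o(g)$. Since $g \neq 1$, the order $o(g)$ does not divide $o(z) = p$, and $o(z)$ clearly does not divide $o(g)$; as $g$ and $z$ commute, Lemma \ref{FACT} places $g$ in the component of $z$, that is, in $C$. Symmetrically, if $q \nmid o(g)$, then $g$ lies in the component of $w$, again $C$.

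It remains to treat $g$ with $p \mid o(g)$ and $q \mid o(g)$. Here Lemma \ref{FACT} cannot be applied to $g$ and $\zeta$ directly, since $o(\zeta) = pq$ divides $o(g)$. Instead I would pass to the $p'$-part $h$ of $g$ (so $h \in \langle g\rangle$ and $o(h)$ is the largest divisor of $o(g)$ prime to $p$). Then $h \neq 1$ because $q \mid o(g)$, and $h \neq g$ because $p \mid o(g)$, so $g \sim h$ in $\mathcal{P}^\ast(G)$. Since $p \nmid o(h)$, the previous paragraph shows $h \in C$, hence $g \in C$. This covers all $g \in G\setminus\{1\}$, so $\mathcal{P}^\ast(G)$ is connected.

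The only delicate point is the last case, where the ``obvious'' move of connecting $g$ to the central element $\zeta$ of order $pq$ fails because the two orders are comparable under divisibility; the remedy is the cheap observation that replacing $g$ by its $p'$-part lands us back in a situation where Lemma \ref{FACT} applies, at the cost of one extra edge inside the cyclic group $\langle g\rangle$. Everything else is a routine invocation of Lemma \ref{FACT} together with the fact that a generator of a cyclic group is adjacent to all of its elements.
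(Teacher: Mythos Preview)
Your proof is correct and follows essentially the same strategy as the paper's: use central elements together with Lemma~\ref{FACT} to connect every nontrivial element to a fixed hub. The paper chooses a single central element $g$ of maximal order $m$ in $Z(G)$ and splits into cases according to whether $\pi(o(x))=\pi(m)$, in each case producing a prime-order power of $x$ and a prime-order power of $g$ with distinct prime orders to which Lemma~\ref{FACT} applies. You instead fix two central elements $z,w$ of distinct prime orders $p,q$ (linked via $\zeta=zw$) and split according to divisibility of $o(g)$ by $p$ and $q$, handling the case $pq\mid o(g)$ by passing to the $p'$-part of $g$. Both arguments rest on the same lemma and the same idea; your case organisation is arguably a bit tidier, while the paper's version avoids introducing the auxiliary element $\zeta$.
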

\begin{proof} Since $Z(G)$ is abelian, it follows that
$|\mu(Z(G))|=1$. Let $\mu(Z(G))=\{m\}$ and $o(g)=m$ for some $g\in
Z(G)$. In the sequel, we will show that for each element $x\in
G\setminus \{1\}$ there is a path between $x$ and $g$, which means
that $\mathcal{P}^\ast(G)$ is connected. we distinguish two cases:

{\em Case $1$.} $\pi(o(x))=\pi(m)$.

In this case, since $|\pi(m)|\geq 2$, there are two distinct
primes $p$ and $q$ in $\pi(m)$ such that $o(x^r)=p$ and
$o(g^s)=q$ for some natural numbers $r$ and $s$. Evidently $x^r$
and $g^s$ are elements of coprime order and they commute, hence
from Lemma \ref{FACT} we deduce that $x^r$ and $g^s$ lie in the
same component of $\mathcal{P}^\ast(G)$. On the other hand $x\sim
x^r$ and $g\sim g^s$, hence $x$ and $g$ lie in the same component
of $\mathcal{P}^\ast(G)$ too.

{\em Case $2$.} $\pi(o(x))\neq \pi(m)$.

In this case, there exists a prime $q\in \pi(o(x))\setminus
\pi(m)$ (or $q\in \pi(m)\setminus \pi(o(x))$). Assume that $q\neq
p\in \pi(m)$. Again there are two elements $x^r$ and $g^s$ for
some natural numbers $r$ and $s$ such that $o(x^r)=q$ and
$o(g^s)=p$. Using a similar argument as previous paragraph we see
that these elements belong to the same component of
$\mathcal{P}^\ast(G)$ and since $x\sim x^r$ and $g\sim g^s$,
hence $x$ and $g$ lie in the same connected component of
$\mathcal{P}^\ast(G)$ too.
\end{proof}

\begin{lm} Let $G$ be a finite group with $|\pi(G)|\geq 2$ and let its center is a $p$-group
for some prime $p\in \pi(G)$. Then, the power graph
$\mathcal{P}^\ast(G)$ is connected if and only if for every
non-central element $x$ of order $p$ there exists a non
$p$-element $g$ such that $x\sim g$ in $\mathcal{P}^\ast(G)$.
\end{lm}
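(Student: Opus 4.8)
The plan is to prove the two implications separately; both rest on Lemma~\ref{elementary-results}$(a)$ (the non-identity elements of a cyclic $p$-group form a clique) and on Lemma~\ref{FACT} (commuting elements whose orders are incomparable under divisibility lie in a common component of $\mathcal{P}^\ast(G)$). Throughout we use that $|\pi(G)|\geq2$ supplies a prime $q\neq p$ in $\pi(G)$, hence a non-trivial $q$-element, which is a non-$p$-element; and that $Z(G)$, being a non-trivial $p$-group, contains an element $\zeta$ of order $p$.

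\textbf{Necessity.} Suppose $\mathcal{P}^\ast(G)$ is connected and let $x$ be non-central with $o(x)=p$. Pick a non-trivial $q$-element $w$ with $q\neq p$, and a path of minimal length $x=x_{0}\sim x_{1}\sim\cdots\sim x_{n}=w$; it is a simple path, and $n\geq1$ since $x$ is a $p$-element while $w$ is not. Assume, for contradiction, that every neighbour of $x$ is a $p$-element. Then $x_{1}$ is a $p$-element, so $x_{1}\neq w$ and $n\geq2$. Since $\langle x\rangle$ has prime order, $x\sim x_{1}$ forces $\langle x\rangle\subseteq\langle x_{1}\rangle$. Now look at $x_{1}\sim x_{2}$: if $x_{1}\in\langle x_{2}\rangle$ then $\langle x\rangle\subseteq\langle x_{2}\rangle$, so $x\sim x_{2}$; if instead $x_{2}\in\langle x_{1}\rangle$ then $x$ and $x_{2}$ both lie in the cyclic $p$-group $\langle x_{1}\rangle$, so $x\sim x_{2}$ by Lemma~\ref{elementary-results}$(a)$. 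Either way $x_{0}\sim x_{2}\sim\cdots\sim x_{n}$ is a strictly shorter path from $x$ to $w$, a contradiction. Hence $x$ has a non-$p$-neighbour.

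\textbf{Sufficiency.} Assume the stated condition, and let $C$ be the connected component of $\zeta$ in $\mathcal{P}^\ast(G)$; we show $C=\mathcal{P}^\ast(G)$. Step~1: every non-$p$-element lies in $C$. If $g$ is a non-$p$-element, write $o(g)=p^{a}t$ with $p\nmid t$ and $t>1$, and set $h=g^{p^{a}}$, so $o(h)=t$ and either $h=g$ or $g\sim h$. Since $\zeta\in Z(G)$ it commutes with $h$, and $o(\zeta)=p$, $o(h)=t$ are coprime and $>1$; by Lemma~\ref{FACT}, $\zeta$ and $h$ lie in a common component, whence $g\in C$. Step~2: every $p$-element lies in $C$. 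A $p$-element $u$ is adjacent to the generator $v$ of the unique subgroup of order $p$ of $\langle u\rangle$ (or equals it), so it suffices to put every element $v$ of order $p$ into $C$. If $v$ is non-central, the hypothesis gives a non-$p$-neighbour of $v$, which lies in $C$ by Step~1, so $v\in C$. If $v$ is central, take a non-trivial $q$-element $w$ with $q\neq p$, which lies in $C$ by Step~1; then $v$ and $w$ commute and have coprime orders, so Lemma~\ref{FACT} again puts $v$ in $C$. As every non-trivial element is a $p$-element or a non-$p$-element, $\mathcal{P}^\ast(G)$ is connected.

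\textbf{Where the difficulty lies.} The necessity direction is essentially the shortest-path/shortcut argument above and is routine once one notices that a cyclic $p$-group is a clique. The real content is the sufficiency direction, and the subtle point there is the central elements of order $p$ other than $\zeta$: in general no single element is adjacent to everything, so instead of routing all vertices through one hub one pivots through an arbitrary $q$-element ($q\neq p$) via Lemma~\ref{FACT}. Note that the non-triviality of $Z(G)$ is used in an essential way --- it is what furnishes $\zeta$ and lets one anchor the whole graph to a single component. A minor bookkeeping point is that a non-$p$-element may have order divisible by several primes other than $p$; this is harmless, since Lemma~\ref{FACT} only interacts with its $p'$-part.
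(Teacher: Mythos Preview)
Your proof is correct and follows essentially the same strategy as the paper's: the shortest-path/shortcut argument for necessity (using that non-identity elements of a cyclic $p$-group form a clique), and for sufficiency routing every element through the center via Lemma~\ref{FACT}. The only cosmetic difference is that in the necessity direction you take the target vertex to be a $q$-element (which slightly streamlines the argument and avoids appealing to $Z(G)\neq 1$ there), whereas the paper routes to a central element of order $p$; in the sufficiency direction you fix a single $\zeta\in Z(G)$ and show its component is all of $G\setminus\{1\}$, while the paper shows every non-central element connects to every non-trivial central element --- these are equivalent reformulations.
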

\begin{proof} ($\Longrightarrow$) Suppose to the contrary that there
exists a non-central $p$-element $x$ of order $p$ such that for
every non $p$-element $g$, $x\nsim g$. Let $z\in Z(G)$ be an
element of order $p$. In the sequel we will show that there is no
path between $x$ and $z$. Suppose the contrary that there is a
path between $x$ and $z$, say $P$, with minimum possible length:
$$P: \ \ \ x\sim c_1\sim c_2\sim c_3\sim \cdots \sim c_k\sim z.$$
We claim that $x\sim z$. If so then we obtain $\langle x
\rangle=\langle z\rangle$, which is a contradiction because
$x\not \in Z(G)$.

Therefore, we assume that $k\geq 1$. Note that by our assumption
$c_1$ is a $p$-element of $G$. Since $x\sim c_1$, by the
definition $x\in \langle c_1\rangle$ or $c_1\in \langle
x\rangle$. We distinguish two cases separately.

{\em Case $1$.} $x\in \langle c_1\rangle$.

In this case, since $c_1\sim c_2$ and $P$ has minimum length, it
follows that $c_2\in \langle c_1\rangle$. Since $\langle
c_1\rangle$ is a $p$-group, $\mathcal{P}^\ast(\langle c_1\rangle)$
is a complete graph by Lemma \ref{complete-CSS}. Hence $x\sim
c_2$, which is a contradiction by minimality of length of $P$.

{\em Case $2$.} $c_1\in \langle x\rangle$.

This case can be proven in a similar way as previous case.

($\Longleftarrow$) Since $Z(G)$ is a proper sugroup of $G$, it is
enough to show that for each $g\in G\setminus Z(G)$, there is a
path from $g$ to each non-trivial element of $Z(G)$. Therefore,
we assume that $g\in G\setminus Z(G)$ is an arbitrary element.
Again, we will consider two cases separately.

{\em Case $1$.} $g$ is not a $p$-element.

Suppose $q\in \pi(G)\setminus \{p\}$ divides $o(g)$ and
$o(g^l)=q$ for some positive integer $l$. Then by Lemma
\ref{FACT},  there is a path between $g^l$ and every non-trivial
element in $Z(G)$, and so there is a path between $g$ and all
non-trivial elements in $Z(G)$.

{\em Case $2$.} $g$ is a $p$-element.

Suppose $o(g^l)=p$ for some positive integer $l$. Assume first
that $g^l\in Z(G)$. In this case, if $x$ is a $p'$-element of
$G$, then there is a path between $x$ and $g^l$ by Lemma
\ref{FACT}. On the other hand, $g\sim g^l$ and from Case 1 there
is a path between $x$ and every non-trivial element in $Z(G)$, as
required. Assume next that $g^l\not \in Z(G)$. Now, from our
assumption there is a non $p$-element in $G$, say $h$, such that
$g^l\sim h$. Again $g\sim g^l \sim h$ and by Case 1 there is a
path between $h$ and so $g$ and all non-trivial elements in
$Z(G)$. The proof of lemma is now complete.
\end{proof}
\subsection{The Relation Between Power Graph and the Other Graphs}
\begin{lm} Let $G$ be a finite group. If the power graph $\mathcal{P}^\ast(G)$ is
connected, then the prime graph $\Gamma(G)$ is connected too.
\end{lm}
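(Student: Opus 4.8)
The plan is to transport a path in $\mathcal{P}^\ast(G)$ between suitably chosen elements into a path in $\Gamma(G)$ between the primes dividing their orders. Fix two primes $p, q \in \pi(G)$. By Cauchy's theorem there are $x, y \in G$ with $o(x) = p$ and $o(y) = q$. Since $\mathcal{P}^\ast(G)$ is connected, choose a path $x = x_0 \sim x_1 \sim \cdots \sim x_n = y$ in it, and put $S_i = \pi(o(x_i))$ for $0 \le i \le n$. Each $x_i \neq 1$ (the identity was deleted), so every $S_i$ is nonempty, and every element of $S_i$ divides $|G|$, hence is a vertex of $\Gamma(G)$.

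First I would note that each $S_i$ is a clique of $\Gamma(G)$: if $r, s$ are distinct primes dividing $o(x_i)$, write $o(x_i) = r^{a}s^{b}m$ with $\gcd(m, rs) = 1$; then some power of $x_i^{m}$ has order $rs$, so $r \sim s$ in $\Gamma(G)$. Next, for each edge $x_i \sim x_{i+1}$ the definition of the power graph gives $x_i \in \langle x_{i+1}\rangle$ or $x_{i+1} \in \langle x_i\rangle$; in either case one of $\langle x_i\rangle$, $\langle x_{i+1}\rangle$ contains the other, so $o(x_i) \mid o(x_{i+1})$ or $o(x_{i+1}) \mid o(x_i)$, whence $S_i \subseteq S_{i+1}$ or $S_{i+1} \subseteq S_i$; in particular $S_i \cap S_{i+1} \neq \emptyset$.

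It then remains to stitch these cliques together. Choose $r_i \in S_{i-1} \cap S_i$ for $1 \le i \le n$. Since $S_0$ is a clique containing both $p$ and $r_1$, these lie in one component of $\Gamma(G)$; since $S_i$ is a clique containing $r_i$ and $r_{i+1}$, those lie in one component; and $S_n$ contains $r_n$ and $q$. Concatenating, $p$ and $q$ lie in the same connected component of $\Gamma(G)$. As $p, q \in \pi(G)$ were arbitrary, $\Gamma(G)$ is connected.

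I do not expect a real obstacle here: the proof is a direct translation, and the only points needing (minimal) care are that the intermediate primes $r_i$ are genuine vertices of $\Gamma(G)$ and that the sets $S_i$ are nonempty, both of which follow at once from the hypotheses.
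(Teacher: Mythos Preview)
Your proof is correct and follows essentially the same approach as the paper: both arguments use that adjacency in $\mathcal{P}^\ast(G)$ forces a divisibility relation between the orders, hence an inclusion between the corresponding prime sets, and then walk along a path in $\mathcal{P}^\ast(G)$ to conclude that all primes of $\pi(G)$ fall into one component of $\Gamma(G)$. Your version is simply more explicit---you invoke Cauchy's theorem to anchor the endpoints at elements of prime order and spell out the clique-stitching, whereas the paper states the inductive step more tersely.
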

\begin{proof}
Let $x$ and $y$ be two arbitrary adjacent vertices in the power
graph $\mathcal{P}^\ast(G)$. Then by the definition we have $x\in
\langle y\rangle$ or $y\in \langle x\rangle$, and so $o(x)|o(y)$
or $o(y)|o(x)$. However, this means that all primes in
$\pi(o(x))\cup \pi(o(y))$ lie in the same connected component of
the prime graph $\Gamma(G)$. Since $\mathcal{P}^\ast(G)$ is
connected we may proceed successively in this manner, eventually
showing that all primes in $$\pi(G)=\bigcup_{x\in G} \pi(o(x)),$$
lie in the same connected component of $\Gamma(G)$, and hence
$\Gamma(G)$ is connected, as required. \end{proof}
\begin{lm}\label{disconnected-srg}
Let $G$ be a group. Then $\mathcal{P}^\ast(G)$ is a spanning
subgraph of $\Delta(G)$.
\end{lm}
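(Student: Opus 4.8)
The plan is to observe first that the two graphs share the same vertex set, and then to verify the inclusion of edge sets, which is immediate from the definitions. By construction $\Delta(G)=\mathcal{C}(G,G\setminus\{1\})$ has vertex set $G\setminus\{1\}$, and $\mathcal{P}^\ast(G)=\mathcal{P}(G,G\setminus\{1\})$ has the same vertex set; hence the claim amounts to showing that every edge of $\mathcal{P}^\ast(G)$ is an edge of $\Delta(G)$.

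Next I would take an arbitrary edge $\{x,y\}$ of $\mathcal{P}^\ast(G)$. By the definition of the power graph, $x\neq y$ and either $y=x^m$ or $x=y^m$ for some positive integer $m$. In the first case $y\in\langle x\rangle$, and in the second $x\in\langle y\rangle$; in either case $x$ and $y$ lie in a common cyclic subgroup of $G$. Since cyclic groups are abelian, this forces $xy=yx$, so $x$ and $y$ are adjacent in the commuting graph $\Delta(G)$.

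Combining the two observations, $\mathcal{P}^\ast(G)$ and $\Delta(G)$ have the same vertex set and $E(\mathcal{P}^\ast(G))\subseteq E(\Delta(G))$, which is exactly the assertion that $\mathcal{P}^\ast(G)$ is a spanning subgraph of $\Delta(G)$. There is no real obstacle here: the only thing to be careful about is that ``spanning subgraph'' is being used in the sense fixed earlier in the Graph notation paragraph (a subgraph obtained by edge deletions only, i.e.\ keeping the entire vertex set), so it is worth spelling out explicitly that the vertex sets coincide rather than merely that one contains the other.
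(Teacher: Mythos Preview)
Your proof is correct and follows essentially the same route as the paper: first note that both graphs have vertex set $G\setminus\{1\}$, then observe that adjacency in $\mathcal{P}^\ast(G)$ means one vertex is a power of the other, hence the two elements commute and are adjacent in $\Delta(G)$. Your version is in fact slightly more carefully worded than the paper's, which inadvertently writes ``induced subgraph'' in its concluding sentence where ``spanning subgraph'' is meant.
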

\begin{proof}
First of all, one can easily see that both graphs
$\mathcal{P}^\ast(G)$ and $\Delta(G)$ have the same vertex set,
i.e., $G\setminus \{1\}$. Also from the definition of the power
graph, it follows that for any $x, y\in G\setminus \{1\}$, they
are adjacent in $\mathcal{P}^\ast(G)$ if $x\in \langle y\rangle$
or $y\in \langle x\rangle$, and in both cases they commute which
shows that they are adjacent in $\Delta(G)$. Thus
$\mathcal{P}^\ast(G)$ is an induced subgraph of $\Delta(G)$.
\end{proof}
\section{Recognizing Some Almost Simple Groups by Power Graph}
In \cite{C2}, Cameron showed that the power graph of a finite
group determines its spectrum.
\begin{lm}[Corollary 3, \cite{C2}]\label{l1}
Two finite groups whose power graphs are isomorphic have the same
numbers of elements of each order. In particular, they have the
same spectra.
\end{lm}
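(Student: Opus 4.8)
\noindent{\em Proof proposal.}\ The plan is to recover the directed power graph $\vec{\mathcal P}(G)$ from the undirected graph $\mathcal P(G)$, up to isomorphism. This suffices, because for {\em every} element $x$ of a finite group $G$ one has $N^{+}_{\vec{\mathcal P}(G)}(x)=\langle x\rangle\setminus\{x\}$, so $d^{+}(x)=o(x)-1$; hence the number of elements of order $n$ in $G$ equals the number of vertices of out-degree $n-1$ in $\vec{\mathcal P}(G)$, which is an isomorphism invariant of the digraph, and $\pi_{e}(G)$ is the set of those $n$ for which this number is positive. So if $\mathcal P(G_{1})\cong\mathcal P(G_{2})$, it is enough to check that each $\vec{\mathcal P}(G_{i})$ is determined by $\mathcal P(G_{i})$.

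To orient $\mathcal P(G)$ I would use the closed-twin relation $u\approx v\iff N[u]=N[v]$, a graph-theoretic invariant. First the easy direction: as in Lemma~\ref{elementary-results}, $\langle u\rangle=\langle v\rangle$ forces $u\approx v$, so the $\phi(n)$ generators of each cyclic subgroup of order $n$ lie in a single $\approx$-class. The substantial step is the converse, i.e.\ understanding the $\approx$-classes; here I would establish and use two structural facts: (i) the maximal cliques of $\mathcal P(G)$ are exactly the sets $\bigcup_{i}\mathrm{gen}(C_{i})$, where $1=C_{0}<C_{1}<\cdots<C_{m}$ ranges over the maximal chains of cyclic subgroups of $G$, so that on such a clique $\mathcal P(G)$ is complete of size $\sum_{i}\phi(|C_{i}|)$ with $C_{m}$ a maximal cyclic subgroup; and (ii) the relation $[u]\preceq[v]\iff N[u]\subseteq N[v]$ on $\approx$-classes mirrors inclusion of the corresponding cyclic subgroups. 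Combining (i), (ii) and the class sizes, one attaches to each $\approx$-class the integer $n$ it encodes: the class contains $\phi(n)$ generators of a cyclic group of order $n$, and its (at most few) remaining members are recognised as elements of smaller order sharing that closed neighbourhood. Once $o(x)$ is pinned down for every vertex — up to permutations inside a twin class, which do not change the multiset $\{o(x):x\in G\}$ — the orientation of each edge $\{u,v\}$ is forced: both directions when $o(u)=o(v)$ (equivalently $\langle u\rangle=\langle v\rangle$), and the single direction toward the vertex of smaller order otherwise. This rebuilds $\vec{\mathcal P}(G)$; equivalently, it exhibits the multiset of element orders as an invariant of $\mathcal P(G)$, which is the claim.

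The main obstacle is precisely this converse analysis of twin classes — proving (i) and (ii), and above all classifying the ``exceptional'' classes that are strictly larger than a single generator-set. These occur, for instance, when $G$ is cyclic (the identity merges with the generators of $G$); when $\mathcal P(G)$ is complete, which by Lemma~\ref{complete-CSS} forces $G$ cyclic of order $1$ or a prime power, so the order statistics are those of $\mathbb Z_{|G|}$ and are read off from $|G|$ alone; when $G$ is generalized quaternion (the unique involution is universal, cf.\ Corollary~\ref{corollary1} and the examples following it); and in further small configurations such as $D_{8}$, where a proper cyclic subgroup lies just below a maximal cyclic $p$-subgroup and merges with its generators. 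One must enumerate all such configurations and verify each by hand; the remaining bookkeeping — the inclusion–exclusion over the poset of cyclic subgroups via $\sum_{d\mid n}\phi(d)=n$ — is routine. Once this is carried out the multiset of element orders is visibly recoverable from $\mathcal P(G)$, which proves the lemma; the full argument is in \cite{C2}.
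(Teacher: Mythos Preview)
The paper does not supply a proof of this lemma at all: it is quoted verbatim as Corollary~3 of \cite{C2} and used as a black box. So there is no in-paper argument to compare your proposal against.

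What you have written is a faithful outline of Cameron's own argument in \cite{C2}: reconstruct $\vec{\mathcal P}(G)$ from $\mathcal P(G)$ by analysing the closed-twin relation $N[u]=N[v]$, identify the generic twin classes with generator sets of cyclic subgroups, and handle separately the finitely many ``exceptional'' configurations (identity merging with generators in cyclic $G$, the central involution in generalized quaternion groups, etc.). Your identification of the main difficulty --- the classification of twin classes strictly larger than a single generator set --- is exactly where the work in \cite{C2} lies; Cameron's Proposition~5 there pins these down. Since you explicitly defer the full case analysis to \cite{C2}, your proposal is really a reader's guide to that reference rather than an independent proof, which is consistent with how the present paper treats the lemma.
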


In 1987, the third author of the paper, Shi posed that every
finite simple group is uniquely determined by its spectrum and
order in the class of all groups (see \cite{S1}). In fact, this
conjecture is Question 12.39 in the Kourovka Notebook \cite{Un},
and is stated as
follows:\\[0.3cm]
{\bf  Shi's Conjecture.} {\em A finite group and a finite simple
group are isomorphic if they have the same orders
and spectra.}\\[0.3cm]
\indent The answer to Shi's Conjecture is obviously `yes' for
abelian simple groups. In a series of papers \cite{Cao, S1, S4,
S5, S3, S2, Vgm, Xu}, an affirmative answer was given for all
nonabelian simple groups. Thus, Shi's Conjecture is confirmed and
the following proposition holds.
\begin{proposition}\label{p1}
If $S$ is a finite simple group, and $G$ is a finite group with
$\pi_e(G)=\pi_e(S)$ and $|G|=|S|$, then $G \cong S$.
\end{proposition}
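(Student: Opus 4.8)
The plan is to observe that this proposition is precisely the statement of Shi's Conjecture, whose affirmative resolution is recorded in the series of papers \cite{Cao, S1, S4, S5, S3, S2, Vgm, Xu} cited just above; accordingly, the proof is an organized appeal to that body of work, together with the classification of finite simple groups. To set it up, I would fix a finite group $G$ with $\pi_e(G)=\pi_e(S)$ and $|G|=|S|$. Since $|G|=|S|$ we have $\pi(G)=\pi(S)$, so $G$ and $S$ share the same prime graph and the same set of element orders; in particular $G$ is insoluble, because a soluble group cannot simultaneously have the order and the spectrum of a nonabelian simple group (a Gruenberg--Kegel / Hall-subgroup argument). If $S$ is cyclic of prime order the claim is immediate, so assume $S$ is nonabelian.

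Next I would run the case division dictated by the classification of the possibilities for $S$: (i) $S$ is one of the $26$ sporadic simple groups, where $\mu(S)$ is listed in the ATLAS and $G$ is pinned down by a short direct argument (or by the relevant references among those cited); (ii) $S=A_n$ is an alternating group; and (iii) $S$ is a simple group of Lie type, which is the bulk of the work and is itself subdivided according to the defining characteristic and according to whether the Lie rank is small (the low-rank groups $L_2(q)$, $L_3(q)$, $U_3(q)$, $\mathrm{Sz}(q)$, ${}^2G_2(q)$, and the like, handled in the earliest papers) or large. In every case the argument has the same shape: one uses the common spectrum to detect inside $G$ a large characteristic-$p$ section (or an appropriate normal/characteristic subgroup), shows that a suitable composition factor of $G$ must be isomorphic to $S$ (or to a group very close to it), and then invokes the equality $|G|=|S|$ in a counting argument to kill the surviving alternatives and force $G=S$.

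The main --- and essentially the only --- obstacle is that there is no short, self-contained proof: Proposition \ref{p1} is an ``encyclopaedic'' theorem whose verification is spread across the cited papers and ultimately rests on the classification of finite simple groups. For the purposes of the present paper I would therefore simply invoke that literature, with \cite{S1, S4, S5} and their companions covering the sporadic groups, the alternating groups, and the low-rank groups of Lie type, and \cite{Cao, S3, S2, Vgm, Xu} covering the remaining groups of Lie type; the union of these results is exactly the assertion that every finite simple group is recognizable among all finite groups by its order together with its spectrum, which is the content of the proposition. $\Box$
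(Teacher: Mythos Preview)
Your proposal is correct and matches the paper's own treatment: the paper does not supply an independent proof of Proposition~\ref{p1} but simply records it as the confirmed Shi Conjecture, citing the series \cite{Cao, S1, S4, S5, S3, S2, Vgm, Xu} for the nonabelian simple groups (the abelian case being trivial). Your additional sketch of the case division and the shape of the arguments is extra commentary, but the substance --- an appeal to that body of literature --- is exactly what the paper does.
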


Besides the finite simple groups one can characterize some
non-solvable groups using their spectra and orders. For instance,
Bi proved the following result concerning symmetric groups (see
\cite{Bi}).
\begin{proposition}\label{p2}
Let $G$ be a finite group and $n\geq 3$ an integer. Then $G\cong
S_n$ if and only if $|G|=|S_n|$ and $\pi_e(G)=\pi_e(S_n)$.
\end{proposition}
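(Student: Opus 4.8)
\noindent\emph{Sketch of an approach.} The ``only if'' direction is immediate, since isomorphic groups share the same order and spectrum. For the converse, suppose $|G|=n!=|S_n|$ and $\pi_e(G)=\pi_e(S_n)$; equivalently, $|G|=n!$, $\mu(G)=\mu(S_n)$, and the prime graph $\Gamma(G)$ coincides with $\Gamma(S_n)$, in which $p\sim q$ if and only if $p+q\le n$ (so any prime in $\{n-1,n\}$ is an isolated vertex). The plan is to locate a normal subgroup $A\trianglelefteq G$ with $A\cong A_n$ and $[G:A]=2$; once this is in hand, $G$ is an extension of $A_n$ by a group of order $2$, and comparing the element orders of the (finitely many) such extensions with $\pi_e(S_n)$ singles out $G\cong S_n$. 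The small values of $n$ (say $n\le 6$) are handled by directly inspecting the finite list of groups of order $n!$, so I would assume $n$ large.

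\smallskip
The first substantial step is to pin down the nonabelian composition factors of $G$. Let $K$ be the solvable radical of $G$ and set $\bar G=G/K$, so that $\mathrm{Soc}(\bar G)=P_1\times\cdots\times P_k$ is a direct product of nonabelian simple groups with $\mathrm{Soc}(\bar G)\le\bar G\le\mathrm{Aut}(\mathrm{Soc}(\bar G))$. I would use the detailed shape of $\mu(S_n)$ --- its maximum (Landau's function $g(n)$, which grows faster than any polynomial in $n$), the occurrence of $n$ and $n-1$ as element orders when these are prime, the long intervals of consecutive integers contained in $\pi_e(S_n)$, and the \emph{forbidden} products $pq$ with $p+q>n$ --- to force first $k\ge 1$ (so that $G$ is not solvable: when $\Gamma(S_n)$ is disconnected this already follows from the Gruenberg--Kegel classification of solvable groups with disconnected prime graph together with an order estimate, and when $\Gamma(S_n)$ is connected one argues with Hall $\{p,q\}$-subgroups for the large primes near $n$), and then $k=1$. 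With $k=1$, the simple group $P_1$ has order dividing $n!$ and element-order set constrained by $\pi_e(S_n)$; running these two conditions against the classification of finite simple groups --- the sporadic groups being a finite check, the alternating groups $A_m$ with $m\ne n$ and the groups of Lie type being excluded on grounds of order and of element orders --- should leave only $P_1\cong A_n$. Finally $|G|=n!=2\,|A_n|$ forces $K=1$ and $[\bar G:\mathrm{Soc}(\bar G)]=2$, so $G$ is itself a degree-$2$ extension of $A_n$, and we are back to the situation of the previous paragraph.

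\smallskip
The hard part is the classification-dependent bookkeeping concealed in the identification $P_1\cong A_n$: one must exclude \emph{every} finite simple group $S$ with $\pi(S)\subseteq\pi(n!)$ other than $A_n$. The sporadic and small alternating cases are bounded, but the groups of Lie type require the exact order formulas and precise information about their spectra --- orders of maximal tori, of regular semisimple and of unipotent elements, and Zsygmondy-prime arguments --- to rule out the possibility that $|S|$ divides $n!$ while $\pi_e(S)$ is compatible with $\pi_e(S_n)$ and the leftover factor $n!/|S|$ is absorbed by $K$ and by $\bar G/\mathrm{Soc}(\bar G)$. This is precisely the technical core of \cite{Bi}, and it is also why the companion statement for the simple groups themselves is quoted above as Proposition~\ref{p1} rather than reproved; the closing passage from $A_n.2$ to $S_n$, by contrast, is a short computation using $\mathrm{Aut}(A_n)=S_n$ for $n\ne 6$ and the distinct spectra of the non-isomorphic extensions of $A_n$ by $\mathbb{Z}_2$.
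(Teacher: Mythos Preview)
The paper does not actually prove this proposition: it is stated there as a quoted result and attributed directly to Bi~\cite{Bi}, with no argument given. So there is nothing in the paper to compare your sketch against beyond the bare citation.

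That said, your outline is a fair summary of the strategy behind results of this type (and in particular of~\cite{Bi}): reduce to finding a normal $A_n$ inside $G$ via the solvable radical and the socle of $G/K$, use order and spectrum constraints together with CFSG to eliminate all other simple composition factors, and then finish by analysing the degree-$2$ extensions of $A_n$. You are also right that the real content lies in the CFSG bookkeeping, and you say so explicitly. As a \emph{proof} for inclusion in this paper, however, your text is not self-contained --- it is a plan rather than an argument, with the decisive eliminations deferred to~\cite{Bi} --- which is exactly why the paper itself simply cites the reference. If you intend this passage to replace the paper's treatment, the honest option is to do what the paper does: state the proposition and cite~\cite{Bi}.
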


In \cite{Kh, Shen}, it is proved that Shi's Conjecture is valid
for the automorphism groups of sporadic simple groups. In fact,
we have the following.
\begin{proposition}\label{p3}
Let $G$ be a finite group and $A$ the automorphism group of a
sporadic simple group. Then $G\cong A$ if and only if $|G|=|A|$
and $\pi_e(G)=\pi_e(A)$.
\end{proposition}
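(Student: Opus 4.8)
The plan is to derive the statement from Proposition~\ref{p1} (Shi's Conjecture for simple groups) together with a case analysis over the twelve sporadic simple groups whose outer automorphism group is non-trivial. Write $A=\mathrm{Aut}(S)$ for a sporadic simple group $S$. If $\mathrm{Out}(S)=1$ then $A=S$ and the assertion is exactly Proposition~\ref{p1}, so from now on assume $\mathrm{Out}(S)\cong\mathbb{Z}_2$, i.e.
$$A=S.2,\qquad S\in\{M_{12},M_{22},J_2,J_3,HS,McL,He,Suz,O'N,Fi_{22},Fi_{24}',HN\}.$$
Fix a finite group $G$ with $|G|=|A|=2|S|$ and $\pi_e(G)=\pi_e(A)$; the goal is to prove $G\cong A$. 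Since the prime graph is read off from the spectrum, $\Gamma(G)=\Gamma(A)$.

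The first step is to locate $G$ in the Gruenberg--Kegel classification. From the Atlas one checks that $\Gamma(A)$ is disconnected in each of the twelve cases (for example $11$ is an isolated vertex when $S=M_{12}$, and $\{11\},\{19\},\{31\}$ are isolated when $S=O'N$). Hence, by the Gruenberg--Kegel theorem \cite{Gru,W1,W2}, $G$ is a Frobenius group, a $2$-Frobenius group, or $G$ has a normal series $1\trianglelefteq H\trianglelefteq K\trianglelefteq G$ in which $H$ and $G/K$ are $\pi_1$-groups ($\pi_1$ the component of $\Gamma(G)$ containing $2$), $K/H$ is a non-abelian simple group, $|G/K|$ divides $|\mathrm{Out}(K/H)|$, and every component of $\Gamma(G)$ other than $\pi_1$ is a component of $\Gamma(K/H)$. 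The first two alternatives are excluded case by case: a Frobenius group has nilpotent kernel and a $2$-Frobenius group is solvable, and both have exactly two prime-graph components (Williams \cite{W1,W2}), so either $\Gamma(A)$ has three or more components, settling it at once, or a short arithmetic argument comparing $|A|$ and $\pi_e(A)$ with the possible kernel and complement orders (note that a Frobenius complement built on an isolated prime $p$ has order dividing $p-1$, and that a nilpotent kernel forces products of Sylow exponents into $\pi_e(A)$) rules it out.

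So $G$ has the stated normal series. Because $|K/H|$ divides $|G|=2|S|$ and $\Gamma(K/H)$ must carry the odd-order components of $\Gamma(A)$, one runs through the finite simple groups of order dividing $2|S|$ and, matching both the classical list of orders of finite simple groups and the prime-graph components and element orders forced by $\pi_e(A)$, concludes $K/H\cong S$. Then $|K/H|=|S|$ gives $|H|\cdot|G/K|=2$, so either $H\cong\mathbb{Z}_2$ with $G=K$, or $H=1$ with $G/K\cong\mathbb{Z}_2$. In the first case $H\le Z(G)$, so $G$ is a central extension of $S$ by $\mathbb{Z}_2$, hence $G\cong S\times\mathbb{Z}_2$ or $G\cong 2.S$ (the latter possible only if $2$ divides the Schur multiplier of $S$); in the second case $C_G(S)\in\{1,\mathbb{Z}_2\}$, where $C_G(S)\cong\mathbb{Z}_2$ again forces $G\cong S\times\mathbb{Z}_2$, while $C_G(S)=1$ embeds $G$ into $\mathrm{Aut}(S)$ with $|G|=|\mathrm{Aut}(S)|$, so $G\cong\mathrm{Aut}(S)=A$. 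The spurious options $S\times\mathbb{Z}_2$ and $2.S$ are discarded via the spectrum: $S\times\mathbb{Z}_2$ has an element of order $2n$ for every odd $n\in\pi_e(S)$, whence $\pi_e(S\times\mathbb{Z}_2)\neq\pi_e(A)$, and for each $S$ with even Schur multiplier the Atlas shows $2.S$ likewise realizes an element order not in $\pi_e(A)$. This leaves $G\cong A$.

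The main obstacle is the identification $K/H\cong S$. It relies on the full classification of finite simple groups, on the classical list of their orders (with the two genuine order-coincidences $|A_8|=|L_3(4)|$ and $|B_n(q)|=|C_n(q)|$ treated separately), and on a careful, group-by-group comparison of the prime-graph components and of $\pi_e(A)$ with the element orders of every candidate simple group whose order divides $2|S|$. Together with the companion checks that $G$ is neither Frobenius nor $2$-Frobenius and that the extensions $S\times\mathbb{Z}_2$ and $2.S$ cannot occur, this is a finite but lengthy verification carried out for each of the twelve groups; the references \cite{Kh,Shen} do precisely this.
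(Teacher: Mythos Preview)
The paper does not supply its own proof of this proposition: it is quoted as a known result, with the line ``In \cite{Kh, Shen}, it is proved that Shi's Conjecture is valid for the automorphism groups of sporadic simple groups'' immediately preceding the statement. So there is nothing in the paper to compare against beyond that citation.

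Your sketch is correct and is precisely the strategy carried out in the cited references: reduce to the case $\mathrm{Out}(S)\cong\mathbb{Z}_2$ via Proposition~\ref{p1}; use disconnectedness of $\Gamma(A)$ and the Gruenberg--Kegel theorem to obtain the normal series $1\trianglelefteq H\trianglelefteq K\trianglelefteq G$ (after excluding the Frobenius and $2$-Frobenius possibilities, which have exactly two prime-graph components); pin down $K/H\cong S$ by matching orders and the odd prime-graph components against the classification; and finally eliminate $S\times\mathbb{Z}_2$ and $2.S$ by comparing spectra with $\pi_e(A)$ from the Atlas. The only point where your write-up is looser than the published arguments is the identification step $K/H\cong S$ and the Frobenius/$2$-Frobenius exclusion in the two-component cases, both of which you correctly flag as the genuine case-by-case work done in \cite{Kh, Shen}.
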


Given a group $M$, denote by $h_{\mathcal{P}}(M)$ the number of
isomorphism classes of groups $G$ such that
$\mathcal{P}(G)\cong\mathcal{P}(M)$. A group $M$ is called
$k$-fold $\mathcal{P}$-characterizable if $h_{\mathcal{P}}(M)=k$.
Usually, a $1$-fold $\mathcal{P}$-characterizable group is simply
called $\mathcal{P}$-characterizable. We are now ready to return
to Question \ref{q1}. By combining Lemma \ref{l1} and
Propositions \ref{p1}-\ref{p3}, we obtain the following theorem.

\begin{theorem}\label{th2} Let $M$ be one of the following groups:

$(a)$ A finite simple group,

$(b)$ A symmetric group,

$(c)$ The automorphism group of a sporadic simple group.

Then, $h_{\mathcal{P}}(M)=1$, in other words, the group $M$ is
$\mathcal{P}$-characterizable.
\end{theorem}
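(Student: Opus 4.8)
The plan is to deduce Theorem \ref{th2} from the conjunction of the two ingredients assembled above: the graph-theoretic fact that a power graph determines the multiset of element orders (Lemma \ref{l1}), and the group-theoretic fact that each of the three families $M$ is characterized among \emph{all} finite groups by the pair $(|M|, \pi_e(M))$ (Propositions \ref{p1}, \ref{p2} and \ref{p3}). So the whole argument is really just a matter of chaining these together correctly, and the only genuine point to check is that the \emph{order} $|M|$ — not just the spectrum — is recoverable from the power graph.

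First I would fix $M$ in one of the three classes and let $G$ be any finite group with $\mathcal{P}(G)\cong\mathcal{P}(M)$; the goal is $G\cong M$. The number of vertices of $\mathcal{P}(G)$ equals $|G|$ by definition of the power graph, and similarly $|V(\mathcal{P}(M))| = |M|$; since the two graphs are isomorphic they have equally many vertices, hence $|G| = |M|$. Next, Lemma \ref{l1} gives that $G$ and $M$ have the same number of elements of each order, and in particular $\pi_e(G) = \pi_e(M)$. Thus $G$ is a finite group satisfying $|G| = |M|$ and $\pi_e(G) = \pi_e(M)$.

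Now I would invoke the appropriate proposition according to which family $M$ lies in. If $M$ is a finite simple group, Proposition \ref{p1} (Shi's Conjecture, now a theorem) applies directly with $S = M$ and yields $G \cong M$. If $M = S_n$ is a symmetric group, then $n\geq 3$ unless $M \in \{S_1, S_2\}$ (the two degenerate cases, which are cyclic of order $1$ and $2$ and are trivially $\mathcal{P}$-characterizable, e.g.\ by Lemma \ref{complete-CSS}); for $n\geq 3$, Proposition \ref{p2} gives $G\cong S_n$. If $M = \mathrm{Aut}(S)$ for a sporadic simple group $S$, Proposition \ref{p3} gives $G\cong M$. In every case $G\cong M$, so the isomorphism class of $M$ is the unique one with that power graph, i.e.\ $h_{\mathcal{P}}(M) = 1$ and $M$ is $\mathcal{P}$-characterizable.

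There is no real obstacle here — the hard work is entirely contained in the cited results — so the write-up is short; the only thing one must be slightly careful about is not to overclaim, namely to remember that Propositions \ref{p1}--\ref{p3} require \emph{both} the order and the spectrum (a power graph isomorphism delivers both, which is exactly why the argument closes), and to handle the trivial small symmetric groups separately so that the hypothesis $n\geq 3$ of Proposition \ref{p2} is met. One could also remark, as a sanity check, that the examples in the Introduction of order-$27$ exponent-$3$ groups with isomorphic power graphs do not contradict the theorem, since neither of those groups is simple, symmetric, or the automorphism group of a sporadic group.
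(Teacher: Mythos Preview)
Your proposal is correct and follows essentially the same route as the paper: combine Lemma~\ref{l1} (power graph determines order and spectrum) with Propositions~\ref{p1}--\ref{p3} (these groups are characterized by order and spectrum). You are in fact slightly more careful than the paper, which does not explicitly separate the vertex-count argument for $|G|=|M|$ from Lemma~\ref{l1}, and does not single out the degenerate cases $S_1,S_2$ outside the range $n\geq 3$ of Proposition~\ref{p2}.
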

\begin{proof} The proof follows in a straightforward way from
Lemma \ref{l1} and Propositions \ref{p1}, \ref{p2} and \ref{p3}.
Note that, by Lemma \ref{l1}, two groups whose power graphs are
isomorphic have, in fact, the same order and spectra. \end{proof}

It is well-known that for each positive integer $n$ there are
only {\em finitely} many non-isomorphic groups of order $n$
normally denoted by $\nu(n)$. In fact, the number of $n\times n$
arrays with entries chosen from a set of size $n$ is $n^{n^2}$.
So certainly this is an upper bound for $\nu(n)$. For another
upper bound we have $\nu(n)\leq (n!)^{\log_{2}n}$(see \cite{r},
page 109). Therefore the following result follows immediately.
\begin{theorem}\label{th1}
All finite groups are $k$-fold $\mathcal{P}$-characterizable for
some natural number $k$.
\end{theorem}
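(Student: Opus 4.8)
The plan is to reduce everything to a vertex count. The key observation is that the vertex-set of the power graph $\mathcal{P}(G)$ is literally the underlying set of $G$, so the number of vertices of $\mathcal{P}(G)$ equals $|G|$. Consequently, if $G$ is a finite group with $\mathcal{P}(G)\cong\mathcal{P}(M)$, then the two graphs have the same number of vertices and hence $|G|=|M|$. (One could also invoke Lemma \ref{l1} to get this and much more, but for the present statement the crude vertex count already suffices.)

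Next I would appeal to the finiteness fact recalled just before the statement: for each positive integer $n$ there are only finitely many pairwise non-isomorphic groups of order $n$, say $\nu(n)$ of them, with the explicit bound $\nu(n)\leq (n!)^{\log_2 n}$. Combining this with the previous paragraph, every group $G$ with $\mathcal{P}(G)\cong\mathcal{P}(M)$ lies among the $\nu(|M|)$ isomorphism classes of groups of order $|M|$, so
$$h_{\mathcal{P}}(M)\ \leq\ \nu(|M|)\ \leq\ (|M|!)^{\log_2|M|}\ <\ \infty.$$
Setting $k:=h_{\mathcal{P}}(M)$, which is now a well-defined natural number, shows that $M$ is $k$-fold $\mathcal{P}$-characterizable, as required.

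There is essentially no obstacle here: the argument is a one-line counting observation plus the classical finiteness of $\nu(n)$, and the only thing to take care of is to state clearly that $h_{\mathcal{P}}(M)$ is indeed finite (so that the defining equation $h_{\mathcal{P}}(M)=k$ can be met by some natural number $k$) rather than to try to compute $k$ in general, which is hopeless and not what the statement asks for.
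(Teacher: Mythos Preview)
Your argument is correct and is exactly the paper's approach: the paper simply records that $\nu(n)$ is finite (with the same bounds you quote) and then states that the theorem ``follows immediately,'' leaving implicit the vertex-count step you made explicit. If anything, your write-up is more detailed than the paper's one-line justification.
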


\section{The Power Graph of Finite Group as an Strongly Regular
Graph}

A {\em strongly regular graph} (henceforth SRG) with parameters
$(n, k, \lambda, \mu)$, which will be denoted by ${\rm srg} (n,
k, \lambda, \mu)$, is a regular graph on $n$ vertices of valency
$k$ such that each pair of adjacent vertices has exactly
$\lambda$ common neighbours, and each pair of non-adjacent
vertices has exactly $\mu$ common neighbours.

\begin{theorem}\label{th3}
Let $G$ be a nontrivial finite group. Then the following
conditions are equivalent.

$(a)$ $\mathcal{P}^\ast (G)$ is a strongly regular graph.

$(b)$ $G$ is a $p$-group of order $p^m$ for which $\exp(G)=p$ or
$p^m$, for some prime $p$.
\end{theorem}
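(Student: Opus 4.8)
The plan is to establish the equivalence by proving each implication separately, with the bulk of the work going into $(a)\Rightarrow(b)$. For the easy direction $(b)\Rightarrow(a)$: if $\exp(G)=p$, then by Lemma~\ref{elementary-results}$(b)$ the graph $\mathcal{P}^\ast(G)$ is a disjoint union of $(p^m-1)/(p-1)$ copies of $K_{p-1}$ (the complete subgraphs on the non-identity elements of each subgroup of order $p$), which is $tK_{p-1}$ and hence strongly regular (indeed for $p=2$ it is an empty graph on $2^m-1$ vertices, and for $p\geq 3$ it is a genuine $\mathrm{srg}$). If $\exp(G)=p^m$, then $G$ is cyclic of order $p^m$, so by Lemma~\ref{complete-CSS} the graph $\mathcal{P}(G)$ is complete and therefore $\mathcal{P}^\ast(G)=K_{p^m-1}$, which is strongly regular. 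So $(b)\Rightarrow(a)$ reduces to quoting the two structural facts already available in the excerpt.

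For $(a)\Rightarrow(b)$, suppose $\mathcal{P}^\ast(G)$ is strongly regular with parameters $(n,k,\lambda,\mu)$. First I would dispose of the case where $\mathcal{P}^\ast(G)$ is disconnected: by Lemma~\ref{disconnected-srg} it is then $tK_{k+1}$ for some $t>1$, so every connected component is a clique of the same size $k+1$. An element of prime order $p$ sits in a clique of size exactly $p-1$ coming from its own cyclic subgroup, and if some element had composite or prime-power-but-larger order, its component clique would be strictly bigger; forcing all component cliques equal, together with the fact that every nonidentity element lies in a subgroup of prime order, pins down that every element has the same prime order $p$, and $|G|$ must be a prime power (e.g.\ via Lagrange / Cauchy — an element of order $q\neq p$ cannot exist). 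This yields $\exp(G)=p$ and $G$ a $p$-group, which is case $(b)$. The connected case is the heart of the matter: here I would show $\mathcal{P}^\ast(G)$ must actually be complete. Regularity is the key lever — every vertex has the same degree $k$. Pick $x$ with $o(x)\in\mu(G)$; by Lemma~\ref{elementary-results}$(b)$, $d(x)=o(x)-1$, so $k=o(x)-1$ for \emph{every} maximal order, forcing $|\mu(G)|=1$, say $\mu(G)=\{m\}$ and $k=m-1$. Now if $p,q$ were distinct primes dividing $m$, an element $y$ of order $p$ (take $y=x^{m/p}$) would have degree at most $k=m-1$ but would need to be non-adjacent to the involution-like obstructions... more directly: an element $z$ of order $p$ has all its neighbours inside subgroups containing $z$ or contained in $\langle z\rangle$; I would count that $d(z)<m-1$ unless $\pi(m)=\{p\}$, contradicting regularity. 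Hence $m=p^a$ is a prime power, and since $\exp(G)\mid m$ forces every element to have $p$-power order, $G$ is a $p$-group with $\exp(G)=p^a$; a short argument (e.g.\ comparing $|G|$ with the clique structure, or using that a cyclic subgroup of order $p^a$ together with regularity forces $\mathcal{P}^\ast(G)$ complete, hence $G$ cyclic by Lemma~\ref{complete-CSS}) finishes with $\exp(G)=p^m$, while the only remaining alternative is $\exp(G)=p$.

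The main obstacle I anticipate is the connected case, specifically ruling out intermediate exponents: showing that a $p$-group with $p<\exp(G)<|G|$ cannot have a strongly regular $\mathcal{P}^\ast(G)$. The degree formula of Lemma~\ref{cyclic} should do the bookkeeping inside a single cyclic subgroup, but one must combine it with the global regularity constraint across \emph{all} vertices — elements of order $p$ will have degree essentially governed by how many cyclic subgroups they lie in, whereas a generator of a maximal cyclic subgroup of order $p^a$ has degree close to $p^a$, and matching these two is where the contradiction lives. I would also need to handle the $\mu$-parameter carefully (two non-adjacent vertices of coprime-ish orders having a common neighbour) to rule out non-complete connected configurations, but I expect this to follow once regularity has already collapsed $\mu(G)$ to a single prime power.
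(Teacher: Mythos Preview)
Your overall architecture (split into disconnected/connected, invoke the $tK_{k+1}$ lemma, use the degree of a maximal-order element) matches the paper. However, there are two concrete problems.

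First, a pervasive off-by-one: in $\mathcal{P}^\ast(G)$ the identity has been deleted, so for $o(x)\in\mu(G)$ one has $d(x)=o(x)-2$, not $o(x)-1$. This affects all your numerics ($k=m-2$, cliques of size $k+1=o(x)-1$, etc.).

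Second, and more seriously, your disconnected case has a genuine gap. Once you know $\mathcal{P}^\ast(G)\cong tK_{k+1}$, the components are exactly the sets $\langle x_i\rangle\setminus\{1\}$ for maximal cyclic subgroups $\langle x_i\rangle$ of common order $k+2$, and the clique condition (via Lemma~\ref{complete-CSS}) forces $k+2=p^l$ for a single prime $p$. So far so good. But your claim that ``an element of prime order $p$ sits in a clique of size exactly $p-1$'' is false: an element of order $p$ lying inside some $\langle x_i\rangle$ with $o(x_i)=p^l$ sits in the full clique of size $p^l-1$. Nothing in your argument excludes $l>1$, and this is precisely where the work lies. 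The paper handles it by picking a central element $z$ of order $p$ (possible since $G$ is a nontrivial $p$-group), say $z\in\langle x_i\rangle$; then for $j\neq i$ the product $zx_j$ lies in some third component $\langle x_k\rangle$, and $(zx_j)^p=x_j^p$ is a nontrivial element of $\langle x_j\rangle\cap\langle x_k\rangle$, contradicting disjointness. You have no substitute for this step, and you misdiagnose the ``main obstacle'' as living in the connected case --- in fact the connected case is the easier one.

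For the connected case your endgame is essentially the paper's: once $\max\pi_e(G)=p^a$, the clique $\langle x\rangle\setminus\{1\}$ of size $p^a-1$ inside a connected $(p^a-2)$-regular graph forces $\mathcal{P}^\ast(G)=K_{p^a-1}$, hence $G$ cyclic. Where you diverge is in ruling out $|\pi(m)|\ge 2$: you propose bounding the degree of an element $z$ of prime order $p$, but $d(z)$ also counts all elements $w$ with $z\in\langle w\rangle$, and there is no obvious reason this cannot reach $m-2$. The paper instead uses the $\lambda$-parameter: two generators $x,x^t$ of $\langle x\rangle$ are adjacent with exactly $\lambda=m-3$ common neighbours (namely $\langle x\rangle\setminus\{1,x,x^t\}$), but then $x$ and an element $x^r$ of order $p$ are adjacent and must also share $m-3$ common neighbours inside $N(x)=\langle x\rangle\setminus\{1,x\}$, which is impossible since $x^r$ is neither a neighbour of itself nor of an element $x^s$ of order $q\neq p$. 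Using strong regularity (not just regularity) here is what makes the argument go through cleanly.
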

\begin{proof} $(a)\Longrightarrow
(b).$ Let $G$ be a group such that $\mathcal{P}^\ast (G)$ is a
strongly regular graph with parameters $(n, k, \lambda, \mu)$. We
distinguish two cases separately according to it is connected or
disconnected.

{\bf Case 1.} {\em  $\mathcal{P}^\ast (G)$ is disconnected.}

In this case, by Lemma \ref{disconnected-srg}, we have
$$\mathcal{P}^\ast(G)\cong K_{k+1}\oplus K_{k+1}\oplus\cdots\oplus K_{k+1}=tK_{k+1},$$
for some $t>1$.

Suppose first that $x_1\in G\setminus \{1\}$ such that
$o(x_1)=\max \pi_e(G)$. Then by Lemma \ref{elementary-results}
$(b)$, $d(x_1)=o(x_1)-2$. Therefore, $o(x_1)=k+2$ and the
connected component containing $x_1$ is $\mathcal{P}^\ast(\langle
x_1\rangle)$.

Suppose next that $x_2\in G\setminus \langle x_1\rangle$ with
$o(x_2)=\max \{o(g) \ | \ g\in G\setminus \langle x_1\rangle\}$.
Again, by Lemma \ref{elementary-results} $(b)$, we conclude that
$d(x_2)=o(x_2)-2$. Hence $o(x_2)=k+2$, and the connected
component containing $x_2$ is $\mathcal{P}^\ast(\langle
x_2\rangle)$.

We continue this process until we obtain the disjoint cyclic
subgroups $\langle x_1\rangle$, $\langle x_2\rangle$, $\ldots$,
$\langle x_t\rangle$, such that
\begin{equation}\label{e1}
G=\langle x_1\rangle \cup \langle x_2\rangle \cup \dots \cup
\langle x_t\rangle.\end{equation} Evidently
$o(x_1)=o(x_2)=\ldots=o(x_t)=k+2$.

In the sequel, we show that $G$ is a $p$-group for some prime $p$.
For this purpose, it is enough to show that $|\pi(k+2)|=1$.
Assume to the contrary that $|\pi(k+2)|>1$ and let $p, q\in
\pi(k+2)$ be two distinct primes. Further we assume that
$x_i^{k_1}$ and $x_i^{k_2}$ are two vertices in the $i$th
connected component of order $p$ and $q$, respectively. Since
each connected component is clique, $x_i^{k_1}\sim x_i^{k_2}$,
which is a contradiction by Lemma \ref{elementary-results} $(c)$.

Now we show that $\exp(G)=p$. To do this, it is enough to show
that $o(x_1)=p$. Assume to the contrary that $o(x_1)=p^l$, where
$l>1$. Since $G$ is a nontrivial $p$-group, $Z(G)>1$ and so we
can choose an element of order $p$, say $z$, in $Z(G)\setminus
\{1\}$. From (\ref{e1}), it follows that $z\in \langle
x_i\rangle$ for some $i$. Since $t>1$, we can consider the
element $zx_j$, where $j\neq i$. Evidently, $zx_j\notin \langle
x_i\rangle\cup \langle x_j \rangle$. Therefore $t\geq 3$, and so
$zx_j\in \langle x_k\rangle$, where $k\neq i, j$. But then, we
obtain
$$(zx_j)^p=x_j^p\in\langle x_k\rangle\setminus \{1\},$$
since $o(x_j)=o(x_1)=p^l>p$. This means that there exists a path
between $x_j$ and $x_k$, which is a contradiction.

{\bf Case 2.} {\em  $\mathcal{P}^\ast (G)$ is connected.}

In this case, we first claim that:

\centerline {\em  \ \ \ \ \ \ \ \ \ \ \ \ ``$G$ is a cyclic group
if and only if $G$ is a $p$-group for some prime $p$." \ \ \ \ \
\ \ \ \ \ \ \ $(\dagger)$ }

First, if $G$ is a cyclic group, say $G=\langle x\rangle$, then
$o(x)=\max \pi_e(G)$ and from Lemma \ref{elementary-results}
$(b)$, we have
$$d(x)=o(x)-2=|G|-2,$$ hence $\mathcal{P}^\ast (G)$ is a
$(|G|-2)$-regular graph. Thus $\mathcal{P}^\ast (G)$ and so
$\mathcal{P}(G)$ is a complete graph and by Lemma
\ref{complete-CSS} it follows that $G$ is a $p$-group for some
prime $p$.

Conversely, we assume that $G$ is a $p$-group of order $p^m$ and
$x\in G\setminus \{1\}$ such that $o(x)=p^l\in \mu(G)$. It is
enough to show that $l=m$. Again by Lemma \ref{complete-CSS}, we
see that $\mathcal{P}(\langle x\rangle)$ and so $\mathcal{P}^\ast
(\langle x\rangle)$ is a complete graph. Hence $\mathcal{P}^\ast
(G)$  is a connected $(p^l-2)$-regular graph which includes a
clique of size $p^l-1$, this forces $G=\langle x\rangle$.

In what follows, we will show that $G$ is a $p$-group for some
prime $p$, which implies by $(\dagger)$ that $G$ is cyclic, as
required. Assume that $x\in G\setminus \{1\}$ such that
$o(x)=\max \pi_e(G)$. Then $d(x)=o(x)-2$ and so $\mathcal{P}^\ast
(G)$ is a $(o(x)-2)$-regular graph. Assume first that
$|\pi(o(x))|\geq 2$ and let $p, q\in \pi(o(x))$ be two distinct
primes. Since $o(x)>2$, the cyclic group $\langle x\rangle$ has
at least two generators, say $x$ and $x^t$ for some $t$. Actually
$x$ and $x^t$ are two adjacent vertices with $d(x)=d(x^t)$ and
each of them joint to all vertices in $\langle x\rangle\setminus
\{1, x, x^t\}$, which forces $\lambda=o(x)-3$. Suppose now that
$x^r$ and $x^s$ are two elements in $\langle x\rangle$ of order
$p$ and $q$, respectively. Evidently $x^r\nsim x^s$. On the other
hand, the vertices $x$ and $x^r$ are adjacent and hence they have
$o(x)-3$ common neighbours. However, since $N(x)=\langle
x\rangle\setminus \{1, x\}$, we conclude that $$ N(x)\cap
N(x^r)\subseteq N(x)\setminus \{x^s\}=\langle x\rangle\setminus
\{1, x, x^s\},$$ and since $|N(x)\cap N(x^r)|=o(x)-3$, we get
$N(x)\cap N(x^r)=\langle x\rangle\setminus \{1, x, x^s\}$, which
is a contradiction because $x^r\notin N(x^r)$.

Next, suppose that $|\pi(o(x))|=1$. In this case, $\langle x
\rangle$ is a $p$-group for some prime $p$, and $\mathcal{P}^\ast
(G)$ is $(o(x)-2)$-regular. Moreover, by Lemma \ref{complete-CSS},
$\langle x\rangle$  is a clique in $\mathcal{P}^\ast(G)$, which
forces $G=\langle x\rangle$. Therefore $G$ is a $p$-group, and
the proof is complete.

$(b)\Longrightarrow (a)$. The proof is straightforward.
\end{proof}

\section{The Power Graph Which is Bipartite or Planar}
An {\em independent set} in a graph is a set of pairwise
nonadjacent vertices. A graph $\Gamma$ is called {\em bipartite}
if whose vertex set can be partitioned into two independent sets
called partite sets. In the following result we recognize the
groups $G$ for which the graph $\mathcal{P}^\ast (G)$ is
bipartite.
\begin{theorem}\label{2partite}
Let $G$ be a nontrivial finite group. Then the following
conditions are equivalent.

$(a)$ $\mathcal{P}^\ast (G)$ is a bipartite graph.

$(b)$ $\pi_e(G)\subseteq \{1, 2, 3\}$.
\end{theorem}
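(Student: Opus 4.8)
The plan is to prove the two implications separately; almost all of the content is in $(a)\Rightarrow(b)$, which I would establish by contraposition.

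For $(a)\Rightarrow(b)$, suppose $\pi_e(G)\not\subseteq\{1,2,3\}$, so $G$ contains an element $x$ with $n:=o(x)\geq 4$. I would work inside the cyclic subgroup $\langle x\rangle$, using Lemma \ref{elementary-results}$(a)$, which gives $\mathcal{P}(\langle x\rangle)\leq\mathcal{P}(G)$ and hence $\mathcal{P}^\ast(\langle x\rangle)\leq\mathcal{P}^\ast(G)$. The three elements $x$, $x^{2}$, $x^{n-1}$ are pairwise distinct and all nontrivial (here one uses $n\geq 4$: e.g. $x^{n-1}=x^{2}$ would force $n\mid 3$). Both $x$ and $x^{n-1}=x^{-1}$ are generators of $\langle x\rangle$, so each is joined in $\mathcal{P}^\ast(\langle x\rangle)$ to every other nontrivial element of $\langle x\rangle$; together with $x\sim x^{2}$ this shows $\{x,x^{2},x^{n-1}\}$ is a triangle in $\mathcal{P}^\ast(G)$. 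A bipartite graph contains no odd cycle, so $\mathcal{P}^\ast(G)$ is not bipartite.

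For $(b)\Rightarrow(a)$, assume $\pi_e(G)\subseteq\{1,2,3\}$; I would simply read off the isomorphism type of $\mathcal{P}^\ast(G)$. Every nontrivial element has order $2$ or $3$. By Lemma \ref{elementary-results}$(c)$ any two involutions are non-adjacent, and an involution is non-adjacent to an element of order $3$ since their orders are coprime. Finally, two distinct elements $u,v$ of order $3$ are adjacent if and only if $u\in\langle v\rangle$ or $v\in\langle u\rangle$, i.e. $\langle u\rangle=\langle v\rangle$, i.e. $v\in\{u,u^{2}\}$. Hence $\mathcal{P}^\ast(G)$ is the disjoint union of isolated vertices (the involutions, if any) and copies of $K_{2}$, one for each subgroup of order $3$, with vertex set $\{g,g^{2}\}$. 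Colouring one endpoint of each such $K_{2}$ together with every isolated vertex by colour $1$, and the other endpoints by colour $2$, exhibits $\mathcal{P}^\ast(G)$ as bipartite. (The edgeless cases, e.g. $G=\mathbb{Z}_{2}$, are covered with zero copies of $K_{2}$.)

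There is no genuinely hard step here. The only point requiring care is in $(b)\Rightarrow(a)$: one must check that the list of adjacencies above is exhaustive, so that no extra edges hide among the order-$3$ elements or between the two types; this is immediate from the definition of the power graph and Lemma \ref{elementary-results}$(c)$. The direction $(a)\Rightarrow(b)$ is reduced, uniformly for any hypothetical element of order $\geq 4$, to exhibiting the single explicit triangle $\{x,x^{2},x^{-1}\}$ inside a cyclic subgroup.
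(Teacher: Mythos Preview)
Your proof is correct and follows essentially the same approach as the paper's. Both directions proceed identically in spirit: for $(a)\Rightarrow(b)$ the paper also exhibits a triangle on two generators of $\langle x\rangle$ and one further power (you make the specific choice $x$, $x^{-1}$, $x^{2}$ where the paper writes $x$, $x^{t}$, $x^{j}$), and for $(b)\Rightarrow(a)$ both arguments identify $\mathcal{P}^\ast(G)$ as a disjoint union of isolated vertices and copies of $K_{2}$, hence acyclic and bipartite.
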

\begin{proof} $(a) \Longrightarrow (b)$.
Let $\mathcal{P}^\ast (G)$ be a bipartite graph. Clearly, if $n\in
\pi_e(G)$, then $G$ always has at least $\phi(n)$ elements of
order $n$, where $\phi(n)$  signifies the Euler's totient
function. Suppose now that $G$ contains an element of order $\geq
4$, say $x$. Certainly $\phi(o(x))>1$, so the cyclic group
$\langle x\rangle$ has at least two generators, say $x$ and
$x^t$. Now, we have the $3$-cycle $$x\sim x^j\sim x^t\sim x,$$
where $j\neq 1, t$. This shows that $\mathcal{P}^\ast (G)$ cannot
be a bipartite graph, which is a contradiction.

$(b) \Longrightarrow (a)$. Let $G$ be a nontrivial finite group
such that $\pi_e(G)\subseteq \{1, 2, 3\}$. In the case when
$\pi_e(G)= \{1, 2\}$, $G$ is an elementary abelian $2$-group and
by the definition of power graph, it is easy to see that the
graph $\mathcal{P}^\ast (G)$ consists of only isolated vertices
and hence it can be considered as a bipartite graph. On the other
hand, if $3\in \pi_e(G)\subseteq \{1, 2, 3\}$, then $3=\max
\pi_e(G)$ and for all elements of order $3$, say $x$, we have the
singleton edge $x\sim x^2$ and so $d(x)=1$. In fact, the power
graph $\mathcal{P}^\ast (G)$ is isomorphic to a graph as in the
following:
$$\mathcal{P}^\ast (G)\cong \underbrace{K_1\oplus K_1\oplus
\cdots \oplus K_1}_{\rm elements \ of \ order \ 2}\oplus
\underbrace{K_2\oplus K_2\oplus \cdots\oplus K_2}_{\rm elements \
of \ order \ 3},$$ consequently it contains no cycles and so
$\mathcal{P}^\ast (G)$ is bipartite, as desired.
\end{proof}

An immediate consequence of Theorem \ref{2partite} is the
following.

\begin{corollary}
Let $G$ be a nontrivial finite group. Then $\mathcal{P}^\ast (G)$
is a tree if and only if $G=\mathbb{Z}_2$ or $\mathbb{Z}_3$.
\end{corollary}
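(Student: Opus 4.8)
The plan is to leverage Theorem~\ref{2partite} together with the basic structural observations already collected in the excerpt. A tree is in particular a connected bipartite graph, so if $\mathcal{P}^\ast(G)$ is a tree then $\pi_e(G)\subseteq\{1,2,3\}$ by Theorem~\ref{2partite}. The explicit description of $\mathcal{P}^\ast(G)$ given in the proof of that theorem then pins things down: when $3\in\pi_e(G)$ the graph is a disjoint union of isolated vertices (the involutions) and copies of $K_2$ (the order-$3$ subgroups minus the identity), and when $\pi_e(G)=\{1,2\}$ the graph is a null graph on $|G|-1$ vertices. In either case the graph is disconnected unless it has at most one vertex or is a single $K_2$ with no extra isolated vertices. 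So I would argue: a tree is connected, hence $\mathcal{P}^\ast(G)$ must be either a single vertex or a single edge $K_2$ (the only connected graphs appearing in that disjoint-union description), and then read off the group.

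First I would dispose of the trivial directions: if $G=\mathbb{Z}_2$ then $\mathcal{P}^\ast(G)$ is a single vertex, which is (vacuously) a tree; if $G=\mathbb{Z}_3$ then $\mathcal{P}^\ast(G)=K_2$, a tree. For the forward direction, assume $\mathcal{P}^\ast(G)$ is a tree. It is connected and acyclic, in particular bipartite, so Theorem~\ref{2partite} gives $\pi_e(G)\subseteq\{1,2,3\}$. Now invoke the structural description from the proof of Theorem~\ref{2partite}: $\mathcal{P}^\ast(G)$ is a disjoint union of $K_1$'s (one for each involution) and $K_2$'s (one for each cyclic subgroup of order $3$). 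Connectivity forces this disjoint union to have exactly one piece, so either $\mathcal{P}^\ast(G)$ is a single $K_1$, forcing $|G|=2$ and hence $G=\mathbb{Z}_2$, or $\mathcal{P}^\ast(G)$ is a single $K_2$, forcing $G$ to have exactly two non-identity elements, both of order $3$, hence $|G|=3$ and $G=\mathbb{Z}_3$. (If $\pi_e(G)=\{1,2\}$ the graph is a null graph on $|G|-1\ge 1$ vertices, which is connected only when $|G|=2$, already covered.)

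The only point requiring a little care is making sure no element of order $2$ and order $3$ can coexist while keeping the graph connected: by Lemma~\ref{elementary-results}$(c)$ elements of coprime order are never adjacent, so an involution would always sit in its own component, contradicting connectivity as soon as there is anything else in the graph. This is essentially the main (and only mild) obstacle, and it is immediate from the lemma. Everything else is bookkeeping on the explicit form of $\mathcal{P}^\ast(G)$ established in Theorem~\ref{2partite}, so the proof is short.

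\begin{proof}
If $G=\mathbb{Z}_2$ then $\mathcal{P}^\ast(G)$ is a single vertex, and if $G=\mathbb{Z}_3$ then $\mathcal{P}^\ast(G)\cong K_2$; in both cases the graph is a tree. Conversely, suppose $\mathcal{P}^\ast(G)$ is a tree. Then it is connected and acyclic, hence in particular bipartite, so by Theorem~\ref{2partite} we have $\pi_e(G)\subseteq\{1,2,3\}$. By the description obtained in the proof of Theorem~\ref{2partite},
$$\mathcal{P}^\ast(G)\cong \underbrace{K_1\oplus\cdots\oplus K_1}_{\text{involutions}}\oplus \underbrace{K_2\oplus\cdots\oplus K_2}_{\text{subgroups of order }3};$$
indeed, by Lemma~\ref{elementary-results}$(c)$ distinct involutions form an independent set and no involution is adjacent to an element of order $3$, while each cyclic subgroup of order $3$ contributes a $K_2$. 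Since $\mathcal{P}^\ast(G)$ is connected, this disjoint union has exactly one summand. If that summand is $K_1$, then $|G\setminus\{1\}|=1$, so $|G|=2$ and $G=\mathbb{Z}_2$. If that summand is $K_2$, then $G$ has exactly two non-identity elements, both of order $3$, so $|G|=3$ and $G=\mathbb{Z}_3$.
\end{proof}
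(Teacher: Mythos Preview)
Your proof is correct and follows essentially the same approach as the paper, which simply declares the corollary an immediate consequence of Theorem~\ref{2partite} without further argument. You have supplied exactly the natural elaboration: use bipartiteness to get $\pi_e(G)\subseteq\{1,2,3\}$, invoke the disjoint-union description of $\mathcal{P}^\ast(G)$ from that theorem, and let connectedness force a single summand.
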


A graph is {\em planar} if it has a drawing in the plane without
crossing edges. In the following result we characterize the
groups $G$ for which the graph $\mathcal{P}^\ast (G)$ is planar.
\begin{theorem}\label{planar}
Let $G$ be a nontrivial finite group. Then the following
conditions are equivalent.

$(a)$ $\mathcal{P}^\ast (G)$ is planar.

$(b)$ $\pi_e(G)\subseteq \{1, 2, 3, 4, 5, 6\}$.
\end{theorem}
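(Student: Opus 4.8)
The plan is to treat the two implications separately; the reverse one carries essentially all the content.

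\emph{$(a)\Rightarrow(b)$.} I would argue the contrapositive. If $\pi_e(G)\not\subseteq\{1,\ldots,6\}$, fix $x\in G$ with $o(x)=n\geq 7$; by Lemma~\ref{elementary-results}$(a)$ the graph $\mathcal{P}^\ast(\langle x\rangle)$ is a subgraph of $\mathcal{P}^\ast(G)$, so it is enough to find a $K_5$ in $\mathcal{P}^\ast(\mathbb{Z}_n)$. Each of the $\phi(n)$ generators of $\langle x\rangle$ is adjacent to every other nonidentity element of $\langle x\rangle$, and any two generators are adjacent; since $\phi(n)$ is even for $n\geq 3$ and equals $2$ only when $n\in\{3,4,6\}$, we have $\phi(n)\geq 4$, while $\langle x\rangle$ has $n-1\geq 6$ nonidentity elements. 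Picking any five nonidentity elements of $\langle x\rangle$ among which at least four are generators yields a clique of size $5$ (at most one chosen element is a non-generator, so no needed edge is missing). Hence $\mathcal{P}^\ast(G)$ contains $K_5$ and, by Kuratowski's theorem, is non-planar.

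\emph{$(b)\Rightarrow(a)$.} Assume $\pi_e(G)\subseteq\{1,2,3,4,5,6\}$. Since a graph is planar exactly when each of its blocks is planar, the plan is to determine the blocks of $\mathcal{P}^\ast(G)$ from a handful of adjacency facts (all immediate from the definition of the power graph and from Lemma~\ref{complete-CSS} applied inside cyclic $p$-groups): an element $v$ of order $4$ has exactly the two neighbours $v^2,v^3$; an element of order $5$ is adjacent only to the other three elements of its cyclic subgroup, so it lies in a component equal to $K_4$; an element $y$ of order $6$ is adjacent only to $y^2,y^3,y^4,y^5$; an element $w$ of order $3$ is adjacent only to $w^2$ and to those order-$6$ elements $y$ with $\langle y^2\rangle=\langle w\rangle$; and an involution $t$ is adjacent only to the order-$4$ elements squaring to $t$ and the order-$6$ elements cubing to $t$. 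Reading these off, every block of $\mathcal{P}^\ast(G)$ is one of the following planar graphs: $K_1$; $K_2$ (a pair $\{w,w^2\}$ of order-$3$ elements whose subgroup lies in no cyclic subgroup of order $6$); $K_3$ (the nonidentity part of a cyclic subgroup of order $4$, attached to the rest only at the involution $v^2$); $K_4$ (the nonidentity part of a cyclic subgroup of order $5$); or the graph $B_m$ obtained by amalgamating $m$ copies of $\mathcal{P}^\ast(\mathbb{Z}_6)$ along the common edge joining their two order-$3$ vertices (these copies correspond to a fixed cyclic subgroup $\langle w\rangle$ of order $3$ together with all cyclic subgroups of order $6$ containing it). The graph $B_m$ is planar: it is a $K_{2,2m}$ with poles $\{w,w^2\}$ and the $2m$ order-$6$ elements on the other side, together with the edge joining the poles, the $m$ edges joining the paired order-$6$ elements, and the $m$ pendant triangles through the involutions — all insertable without crossings once the $2m$ order-$6$ vertices are arranged consecutively in their matched pairs.

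Finally, the same adjacency facts show that the only vertices shared by two distinct blocks are involutions, so $\mathcal{P}^\ast(G)$ is a tree of blocks glued at cut vertices; since planarity is a block-local property, $\mathcal{P}^\ast(G)$ is planar, and the converse implication $(b)\Rightarrow(a)$ is also trivial to verify from the block description.

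\emph{Main obstacle.} Direction $(a)\Rightarrow(b)$ is short. In $(b)\Rightarrow(a)$ the real work is proving the five adjacency facts and the resulting claim that the $\mathcal{P}^\ast(\mathbb{Z}_4)$- and $\mathcal{P}^\ast(\mathbb{Z}_6)$-pieces meet only along involutions (so that these are genuine cut vertices); once this structural picture is established, checking planarity of each block type and reassembling the embedding are routine.
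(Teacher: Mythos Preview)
Your proposal is correct. For $(a)\Rightarrow(b)$ you take essentially the paper's route—locate a $K_5$ inside $\mathcal{P}^\ast(\langle x\rangle)$ using generators—but streamline it: the paper treats prime-power orders $\geq 7$, the condition $\phi(o(x))\geq 5$, and the leftover orders $10$ and $12$ as separate cases, whereas your single observation that $\phi(n)\geq 4$ for every $n\geq 7$ handles everything at once.

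For $(b)\Rightarrow(a)$ the two arguments share the same underlying idea (decompose $\mathcal{P}^\ast(G)$ into overlapping copies of $\mathcal{P}^\ast(\mathbb{Z}_k)$ with $k\leq 6$ and control how the pieces meet) but organise it differently. The paper first strips off the order-$5$ components as isolated $K_4$'s, writes the remainder as $\bigcup_{o(x)\neq 5}\mathcal{P}^\ast(\langle x\rangle)$, records the bounds $|\mathbb{Z}_2\cap\mathbb{Z}_4|\leq 2$, $|\mathbb{Z}_2\cap\mathbb{Z}_6|\leq 2$, $|\mathbb{Z}_4\cap\mathbb{Z}_6|\leq 2$, $|\mathbb{Z}_3\cap\mathbb{Z}_6|\leq 3$, and then asserts that planar graphs glued along a common vertex or a common edge remain planar. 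Your block decomposition is more explicit and more rigorous: you classify the biconnected pieces as $K_1$, $K_2$, $K_3$, $K_4$, or the graphs $B_m$, give a concrete planar embedding of $B_m$, and check that distinct blocks can meet only at involutions. The payoff is a cleaner argument and, as a by-product, the full block--cut-vertex structure of $\mathcal{P}^\ast(G)$, which the paper's proof does not extract. One small point worth recording explicitly in your write-up: two distinct cyclic subgroups of order $6$ containing the same $\langle w\rangle$ necessarily have distinct involutions (since $\langle y\rangle=\langle w,y^3\rangle$), so the $m$ copies amalgamated into $B_m$ really do contribute $m$ separate degree-$2$ involution vertices, exactly as your embedding of $B_m$ assumes.
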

\begin{proof} $(a)\Longrightarrow (b)$. Let $x$ be an arbitrary element
of $G$ and set $H:=\langle x\rangle$. Then, by Lemma
\ref{elementary-results} $(a)$, $\mathcal{P}^\ast (H)$ is a
subgraph of $\mathcal{P}^\ast (G)$. On the one hand, if $x$ has
prime-power order and $o(x)\geq 7$, then $\mathcal{P}^\ast (H)$
is complete, thus $\mathcal{P}^\ast (H)$ and so $\mathcal{P}^\ast
(G)$ contains $K_5$, which shows that $\mathcal{P}^\ast (G)$ is
not planar (Theorem 6.2.2, \cite{west}). With the similar
argument, we can verify that if $\phi(o(x))\geq 5$, then $H$ has
at least $5$ generators, which forces $\mathcal{P}^\ast (H)$ and
so $\mathcal{P}^\ast (G)$ again contains $K_5$, and hence
$\mathcal{P}^\ast (G)$ is not planar. In particular, we conclude
that $\pi(G)\subseteq \{2, 3, 5\}$ and $\pi_e(G)\subseteq \{1, 2,
3, 4, 5, 6, 10, 12\}$.

Furthermore, if $G$ contains an element of order $10$ or $12$, say
$x$, then the cyclic subgroup $\langle x\rangle$ has four
generators, each of them is adjacent to all other elements of
$\langle x\rangle$ in $\mathcal{P}^\ast(\langle x\rangle)$. Now
the induced subgraph on these generators and the vertex $x^2$ is
a clique which is isomorphic to $K_5$. Thus $\mathcal{P}^\ast
(\langle x\rangle)$ and so $\mathcal{P}^\ast (G)$ contains $K_5$,
which is not planar as before. Therefore $\pi_e(G)\subseteq \{1,
2, 3, 4, 5, 6\}$, as required.

$(b)\Longrightarrow (a)$. In the case when $\pi_e(G)\subseteq \{1,
2, 3, 4, 5\}$, $G$ is an $EPPO$-group and from Lemma
\ref{elementary-results} $(b)$ it follows that the power graph
$\mathcal{P}(G)$ is the union of complete subgraphs $K_2$, $K_3$,
$K_4$ and $K_5$ with exactly one common vertex, i.e., the identity
element. Hence the graph $\mathcal{P}^\ast(G)$ is a disjoint union
of complete subgraphs $K_1$, $K_2$, $K_3$ and $K_4$, and so it
does not contain a subdivision of $K_5$ or $K_{3,3}$, which shows
that $\mathcal{P}^\ast (G)$ is planar (Theorem 6.2.2,
\cite{west}).

Now, we may assume that $6\in \pi_e(G)$. In this case the only
possibilities for $\pi_e(G)$ are:
$$ \{1, 2, 3, 6\}, \ \ \{1, 2, 3, 4, 6\}, \ \ \{1, 2, 3, 5, 6\} \ \ {\rm and} \ \  \{1, 2, 3, 4, 5, 6\},$$
or equivalently $6\in \mu(G)\subseteq \{4, 5, 6\}$. Note that, if
$5\in \pi(G)$, then $G$ is $C_{5,5}$-group (i.e., a group whose
order is divisible by $5$ and in which the centralizer of a
$5$-element is a $5$-group). Thus for every $5$-element of $G$,
say $x$, the subgraph $\mathcal{P}^\ast(\langle x\rangle)\cong
K_4$ is a connected component of $\mathcal{P}^\ast(G)$ (see Lemmas
\ref{complete-CSS} and \ref{elementary-results} $(b)$). In the
case when $G$ contains an element of order $4$, say $y$, noting
$4\in \mu(G)$ a similar reasoning shows that the subgraph
$\mathcal{P}^\ast(\langle y\rangle)\cong K_3$ is a connected
component of $\mathcal{P}^\ast(G)$. In general case we have
$$\mathcal{P}^\ast(\mathbb{Z}_n)\cong K_{n-1}, \ \ \ n=2, 3, 4, 5,$$
each of them is planar. The power graph
$\mathcal{P}^\ast(\mathbb{Z}_6)$ is also planar, indeed we have
the following planar drawing of the power graph
$\mathcal{P}^\ast(\mathbb{Z}_6)$:

\vspace{1.5cm} \setlength{\unitlength}{4mm}
\begin{picture}(1,9)(5,1)
\linethickness{0.3pt} %
\put(21,8){\circle*{0.5}}%
\put(27,8){\circle*{0.5}}%
\put(24,12.5){\circle*{0.5}}%
\put(24,9.5){\circle*{0.5}}%
\put(21,8){\line(1,0){6}}%
\put(21,8){\line(2,3){3}}%
\put(24,12.5){\line(2,-3){3}}%
\put(24,9.5){\line(0,1){3}}%
\put(21,8){\line(2,1){3}}%
\put(24,9.5){\line(2,-1){3}}%
\put(21,8){\line(2,-1){3}}%
\put(27,8){\line(-2,-1){3}}%
\put(24,6.5){\circle*{0.5}}%
\put(17,4.5){{\small \bf Figure 4.} \ The power graph \small$\mathcal{P}^\ast(\mathbb{Z}_6)$}%
\end{picture}

\noindent As discussed above, we have
$$\mathcal{P}^\ast(G)=\bigcup_{x\in G}\mathcal{P}^\ast(\langle x\rangle)=K_4\oplus K_4\oplus \cdots \oplus K_4\oplus
\bigcup_{o(x)\neq 5}\mathcal{P}^\ast(\langle x\rangle),$$ On the
other hand, since $$|\mathbb{Z}_2\cap\mathbb{Z}_4|\leq 2, \ \
|\mathbb{Z}_2\cap\mathbb{Z}_6|\leq 2, \ \
|\mathbb{Z}_4\cap\mathbb{Z}_6|\leq 2 \ \ \ \mbox{and} \ \ \
|\mathbb{Z}_3\cap\mathbb{Z}_6|\leq 3,$$ the subgraph
$$\bigcup_{o(x)\neq 5}\mathcal{P}^\ast(\langle x\rangle)$$ of
$\mathcal{P}^\ast(G)$ consist of some planar graphs sharing a
common vertex or a common edge, which implies that
$\bigcup_{o(x)\neq 5}\mathcal{P}^\ast(\langle x\rangle)$ and so
$\mathcal{P}^\ast(G)$ is planar. This completes the proof.
\end{proof}

In \cite{Huang-Shi, shi-wenze, shi-yang-86, Shi-Yang}, under the
assumption of finiteness of a group the authors study the
structure of $EPPO$, $EPO$, $EPPOM$ and $EPOM$-groups. In
\cite{Gupta-Mazurov, Levi-Wrden, Lytkina, Mazurov-60}, the
authors determined also groups with small orders of elements.

{\bf Remark 2.}  Let $G$ be a non-trivial finite group with
$\pi_e(G)\subseteq \{1, 2, 3, 4, 5, 6\}$. The classification of
all such groups are listed below:

\begin{tabular}{ll}
$(1)$  & $\pi_e(G)=\{1, 2\}$ and $G$ is an elementary abelian
$2$-group.\\
$(2)$ & $\pi_e(G)=\{1, 3\}$ and $G$ is nilpotent of class at most
3 (\cite{Levi-Wrden}).\\
$(3)$ & $\pi_e(G)=\{1, 5\}$ and $G$ is a $5$-group of exponent 5.\\
$(4)$ &$\pi_e(G)=\{1, 2, 3\}$ and $G=[N]K$ is a Frobenius group
where either $N\cong \mathbb{Z}_3^t$, $K\cong \mathbb{Z}_2$ \\ &
or $N\cong
\mathbb{Z}_2^{2t}$, $K\cong \mathbb{Z}_3$ (\cite{Neumann}).\\
$(5)$ &  $\pi_e(G)=\{1, 2, 4\}$ and $G$ is a $2$-group of exponent $4$. \\
$(6)$ &  $\pi_e(G)=\{1, 2, 5\}$ and $G$ is a $EPO$-group. (\cite{shi-wenze})\\
$(7)$ & $\pi_e(G)=\{1, 3, 5\}$ and $G=[N]K$ is a Frobenius group
where either\\ & $\bullet$ $N$ is a $5$-group which is nilpotent
of class at most $2$ and $|K|=3$, or \\ & $\bullet$ $N$ is a
$3$-group which is nilpotent of class at most $3$ and $K$ is a
$5$-group. (\cite{Gupta-Mazurov})\\
$(8)$ & $\pi_e(G)=\{1, 2, 3, 4\}$ and $G=[N]K$ and one of the following occurs: \cite{Brandl-Shi, Lytkina} \\
& $\bullet$  $N=\mathbb{Z}_3^{2t}$ and $K\cong \mathbb{Z}_4$ or $K\cong Q_8$, and $G$ is a Frobenius group. \\
& $\bullet$  $N=\mathbb{Z}_2^{2t}$ and $K\cong S_3$. \\
& $\bullet$  $N$ is a $2$-group with exponent $4$ and of class $\leq 2$ and $K\cong \mathbb{Z}_3$. \\
$(9)$ & $\pi_e(G)=\{1, 2, 4, 5\}$ and $G=[N]K$ and one of the following occurs: (\cite{Gupta-Mazurov}) \\
& $\bullet$ $N$ is an elementary abelian $2$-group and $K$ is a
non-abelian group of order 10. \\ & $\bullet$ $N$ is an
elementary abelian $5$-group and $K$ is isomorphic to a subgroup
of $Q_8$.\\ & $\bullet$ $N$ is a $2$-group
which is nilpotent of class at most $6$ and $K$ is a $5$-group.\\
$(10)$ & $\pi_e(G)=\{1, 2, 3, 5\}$ and $G\cong A_5$. (\cite{shi-wenze, Z-Maz})\\
$(11)$ & $\pi_e(G)=\{1, 2, 3, 6\}$ and $G=PQ$ is a $\{2,
3\}$-group of exponent $6$, where $P$ is an \\ &  elementary
abelian $2$-group and
$Q$ is a $3$-group with exponent $3$.\\ &  (\cite{Shi-Yang}, Theorem 1 $(III)$)\\
$(12)$ & $\pi_e(G)=\{1, 2, 3, 4, 5\}$ and one of the following
holds: (\cite{Brandl-Shi, Mazurov-60})\\
& $\bullet$ $G\cong A_6$.\\
& $\bullet$ $G=[N]K$, where $N$ is an elementary
abelian $2$-group and a direct sum of natural\\ & ${\rm SL}(2,4)$-modules, and $K\cong A_5$.\\
$(13)$ & $\pi_e(G)=\{1, 2, 3, 4, 6\}$ and $G$ is a solvable
$EPPMO$-group. In particular, $G$ is a \\
&  $\{2,3\}$-group of exponent $12$. (\cite{Huang-Shi}, Theorem 2.1 $(I)$)\\
$(14)$ & $\pi_e(G)=\{1, 2, 3, 5, 6\}$ and $G$ is a solvable group.
(\cite{Yang-Wang}, Theorem 2).\\
$(15)$ & $\pi_e(G)=\{1, 2, 3, 4, 5, 6\}$ and one of the following
holds: (\cite{Brandl-Shi})\\
& $\bullet$ $G=[\mathbb{Z}_5^{2t}]K$ is a Frobenius group, where
$K\cong [\mathbb{Z}_3]\mathbb{Z}_4$
or $K\cong {\rm SL}(2,3)$.\\
& $\bullet$ $G/O_2(G)\cong A_5$ and $O_2(G)$ is elementary
abelian and a direct sum of natural and\\ & orthogonal ${\rm SL}(2,4)$-modules.\\
& $\bullet$ $G=S_5$ or $G=S_6$.
\end{tabular}
\section{The Power Graph of Finite Group Which is Eulerian}
Recall that a trail in a graph $\Gamma$ is a walk with no
repeated edge. A trail that traverses every edge and every vertex
of $\Gamma$ is called an Eulerian trail. A closed Eulerian trail
is called an Eulerian circuit. A connected graph is said to be
Eulerian if it contains an Eulerian circuit, and non-Eulerian
otherwise. A well-known theorem due to Euler states {\em a
connected graph $\Gamma$ is Eulerian if and only if all the
vertices of $\Gamma$ are of even degree}. We would like to
examine now the cyclic groups $G$ for which the graph
$\mathcal{P}^\ast (G)$ is Eulerian. Our principal result in this
section is the following.
\begin{proposition}\label{proposition4} Let $G$ be a cyclic group of order $n$. Then $\mathcal{P}^\ast (G)$ is Eulerian
if and only if $n$ is a power of $2$.
\end{proposition}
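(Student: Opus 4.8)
The plan is to invoke Euler's criterion quoted just above the statement: since $G$ is cyclic with $|G|=n>1$, the graph $\mathcal{P}^\ast(G)$ is connected (each generator of $G$ is adjacent to every other non-identity element, as noted in the excerpt), so $\mathcal{P}^\ast(G)$ is Eulerian if and only if every vertex has even degree. Thus the whole problem reduces to a parity computation of vertex degrees in a cyclic group, for which Lemma~\ref{cyclic} provides an exact formula. Two simple remarks will streamline the bookkeeping: first, the identity is joined to every other vertex in $\mathcal{P}(G)$, so deleting it lowers every degree by exactly $1$, i.e. $d_{\mathcal{P}^\ast(G)}(v)=d_{\mathcal{P}(G)}(v)-1$; second, $\phi(\ell)$ is odd only for $\ell\in\{1,2\}$ and even for every $\ell\ge 3$.

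For the necessity direction I would argue contrapositively: assume $n$ is not a power of $2$, pick an odd prime $p\mid n$, and consider the vertex $g=x^{n/p}$ of order $p$, where $G=\langle x\rangle$. By Lemma~\ref{cyclic}, $d_{\mathcal{P}(G)}(g)=p-1+\sum_{d\mid (n/p),\, d\neq n/p}\phi(n/d)=p-1+\sum_{p\mid\ell\mid n,\ \ell\neq p}\phi(\ell)$. Every $\ell$ in that last sum is a multiple of $p$, hence $\ell\ge 3$, so $\phi(\ell)$ is even, while $p-1$ is even because $p$ is odd; therefore $d_{\mathcal{P}(G)}(g)$ is even and $d_{\mathcal{P}^\ast(G)}(g)=d_{\mathcal{P}(G)}(g)-1$ is odd, so $\mathcal{P}^\ast(G)$ is not Eulerian. (One may also avoid Lemma~\ref{cyclic} altogether: the neighbours of $g$ in $\mathcal{P}^\ast(G)$ are the $p-2$ other non-identity elements of $\langle g\rangle$ together with every element of order $\ell$ for each $\ell$ with $p\mid\ell\mid n$ and $\ell\neq p$, so $d_{\mathcal{P}^\ast(G)}(g)=(p-2)+\sum_{p\mid\ell\mid n,\ \ell\neq p}\phi(\ell)$, which is odd by the same parity remark.)

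For the sufficiency direction, let $n=2^s$ with $s\ge 1$, and take a vertex $g$ of order $2^j$, $1\le j\le s$. By Lemma~\ref{cyclic}, $d_{\mathcal{P}(G)}(g)=2^j-1+\sum_{j<i\le s}\phi(2^i)=2^j-1+\sum_{j<i\le s}2^{i-1}$, hence $d_{\mathcal{P}^\ast(G)}(g)=2^j-2+\sum_{j<i\le s}2^{i-1}$. Here $i>j\ge 1$ forces $i-1\ge 1$, so every term $2^{i-1}$ is even, and $2^j-2$ is even for every $j\ge 1$ (it equals $0$ when $j=1$, and note that the single odd value $\phi(2)=1$ never enters the sum since $i>j\ge 1$). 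Thus every vertex of $\mathcal{P}^\ast(G)$ has even degree, and since $\mathcal{P}^\ast(G)$ is connected it is Eulerian. The degenerate case $n=1$ (empty vertex set) is handled by convention, and $n=2$ gives a single vertex of degree $0$, trivially Eulerian.

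I do not expect a serious obstacle: the only delicate point is the parity accounting, in particular tracking the lone odd value $\phi(2)=1$ and verifying it never occurs in the relevant sums — which is precisely the reason an odd-degree vertex appears exactly when $n$ has an odd prime factor. The necessity half is the more substantive one, since it requires exhibiting an explicit vertex of odd degree; the clean way to do this is the observation above that in a cyclic group every element of odd prime order has odd degree in $\mathcal{P}^\ast(G)$.
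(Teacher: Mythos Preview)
Your proof is correct, and in the necessity direction it is considerably simpler than the paper's. For sufficiency the paper takes a shorter path: since a cyclic $2$-group has complete power graph (Lemma~\ref{complete-CSS}), $\mathcal{P}^\ast(G)\cong K_{n-1}$ and every degree is $n-2$, which is even; your computation via Lemma~\ref{cyclic} recovers the same value $2^s-2$ but with more effort. For necessity, however, your argument is much cleaner. The paper first observes that $n$ must be even (from the degree $n-2$ of a generator), then writes $n=2^{\alpha_1}p_2^{\alpha_2}\cdots p_k^{\alpha_k}$ with $k\ge 2$, passes to the direct-product decomposition of $G$, and computes the exact degree of the element $\bar{x}=(1,\bar{x}_2,\ldots,\bar{x}_k)$ of order $n/2^{\alpha_1}$ through a three-case analysis using the Chinese Remainder Theorem, arriving at the explicit value $\prod_{i\ge 2}p_i^{\alpha_i}-2+(2^{\alpha_1}-1)\prod_{i\ge 2}\phi(p_i^{\alpha_i})$, which is odd. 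You bypass all of this by picking an element of odd prime order $p$ and noting that its degree in $\mathcal{P}^\ast(G)$ is $(p-2)+\sum_{p\mid\ell\mid n,\ \ell\neq p}\phi(\ell)$, odd because $p-2$ is odd and every $\phi(\ell)$ in the sum has $\ell\ge 2p\ge 6$ or $\ell=p^2\ge 9$, hence is even. Your choice of witness is simpler (prime order rather than the full odd part of $n$), and a pure parity observation replaces an explicit degree computation; the paper's route yields an exact formula but at the cost of substantially more bookkeeping.
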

\begin{proof} Suppose first that $n$ is a power of $2$. Then it follows from Lemma \ref{l1} that $\mathcal{P}^\ast
(G)$ is a complete graph, and so the degree of all vertices of
$\mathcal{P}^\ast (G)$ is $n-2$, which is even. This shows that
$\mathcal{P}^\ast (G)$ is Eulerian.

Conversely, we assume that $\mathcal{P}^\ast (G)$ is Eulerian. We
want to prove that $n$ is a power of $2$. First of all, if $x$ is
a generator of $G$, then by Lemma \ref{elementary-results} $(b)$,
$d(x)=o(x)-2=n-2$, which shows that $n$ must be even. We claim now
that $n$ is a power of $2$. Assume the contrary. Let $n$ have
prime-power factorization
$$n=p_1^{\alpha_1}p_2^{\alpha_2}\cdots p_k^{\alpha_k},$$
where $p_1=2, p_2, \ldots, p_k$ are distinct primes and $k\geq 2,
\alpha_1, \alpha_2, \ldots, \alpha_k$ are positive integers. Then
$$G\cong \mathbb{Z}_{p_1^{\alpha_1}}\times
\mathbb{Z}_{p_2^{\alpha_2}}\times \cdots \times
\mathbb{Z}_{p_k^{\alpha_k}}.$$ Let
$\mathbb{Z}_{p_i^{\alpha_i}}=\langle \bar{x}_i\rangle$, $i=1, 2,
\ldots, k$ and $\bar{x}=(1, \bar{x}_2, \ldots, \bar{x}_k)$.
Clearly $o(\bar{x})=p_2^{\alpha_2}\cdots p_k^{\alpha_k}$. In the
sequel, we will show that
\begin{equation}\label{eee33}
d(\bar{x})=\prod_{i=2}^{k}p_i^{\alpha_i}-2+(2^{\alpha_1}-1)\prod_{i=2}^{k}\phi(p_i^{\alpha_i}),
\end{equation}
which is an odd number, because $\phi(p_i^{\alpha_i})$ is even
for all $i\geq 2$. Therefore in this case we arrive to a
contradiction, since the degree of all vertices in
$\mathcal{P}^\ast (G)$ must be even.

The proof of the claim (\ref{eee33}) requires some calculations.
Let $\bar{z}=(\bar{x}_1^{m_1}, \bar{x}_2^{m_2}, \ldots,
\bar{x}_k^{m_k})$ be an arbitrary element in $G\setminus \{1\}$,
where $0\leq m_i<p_i^{\alpha_i}$, $i=1,\ldots, k$, and
$m_1+m_2+\cdots+m_k\neq 0$. It will be convenient to consider
among three cases:

$(a)$ $m_1\neq 0$ and $m_2=m_3=\cdots=m_k=0$,

$(b)$ $m_1=0$ and $m_2+m_3+\cdots+m_k\neq 0$,

$(c)$ $m_1\neq 0$ and $m_2+m_3+\cdots+m_k\neq 0$.

Suppose first that $(a)$ holds, that is $\bar{z}=(\bar{x}_1^{m_1},
1, \ldots, 1)$, where $1\leq m_1<p_1^{\alpha_1}$. Then
$o(\bar{z})$ is a power of $2$, and since $(o(\bar{x}),
o(\bar{z}))=1$, $\bar{x}\nsim \bar{z}$ in $\mathcal{P}^\ast (G)$
by Lemma \ref{elementary-results} $(c)$.

Suppose next that $(b)$ holds, that is $\bar{z}=(1,
\bar{x}_2^{m_2}, \ldots, \bar{x}_k^{m_k})$, where $0\leq
m_i<p_i^{\alpha_i}$, $i=2,\ldots, k$, and $m_2+\cdots+m_k\neq 0$.
In what follows, we will prove that $\bar{x}\sim \bar{z}$. To do
this, we show that $\bar{z}\in \langle \bar{x} \rangle$, or
equivalently $\bar{z}=\bar{x}^m$ for some $m$. Let us consider
the following system of simultaneous congruences
\begin{equation}\label{e333333}
\left\{\begin{array}{l} x\equiv m_2\pmod{p_2^{\alpha_2}}\\[0.1cm]
x\equiv m_3\pmod{p_3^{\alpha_3}}\\
\ \ \ \vdots \\[0.1cm]
x\equiv m_k\pmod{p_k^{\alpha_k}}.\\
\end{array} \right.
\end{equation}
Since $p_2^{\alpha_2}, p_3^{\alpha_3}, \ldots, p_k^{\alpha_k}$
are pairwise relatively prime, the Chinese Remainder Theorem
tells us that there is a solution for the system of congruences
(\ref{e333333}), say  $x=m$. Now,  it is easy to see that
$\bar{z}=\bar{x}^m$, as required.

Finally, assume that $(c)$ holds, that is
$\bar{z}=(\bar{x}_1^{m_1}, \bar{x}_2^{m_2}, \ldots,
\bar{x}_k^{m_k})$, where $0\leq m_i<p_i^{\alpha_i}$, $m_1\neq 0$
and $m_2+\cdots+m_k\neq 0$. In this case, we claim that
\begin{center} {\em
$\bar{x}\sim \bar{z}$  \ if and only if \ $(m_i,
p_i^{\alpha_i})=1$ for each $i=2, 3, \ldots, k$.}
\end{center}

For the proof of the claim, we assume first that $(m_i,
p_i^{\alpha_i})=1$ for each $i=2, 3, \ldots, k$.  Let
$o(\bar{x}_1^{m_1})=l$ which is a power of 2, and let $n_i:=lm_i$
for each $i=2, 3,\ldots, k$. Then we obtain
$$\bar{z}^l=(1, \bar{x}_2^{n_2},
\ldots, \bar{x}_k^{n_k}).$$ Since for each $i=2, 3, \ldots, k$,
we have $(n_i, p_i^{\alpha_i})=1$, $n_i$ has a unique
multiplicative inverse modulo $p_i^{\alpha_i}$, which is denoted
by $n_i^\ast$, that is $n_in_i^\ast\equiv 1
\pmod{p_i^{\alpha_i}}$. Now, we consider the system of
simultaneous congruences
\begin{equation}\label{e4434}
\left\{\begin{array}{l} x\equiv n_2^\ast\pmod{p_2^{\alpha_2}}\\[0.1cm]
x\equiv n_3^\ast\pmod{p_3^{\alpha_3}}\\
\ \ \ \vdots \\[0.1cm]
x\equiv n_k^\ast\pmod{p_k^{\alpha_k}}.\\
\end{array} \right.
\end{equation}
Again, by the Chinese Remainder Theorem there exists a solution
for the system of congruences (\ref{e4434}), say $x=n$.
Therefore, we obtain $$\bar{z}^{ln}=(1, \bar{x}_2^{n_2n}, \ldots,
\bar{x}_k^{n_kn})=(1, \bar{x}_2, \ldots, \bar{x}_k)=\bar{x},$$
which shows that $\bar{x}\in \langle \bar{z}\rangle$, and so
$\bar{x}\sim \bar{z}$, as claimed.

Now suppose that there exists an integer $i\in \{2, 3, \ldots,
k\}$ such that $(m_i, p_i^{\alpha_i})\neq 1$. Hence $p_i$ divides
$m_i$. We want to show that $\bar{x}\nsim \bar{z}$. Suppose to
the contrary that $\bar{x}\sim \bar{z}$. Then by the definition
we have $\bar{x}\in \langle\bar{z} \rangle$ or $\bar{z}\in
\langle\bar{x} \rangle$. Clearly $\bar{z}\notin \langle\bar{x}
\rangle$, because $\bar{x}_1^{m_1}\neq 1$. On the other hand, if
$\bar{x}\in \langle\bar{z} \rangle$, then $\bar{x}=\bar{z}^l$ for
some $l$. In particular, we conclude that
$\bar{x}_i=\bar{x}_i^{lm_i}$, and this means that $lm_i\equiv
1\pmod{p_i^{\alpha_i}}$, or equivalently $p_i^{\alpha_i}$ and so
$p_i$ divides $lm_i-1$. But this is contrary to the fact that
$p_i|m_i$.

Finally, we are now ready to calculate the degree of vertex
$\bar{x}$ in $\mathcal{P}^\ast (G)$. By what observed above
(cases $(a)-(c)$) we obtain
$$\begin{array}{lll}
d(\bar{x})&=& |\langle \bar{x}\rangle\setminus \{1,
\bar{x}\}|+|\{(\bar{x}_1^{m_1}, \bar{x}_2^{m_2}, \ldots,
\bar{x}_k^{m_k}) \ | \ 0<m_1<p_1^{\alpha_1}, (m_i,
p_i^{\alpha_i})=1, 2\leq i\leq k \}|\\[0.3cm]
&= &
\prod_{i=2}^{k}p_i^{\alpha_i}-2+(2^{\alpha_1}-1)\prod_{i=2}^{k}\phi(p_i^{\alpha_i}),\\
\end{array}$$
which completes the proof.
\end{proof}
\section{Cut-edges in Power Graphs}
For any edge $e$ of a graph $\Gamma$, if
$c(\Gamma-e)=c(\Gamma)+1$, the edge $e$ is called a cut edge of
$\Gamma$, where $c(\Gamma)$ denotes the number of connected
components of $\Gamma$. Note that the following characterization
of cut edges is well-known: {\em An edge $e$ of a graph $\Gamma$
is a cut edge if and only if $e$ belongs to no cycle of $\Gamma$.}
\begin{theorem}\label{cutedges} Let $G$ be a finite group and $\Gamma=\mathcal{P}^\ast
(G)$. An edge $e=xy\in \Gamma$ is a cut edge if and only if
$d_{\Gamma}^-(x)=d_{\Gamma}^+(x)=1$.
\end{theorem}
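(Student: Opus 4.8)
The plan is to show that each of the two conditions is equivalent to ``$o(x)=3$ and $x$ is a vertex of degree $1$ in $\Gamma$ whose unique neighbour is $x^{2}$''; throughout, $d^{+}$ and $d^{-}$ are read in the associated directed power graph $\vec{\mathcal{P}}^{\ast}(G)$. I will use two facts that are immediate from the definition of the power graph. First, in $\vec{\mathcal{P}}^{\ast}(G)$ one has $N^{+}(x)=\langle x\rangle\setminus\{1,x\}$, so $d^{+}(x)=o(x)-2$; in particular $d^{+}(x)=1$ exactly when $o(x)=3$, and then $N^{+}(x)=\{x^{2}\}$. Second, in the underlying undirected graph $N_{\Gamma}(x)=N^{+}(x)\cup N^{-}(x)$. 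I also use the characterisation recalled just above the theorem: an edge is a cut edge iff it lies on no cycle; in particular any edge incident with a vertex of degree $1$ is a cut edge.

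For the ``if'' direction, assume $d^{+}(x)=d^{-}(x)=1$. Then $o(x)=3$ and $N^{+}(x)=\{x^{2}\}$; since $x^{2}$ generates $\langle x\rangle$ we have $x\in\langle x^{2}\rangle$, i.e. $x^{2}\in N^{-}(x)$, so $d^{-}(x)=1$ forces $N^{-}(x)=\{x^{2}\}$ as well. Hence $N_{\Gamma}(x)=N^{+}(x)\cup N^{-}(x)=\{x^{2}\}$, the vertex $x$ has degree $1$ in $\Gamma$, the edge $e=xy$ is necessarily $xx^{2}$, and it is therefore a cut edge.

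For the ``only if'' direction, let $e=xy$ be a cut edge. Since $x\sim y$, one of $\langle x\rangle,\langle y\rangle$ contains the other; relabelling the endpoints we may assume $y\in\langle x\rangle$ (this causes no loss of generality, since the conclusion $d^{+}=d^{-}=1$ will turn out to hold for both endpoints of any cut edge). As $y\notin\{1,x\}$ we already get $o(x)\ge3$, hence $\phi(o(x))\ge2$. The heart of the proof is to eliminate all cases except $o(x)=3,\ y=x^{2}$ by exhibiting a triangle through $e$. If $\langle y\rangle\ne\langle x\rangle$: pick a generator $z\ne x$ of $\langle x\rangle$; then $z\ne y$, $z\sim x$, and $y\in\langle x\rangle=\langle z\rangle$ gives $z\sim y$, so $\{x,y,z\}$ is a triangle containing $e$ --- contradiction; hence $\langle y\rangle=\langle x\rangle$ and $o(y)=o(x)$. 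If now $o(x)\ge4$: choose $z\in\langle x\rangle\setminus\{1,x,y\}$ (nonempty since $|\langle x\rangle|\ge4$); then $z\sim x$ and $z\sim y$ (as $z\in\langle x\rangle=\langle y\rangle$), once more a triangle through $e$ --- contradiction. Since $o(x)=2$ is impossible once $\langle y\rangle=\langle x\rangle$ and $y\ne x$, we are left with $o(x)=3$, whence $\langle x\rangle=\{1,x,x^{2}\}$ and $y=x^{2}$; consequently $d^{+}(x)=o(x)-2=1$. Finally, $x^{2}=y\in N^{-}(x)$; and if some $w\in N^{-}(x)$ with $w\ne x^{2}$ existed, then $x\in\langle w\rangle$, so $\langle x\rangle\subseteq\langle w\rangle$ and in particular $x^{2}\in\langle w\rangle$, giving $w\sim y$ --- together with $w\sim x$ and $x\sim y$ this is yet another triangle through $e$, a contradiction. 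Hence $N^{-}(x)=\{x^{2}\}$ and $d^{-}(x)=1$.

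I expect the main --- though still elementary --- obstacle to be the case analysis in the ``only if'' direction: once $\langle y\rangle=\langle x\rangle$, the ``second generator'' trick is unavailable (for $o(x)\in\{4,6\}$ the group $\langle x\rangle$ has no generator besides $x$ and $y$), and the third vertex of the triangle must instead be found among the non-generators of $\langle x\rangle$, which is why the two sub-cases genuinely need to be separated. Everything else --- the identities $N_{\Gamma}(x)=N^{+}(x)\cup N^{-}(x)$ and $d^{+}(x)=o(x)-2$, and the small-order bookkeeping --- is routine and reads off directly from the definition of the power graph.
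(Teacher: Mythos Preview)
Your proof is correct and follows essentially the same route as the paper's: both directions reduce to showing that a cut edge forces $o(x)=3$, $y=x^{2}$, and $N^{-}(x)=\{x^{2}\}$ by exhibiting a triangle through $e$ in every other case, and conversely that these conditions make $x$ a degree-$1$ vertex. Your case split in the ``only if'' direction (distinguishing $\langle y\rangle\neq\langle x\rangle$ from $\langle y\rangle=\langle x\rangle$) is in fact slightly more careful than the paper's --- the paper simply says ``$\phi(o(x))\ge 2$ forces $e$ to lie in a cycle'' without noting that when $y$ itself is the second generator one must pick the third vertex among the non-generators --- and your direct triangle argument for $N^{-}(x)=\{x^{2}\}$ replaces the paper's appeal to Lemma~\ref{elementary-results-digraphs}(e), but these are cosmetic differences, not a different strategy.
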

\begin{proof} ($\Longrightarrow$) Assume that $e=xy$ is a cut
edge of $\Gamma$. Then by definition we have $y\in \langle
x\rangle$ or $x\in \langle y\rangle$, and hence $o(y)|o(x)$ or
$o(x)|o(y)$. Without loss of generality we may assume that $y\in
\langle x\rangle$ and so $o(y)|o(x)$. If $o(x)\geq 4$, then
$\phi(o(x))\geq 2$, this means that the cyclic group $\langle x
\rangle$ has at least two generators which forces $e$ lies in a
cycle of $\Gamma$. Therefore $o(x)\leq 3$. If $o(x)=2$, then $x$
and $y$ would be two involutions which is joined be an edge, a
contradiction by Lemma \ref{elementary-results} $(c)$.  Finally,
we conclude that $o(x)=3$, $y=x^2$ and from Lemma
\ref{elementary-results-digraphs} $(e)$ it follows that
$N_{\Gamma}^-(x)\setminus\{x^2\}=N_{\Gamma}^-(x^2)\setminus\{x\}(=\emptyset$,
otherwise $e=xx^2$ lies on a cycle of $\Gamma$) and
$N_{\Gamma}^+(x)\setminus\{x^2\}=N_{\Gamma}^+(x^2)\setminus\{x\}(=\emptyset$,
because $x$ an element of order 3). Therefore
$d_{\Gamma}^-(x)=d_{\Gamma}^+(x)=1$.

($\Longleftarrow$) Conversely, assume that
$d_{\Gamma}^-(x)=d_{\Gamma}^+(x)=1$. Then $N_\Gamma^+(x)=\{1,
x^2\}$, $N_\Gamma^-(x)=\{x^2\}$ and so $o(x)=3$. This means that
$y=x^2$ and $e=xx^2$ is a singleton edge in $\Gamma$, and hence
the vertices $x$ and $x^2$ are two isolated vertices in
$\Gamma-e$, which implies that $c(\Gamma-e)=c(\Gamma)+1$.
Therefore $e$ is a cut edge in $\Gamma$, as claimed.
\end{proof}

The condition $d_{\Gamma}^-(x)=d_{\Gamma}^+(x)=1$ in Theorem
\ref{cutedges} is equivalent to the following condition:

{\em  ``The group $G$ has an element $x$ of order $3$ such that
for all elements $y\in G\setminus \langle x\rangle$,
$x\not\in\langle y\rangle$."}

There are many examples of such groups, for instance:
$$S_3=[\mathbb{Z}_3]\mathbb{Z}_2, \ \  F_{21}=[\mathbb{Z}_7]\mathbb{Z}_3, \ \  S_4, \ \ S_3\times \mathbb{Z}_3  \ \ {\rm
and} \ \ S_3\times S_3.$$ Moreover, an infinite family of such
group is
$$[\mathbb{Z}_7]\mathbb{Z}_3, \ \ [\mathbb{Z}_7\times \mathbb{Z}_7]\mathbb{Z}_3,  \ \ [\mathbb{Z}_7\times
\mathbb{Z}_7\times \mathbb{Z}_7]\mathbb{Z}_3, \ \
[\mathbb{Z}_7\times \mathbb{Z}_7\times \mathbb{Z}_7\times
\mathbb{Z}_7]\mathbb{Z}_3, \ldots$$ It is worth noting that this
family of groups can be regarded as a special case of
$EPPO$-groups. As a matter of fact, since the power graph
associated with an $EPPO$-group is the union of complete subgraphs
with exactly one common vertex (Lemma \ref{elementary-results}
$(b)$), all $EPPO$-groups $G$ with spectrum $\pi_e(G)=\{1, 3, p^s,
q^t, \ldots\}$ are such examples. Therefore, we focus our
attention on $EPPO$-groups. The complete classification of finite
$EPPO$-groups is given in \cite[Theorems 2.4 and
3.1]{shi-yang-86}.
\begin{theorem}\label{theorem-Shi} Let $G$ be a finite
$EPPO$-group. Then we have
\begin{itemize}
\item[$(1)$] If $G$ is solvable, then $|\pi(G)|\leq 2$. Moreover, if $|G|=p^\alpha
q^\beta$, $P_1$ is the maximal normal $p$-subgroup of $G$ and
$|P_1|=p^\gamma$, then the Sylow subgroups of $G/P_1$ are cyclic
or generalized quaternion, $p^{\alpha-\gamma}\mid q-1$ and
$\gamma=kb$ where $b$ is the exponent of $p$ $\pmod{q^\beta}$ for
the case of cyclic or $p$ (mod $2^{\beta-1}$) for the case of
generalized quaternion ( in this case $\gamma = \alpha$).
\item[$(2)$] If $G$ is non-solvable, then one of the following hold:

$\bullet$ $G$ is simple and $G\cong L_2(q)$, $q=5, 7, 8, 9 , 17$;
$L_3 (4)$, ${\rm Sz}(8)$ or ${\rm Sz}(32)$,

$\bullet$  $G\cong M_{10}$, or

$\bullet$  $G$ has an elementary abelian $2$-subgroup $P$, $P\lhd
G$ and $\frac{G}{P}\cong L_2(5)$, $L_2(8)$, ${\rm Sz}(8)$ or ${\rm
Sz}(32)$. \end{itemize}
\end{theorem}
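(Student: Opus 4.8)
This statement is precisely \cite[Theorems~2.4 and 3.1]{shi-yang-86}, so the plan is to reconstruct the argument given there. The conceptual starting point is the observation that a finite group $G$ is an $EPPO$-group if and only if its prime graph $\Gamma(G)$ has no edges: if some $g\in G$ has order $n$ with $|\pi(n)|\ge 2$, then a suitable power of $g$ has order $pq$ for distinct primes $p,q$, and conversely an element of order $pq$ would be such a $g$. Thus the Gruenberg--Kegel theorem on groups with disconnected prime graph applies, and the whole problem splits, along solvability, into the classical analysis of Higman for the solvable case and an application of the classification of finite simple groups for the rest.

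For part~$(1)$, I would first reduce to $|\pi(G)|=3$ by passing to a Hall $\{p,q,r\}$-subgroup --- which exists because $G$ is solvable and is again $EPPO$ since subgroups inherit the property --- and derive a contradiction. The workhorse is the elementary remark that if a $q$-element $t$ centralises a nontrivial $p$-element $x$ with $p\ne q$, then $xt$ has order divisible by $pq$, against $EPPO$; hence every $q'$-element acts \emph{fixed-point-freely} on the Fitting subgroup $F(G)$, which, being nilpotent and $EPPO$, is a $p$-group. Fixed-point-free action theory then forces the Sylow subgroups of $G/F(G)$, and in particular of $G/O_p(G)$, to be cyclic or generalised quaternion. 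Writing $|G|=p^\alpha q^\beta$ and $|O_p(G)|=p^\gamma$, the action of a Sylow $q$-subgroup on $O_p(G)/\Phi(O_p(G))$ together with elementary facts about the multiplicative order $b$ of $p$ modulo $q^\beta$ (respectively modulo $2^{\beta-1}$) yields the stated relations $p^{\alpha-\gamma}\mid q-1$ and $\gamma=kb$; extracting this arithmetic cleanly is the technical core of $(1)$.

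For part~$(2)$, since $G$ is non-solvable its prime graph is disconnected and it is neither Frobenius nor $2$-Frobenius (those are solvable here), so by the Gruenberg--Kegel theorem $G$ has a normal series $1\trianglelefteq N\trianglelefteq M\trianglelefteq G$ with $N$ nilpotent, $M/N$ non-abelian simple, and $G/M$ embedded in $\mathrm{Out}(M/N)$. One checks, using that $N$ is nilpotent, that the simple group $S:=M/N$ is again an $EPPO$-group, and then CFSG together with a computation of element orders shows $S$ is one of $L_2(q)$ for $q=5,7,8,9,17$, $L_3(4)$, ${\rm Sz}(8)$ or ${\rm Sz}(32)$ (for the rank-one groups of Lie type one verifies that only very small parameters keep the numbers $(q\pm 1)/d$ prime powers, and the alternating and sporadic groups are ruled out directly). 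It then remains to decide which extensions preserve the $EPPO$ property: for $N=1$ one runs through $\mathrm{Out}(S)$ case by case --- this is where $M_{10}$ enters, while $S_6$ and ${\rm PGL}_2(9)$ are excluded by elements of order $6$ and $10$ --- and for $N\ne1$ one shows $N$ must be an elementary abelian $2$-group on which $S$ acts so that no element of mixed order appears, which leaves exactly $L_2(5)$, $L_2(8)$, ${\rm Sz}(8)$ and ${\rm Sz}(32)$ as the admissible quotients $G/P$.

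In both parts the main obstacle is the volume of case analysis rather than any single deep step: for $(1)$ one must squeeze the precise divisibility and exponent conditions out of the fixed-point-free action, and for $(2)$ one must push the simple-group list through CFSG and then test every permissible extension against the $EPPO$ condition; by contrast the reductions (prime graph edgeless, Gruenberg--Kegel, fixed-point-free action) are comparatively routine.
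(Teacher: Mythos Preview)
The paper does not prove this theorem at all: it is quoted verbatim from \cite[Theorems~2.4 and 3.1]{shi-yang-86} as a known classification result, with no argument supplied. Your proposal therefore cannot be compared to ``the paper's own proof,'' because there is none; the paper simply imports the statement and immediately uses it to deduce Corollary~\ref{eppo-group}.

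That said, your outline is a reasonable sketch of how the original Shi--Yang argument (and its modern descendants via Gruenberg--Kegel) proceeds: the equivalence of $EPPO$ with an edgeless prime graph, Higman-type fixed-point-free analysis for the solvable case, and a CFSG-driven case check for the non-solvable case are indeed the standard ingredients. If you intend to include a full proof rather than a citation, be aware that the bulk of the work --- especially pinning down the exact arithmetic $p^{\alpha-\gamma}\mid q-1$, $\gamma=kb$ in part~(1), and verifying for each candidate simple section which extensions by a normal $2$-subgroup or by outer automorphisms remain $EPPO$ in part~(2) --- is lengthy and not something the present paper attempts. For the purposes of this paper a citation suffices, and that is exactly what the authors do.
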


An immediate consequence of Theorem \ref{theorem-Shi} is the
following which gives the structure of finite $EPPO$-groups $G$
with spectrum $\pi_e(G)=\{1, 3, p^s, q^t, \ldots\}$.
\begin{corollary}\label{eppo-group}
Let $G$ be an $EPPO$-group with $\pi_e(G)=\{1, 3, p^s, q^t,
\ldots\}$, where $p, q, \ldots$ are primes not equal to $3$. Then
$G$ is isomorphic to one of the following groups:
\begin{itemize}
\item[$(1)$]  If $G$ is solvable and non-nilpotent, then $|G|=3^\alpha
q^\beta$. If $Q$ is the maximal normal $q$-subgroup ($q \neq 3$)
of $G$, then

$(1.1)$ $q\neq 2$, $G$ has the chief factors $3, 3, \ldots, 3;
q^b, q^b, \ldots, q^b$, $\beta=kb$ and $b$ is the exponent of $q$
$\pmod {3^\alpha}$, $Q$ is the Sylow $q$-subgroup of $G$

$(1.2)$ $q=2$ and the Sylow $2$-subgroups of $G$ are not
generalized quaternion, $G$ has the chief factors $2; 3, 3,
\ldots, 3; 2^{b_1}, 2^{b_2}, \ldots,  2^{b_k}$, $b|b_i$ and $b$ is
the exponent of $3$ $\pmod{2^\beta}$.

If the maximal normal subgroup of $G$ is a $3$-subgroup, then

$(1.3)$ $q=2$ and the Sylow $2$-subgroups of $G$ are generalized
quaternion, $G$ has a chief factors $2, 2, \ldots, 2; 3^{b_1},
3^{b_2}, \ldots, 3^{b_k}$, $b|b_i$ and $b$ is the exponent of $3$
$\pmod{2^{\beta-1}}$.

\item[$(2)$] If $G$ is not solvable, then $G$ is one of the following groups:

$(2.1)$  $G$ is simple and $G\cong L_2(q)$, $q=5, 7, 9$; or
$L_3(4)$.

$(2.2)$ $G\cong M_{10}$, or

$(2.3)$ $G$ has an elementary abelian $2$-subgroup $Q$, $Q\unlhd
G$ and $G/Q\cong L_2(5)$.
\end{itemize}
\end{corollary}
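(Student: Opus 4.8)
The plan is to deduce the corollary directly from Theorem \ref{theorem-Shi} by feeding in the constraints coming from $\pi_e(G)=\{1,3,p^s,q^t,\ldots\}$ with each of $p,q,\ldots$ a prime different from $3$ (in particular at least one such prime exists, so $|\pi(G)|\geq 2$). The three facts I would extract first and use throughout are: (i) $3\in\pi(G)$; (ii) the Sylow $3$-subgroups of $G$ have exponent $3$, since $9\notin\pi_e(G)$; and (iii) since $G$ is an $EPPO$-group, any element whose order is divisible by $9$ has order a power of $3$, so by (ii) \emph{no} element of $G$ has order divisible by $9$ — and the same then holds in any quotient of $G$, because an element of order $9$ in a quotient lifts to an element of order divisible by $9$.

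For the non-solvable part one simply runs down the list in Theorem \ref{theorem-Shi}$(2)$ and discards the groups violating (i)--(iii). The orders of ${\rm Sz}(8)$ and ${\rm Sz}(32)$ are coprime to $3$, so neither they nor any extension of an elementary abelian $2$-group by them satisfies (i); they are eliminated, as is the branch with $G/P\cong {\rm Sz}(8)$ or ${\rm Sz}(32)$. Among $L_2(q)$ for $q=5,7,8,9,17$, the groups $L_2(8)$ and $L_2(17)$ contain an element of order $9$ (as $9\mid q+1$ in both cases), so they violate (ii)--(iii) and are removed, and likewise the branch with $G/P\cong L_2(8)$ is removed by the quotient remark in (iii). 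What remains is exactly $L_2(5)\cong A_5$, $L_2(7)$, $L_2(9)\cong A_6$, $L_3(4)$, $M_{10}$, and a group with an elementary abelian normal $2$-subgroup and quotient $L_2(5)$ — i.e.\ cases $(2.1)$--$(2.3)$. The only non-immediate point here is to confirm that $L_3(4)$ and $M_{10}$ have no element of order $9$, which one quotes from their known spectra.

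For the solvable part, Theorem \ref{theorem-Shi}$(1)$ gives $|\pi(G)|\leq 2$, so by (i) we get $\pi(G)=\{3,q\}$ for a unique prime $q\neq 3$; moreover $G$ is not nilpotent, since a nilpotent group divisible by two primes has an element of composite order, contradicting $EPPO$. Hence $G$ falls in the ``solvable and non-nilpotent'' branch with $|G|=3^{\alpha}q^{\beta}$. I would then instantiate the parametric conclusion of Theorem \ref{theorem-Shi}$(1)$ — the maximal normal $p$-subgroup $P_1$ with $|P_1|=p^{\gamma}$, the divisibility $p^{\alpha-\gamma}\mid q-1$, the Sylow subgroups of $G/P_1$ being cyclic or generalized quaternion, and $\gamma=kb$ with $b$ the appropriate exponent — separating cases according to which of the two primes plays the role of ``$p$'' there, and, when $q=2$, according to whether the Sylow $2$-subgroups are generalized quaternion. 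Using (ii) to force every $3$-chief factor to have order $3$ (so the relevant exponent equals $1$ on the $3$-side) and reading off the chief-factor structure then yields precisely cases $(1.1)$ ($q\neq 2$), $(1.2)$ ($q=2$, Sylow $2$ not generalized quaternion), and $(1.3)$ ($q=2$, Sylow $2$ generalized quaternion, maximal normal subgroup a $3$-group).

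The main obstacle is this last translation step in the solvable case: Theorem \ref{theorem-Shi}$(1)$ is stated in a compact parametric form, and the real work is carrying it over faithfully into the three chief-factor descriptions while correctly tracking which prime is ``$p$'' and folding in the exponent-$3$ constraint from (ii). By contrast the non-solvable case is a short finite verification against known orders and spectra.
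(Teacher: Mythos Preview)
Your proposal is correct and matches the paper's approach: the paper gives no proof at all, stating only that the corollary is ``an immediate consequence of Theorem~\ref{theorem-Shi}'', and what you have written is precisely a careful unpacking of that consequence --- extracting the constraints $3\in\pi(G)$ and $9\notin\pi_e(G)$ from the hypothesis, then filtering the lists in Theorem~\ref{theorem-Shi}(1) and~(2) accordingly. Your own caveat about the solvable case is apt: the parametric statement of Theorem~\ref{theorem-Shi}(1) is terse, and translating it into the chief-factor descriptions of $(1.1)$--$(1.3)$ while keeping track of which prime plays the role of ``$p$'' there is indeed where any real work lies, but this is bookkeeping rather than a missing idea.
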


\section{On the Number of Edges in $\mathcal{P}^\ast (G)$ and Related Results}

\begin{lm}\label{lemma10} Let $G$ be a finite group. Then the number
of edges $e^\ast$ of $\mathcal{P}^\ast (G)$ is given by
$$2e^\ast=\sum_{g\in G^\#}\big(2o(g)-\phi(o(g))-3\big),$$
where $G^\#=G\setminus \{1\}$. Especially, if $G$ is an elementary
$p$-group of order $p^m$, then $$e^\ast=(p^m-1)(p-2)/2.$$ In
particular, if $G$ is an elementary abelian $2$-group,  then the
graph $\mathcal{P}^\ast (G)$ is a null graph.
\end{lm}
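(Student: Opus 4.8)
The plan is to use the handshake identity $2e^\ast = \sum_{g\in G^\#} d_\Gamma(g)$, where $\Gamma=\mathcal{P}^\ast(G)$, and to evaluate each degree $d_\Gamma(g)$ by comparing $\Gamma$ with the directed power graph $D=\vec{\mathcal{P}}^\ast(G)$, which has the same vertex set $G^\#$ and whose underlying undirected graph is exactly $\Gamma$. For any vertex $g$ this gives $d_\Gamma(g)=d^+(g)+d^-(g)-b(g)$, where $d^+(g),d^-(g)$ are the out- and in-degrees in $D$ and $b(g)=|N^+(g)\cap N^-(g)|$ is the number of bidirected arcs at $g$.

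First I would compute the two terms available in closed form. In $D$ one has $N^+(g)=\langle g\rangle\setminus\{1,g\}$, so $d^+(g)=o(g)-2$; and $y\in N^+(g)\cap N^-(g)$ means $y\in\langle g\rangle$ and $g\in\langle y\rangle$, i.e. $\langle y\rangle=\langle g\rangle$ with $y\neq g$, so $b(g)=\phi(o(g))-1$. (Deleting the identity is harmless here: $1$ is an out-neighbour but never an in-neighbour or a bidirected neighbour of $g\neq 1$.) Hence $d_\Gamma(g)=o(g)-1-\phi(o(g))+d^-(g)$.

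The remaining summand $d^-(g)$ need not be evaluated individually: summing over $G^\#$, both $\sum_g d^+(g)$ and $\sum_g d^-(g)$ equal the number of arcs of $D$, so $\sum_{g\in G^\#}d^-(g)=\sum_{g\in G^\#}d^+(g)=\sum_{g\in G^\#}(o(g)-2)$. Substituting,
$$2e^\ast \;=\; \sum_{g\in G^\#}\bigl(o(g)-1-\phi(o(g))\bigr) \;+\; \sum_{g\in G^\#}\bigl(o(g)-2\bigr) \;=\; \sum_{g\in G^\#}\bigl(2o(g)-\phi(o(g))-3\bigr),$$
as claimed. For the special cases, in an elementary abelian $p$-group of order $p^m$ every $g\in G^\#$ has $o(g)=p$ and $\phi(o(g))=p-1$, so $2e^\ast=(p^m-1)\bigl(2p-(p-1)-3\bigr)=(p^m-1)(p-2)$, whence $e^\ast=(p^m-1)(p-2)/2$; for $p=2$ this is $0$, so $\mathcal{P}^\ast(G)$ is a null graph.

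I expect the only delicate point to be the bookkeeping behind $d_\Gamma(g)=d^+(g)+d^-(g)-b(g)$ — namely identifying $N^+(g)\cap N^-(g)$ with the non-trivial generators of $\langle g\rangle$ and verifying that passing from $\mathcal{P}(G)$ to $\mathcal{P}^\ast(G)$ changes only $d^+(g)$, by exactly one — after which the result follows from the elementary digraph identity $\sum d^+=\sum d^-$.
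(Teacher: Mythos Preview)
Your proof is correct. The bookkeeping is sound: in $D=\vec{\mathcal{P}}^\ast(G)$ one has $d^+(g)=o(g)-2$, the bidirected neighbours of $g$ are exactly the $\phi(o(g))-1$ other generators of $\langle g\rangle$, and the identity $\sum_g d^-(g)=\sum_g d^+(g)$ lets you bypass any explicit computation of $d^-(g)$.

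The paper's own proof is not really a proof at all: it simply invokes Theorem~4.2 of \cite{CSS} for the general formula and declares the special cases to be direct computations. Your argument, by contrast, is self-contained and in fact recovers the cited result using exactly the digraph machinery the paper develops in Lemmas~\ref{elementary-results-digraphs} and~\ref{cyclic}. The one genuinely new ingredient you bring is the global identity $\sum d^+=\sum d^-$, which sidesteps the need to know $d^-(g)$ for an individual $g$ (a quantity the paper only computes in the cyclic case, Lemma~\ref{cyclic}). This is a cleaner route than summing a closed-form degree formula, and it makes clear why the answer depends only on the orders $o(g)$ and not on how the cyclic subgroups sit inside $G$.
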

\begin{proof} The first assertion follows immediately from \cite[Theorem
4.2]{CSS}. The rest of lemma can be verified by direct
computations.
\end{proof}

An immediate consequence of Lemma \ref{lemma10} is the following.

\begin{corollary}\label{corollary3} Let $G$ be a finite group. Then, there holds
$$2e^\ast=\sum_{n\in \pi_e(G)\setminus\{1\}}s_n\big(2n-\phi(n)-3\big),$$
where $s_n$ is the number of elements with order $n$.
\end{corollary}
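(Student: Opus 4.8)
The plan is to derive this directly from Lemma~\ref{lemma10} by a straightforward reindexing of the sum according to element orders. Recall that Lemma~\ref{lemma10} gives
$$2e^\ast=\sum_{g\in G^\#}\big(2o(g)-\phi(o(g))-3\big),$$
where $G^\#=G\setminus\{1\}$. The key observation is that the summand $2o(g)-\phi(o(g))-3$ depends on $g$ only through the order $o(g)$; it is therefore constant on each set $G_n:=\{g\in G : o(g)=n\}$ of elements of a fixed order $n$.

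First I would note that $\pi_e(G)\setminus\{1\}$ is precisely the set of orders realized by non-trivial elements of $G$, so the sets $G_n$ with $n\in\pi_e(G)\setminus\{1\}$ form a partition of $G^\#$, and by definition $|G_n|=s_n$. Splitting the sum over $G^\#$ into this partition gives
$$2e^\ast=\sum_{n\in\pi_e(G)\setminus\{1\}}\ \sum_{g\in G_n}\big(2o(g)-\phi(o(g))-3\big)
=\sum_{n\in\pi_e(G)\setminus\{1\}}\ \sum_{g\in G_n}\big(2n-\phi(n)-3\big),$$
and since the inner summand no longer depends on $g$, the inner sum equals $s_n\big(2n-\phi(n)-3\big)$. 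Collecting these yields exactly the claimed identity.

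There is essentially no obstacle here: the only point worth stating carefully is that the partition of $G^\#$ by order is indexed by $\pi_e(G)\setminus\{1\}$ and that $\sum_n s_n=|G^\#|$, which confirms no elements are lost or double-counted. Thus the corollary is immediate from Lemma~\ref{lemma10}.
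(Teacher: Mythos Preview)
Your proof is correct and follows exactly the approach the paper indicates: the paper states that this corollary is an immediate consequence of Lemma~\ref{lemma10}, and your reindexing of the sum over $G^\#$ by element order is precisely what makes that implication immediate.
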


The next result is also a simple consequence of Theorem
\ref{planar} and Lemma \ref{lemma10}.
\begin{corollary}
Let $G$ be a nontrivial finite group such that the graph
$\mathcal{P}^\ast (G)$ is $2$-partite. Then the number of edges
of $\mathcal{P}^\ast (G)$ is equal to half the number of elements
of order $3$ in $G$.
\end{corollary}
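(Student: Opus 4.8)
The plan is to combine the characterization of bipartiteness from Theorem~\ref{2partite} with the edge-counting formula of Corollary~\ref{corollary3}. First I would invoke Theorem~\ref{2partite}: since $\mathcal{P}^\ast(G)$ is $2$-partite (bipartite), we get $\pi_e(G)\subseteq\{1,2,3\}$, so the only possible nontrivial element orders occurring in $G$ are $2$ and $3$.

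Next I would apply Corollary~\ref{corollary3}, which asserts that
$$2e^\ast=\sum_{n\in\pi_e(G)\setminus\{1\}}s_n\big(2n-\phi(n)-3\big),$$
where $s_n$ denotes the number of elements of $G$ of order $n$. Because $\pi_e(G)\setminus\{1\}\subseteq\{2,3\}$, at most the terms $n=2$ and $n=3$ survive. A one-line computation gives $2\cdot 2-\phi(2)-3=0$ and $2\cdot 3-\phi(3)-3=1$, so
$$2e^\ast=s_2\cdot 0+s_3\cdot 1=s_3,$$
that is, $e^\ast=s_3/2$, which is precisely half the number of elements of order $3$ in $G$.

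There is essentially no obstacle here: the conclusion is immediate once the two cited results are in place, so the ``hard part'' is really just bookkeeping with the coefficients $2n-\phi(n)-3$. The only point worth a closing remark is the internal consistency that $s_3$ must be even (since $e^\ast$ is an integer): indeed the elements of order $3$ split into pairs $\{x,x^{-1}\}$ with $x\neq x^{-1}$, and each such pair contributes exactly the single edge $x\sim x^2$ to $\mathcal{P}^\ast(G)$. One could equally well bypass Corollary~\ref{corollary3} and read the count $e^\ast=s_3/2$ directly from the explicit description $\mathcal{P}^\ast(G)\cong(K_1\oplus\cdots\oplus K_1)\oplus(K_2\oplus\cdots\oplus K_2)$ obtained in the proof of Theorem~\ref{2partite}, where the number of $K_2$-components equals $s_3/2$.
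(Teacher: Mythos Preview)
Your proof is correct and follows essentially the same route the paper intends: the paper states the corollary as a simple consequence of the bipartiteness characterization together with the edge-counting formula (Lemma~\ref{lemma10}, of which Corollary~\ref{corollary3} is the restatement you use), and your argument supplies exactly those details. The paper's cross-reference to Theorem~\ref{planar} appears to be a slip for Theorem~\ref{2partite}, which is what you correctly invoke; your closing remarks on the parity of $s_3$ and the direct reading from the $K_1\oplus\cdots\oplus K_2\oplus\cdots$ decomposition are welcome additions but not required.
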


Below is a number-theoretic proposition, whose validity is
verified by direct computations. However, we will prove it using a
graph approach.

\begin{proposition}\label{proposition4} Let $p$ be a prime number and $n$ a positive integer.
Then there holds
$$\sum_{i=1}^n\phi(p^i)\big(2p^i-\phi(p^i)-3\big)=2{p^n-1\choose 2}.$$
\end{proposition}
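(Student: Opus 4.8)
The plan is to read both sides of the identity as the (doubled) edge count of one concrete graph, namely the power graph of the cyclic group of order $p^n$. Set $G=\mathbb{Z}_{p^n}$. Since $p^n$ is a prime power, Lemma \ref{complete-CSS} tells us that $\mathcal{P}(G)$ is complete, hence $\mathcal{P}^\ast(G)$ is the complete graph $K_{p^n-1}$ and its number of edges is $e^\ast=\binom{p^n-1}{2}$. This accounts for the right-hand side after multiplying by $2$.

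For the left-hand side I would invoke Corollary \ref{corollary3}, which gives
$$2e^\ast=\sum_{m\in\pi_e(G)\setminus\{1\}}s_m\big(2m-\phi(m)-3\big),$$
where $s_m$ is the number of elements of $G$ of order $m$. For $G=\mathbb{Z}_{p^n}$ one has $\pi_e(G)\setminus\{1\}=\{p,p^2,\ldots,p^n\}$ and the standard fact that the cyclic group $\mathbb{Z}_{p^n}$ has exactly $\phi(p^i)$ elements of order $p^i$ for each $1\le i\le n$. Substituting $m=p^i$ and $s_{p^i}=\phi(p^i)$ into the formula yields
$$2e^\ast=\sum_{i=1}^n\phi(p^i)\big(2p^i-\phi(p^i)-3\big).$$

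Combining the two evaluations of $2e^\ast$ gives $\sum_{i=1}^n\phi(p^i)\big(2p^i-\phi(p^i)-3\big)=2\binom{p^n-1}{2}$, which is exactly the asserted identity. There is no real obstacle here beyond bookkeeping: the only points requiring care are the correct citation of Lemma \ref{complete-CSS} to conclude completeness of $\mathcal{P}(\mathbb{Z}_{p^n})$, and the elementary observation that $s_{p^i}=\phi(p^i)$ in a cyclic $p$-group, after which Corollary \ref{corollary3} does all the work. (Alternatively, one can bypass Corollary \ref{corollary3} and apply Lemma \ref{lemma10} directly to $G=\mathbb{Z}_{p^n}$ in the form $2e^\ast=\sum_{g\in G^\#}(2o(g)-\phi(o(g))-3)$, grouping the sum over $G^\#$ according to the order of $g$.)
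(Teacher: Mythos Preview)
Your proposal is correct and follows essentially the same approach as the paper's own proof: interpret both sides as twice the edge count of $\mathcal{P}^\ast(\mathbb{Z}_{p^n})$, use Lemma~\ref{complete-CSS} to identify this graph with $K_{p^n-1}$ for the right-hand side, and apply Corollary~\ref{corollary3} together with $s_{p^i}=\phi(p^i)$ in a cyclic group for the left-hand side. The paper's argument is identical in substance and structure.
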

\begin{proof} Let $G$ be a cyclic group of order $p^n$. Then, on the one
hand, the graph $\mathcal{P}^\ast (G)$ is a complete graph on
$p^n-1$ vertices and then the number of its edges is equal to
\begin{equation}\label{equation3}
e^\ast={p^n-1\choose 2}.
\end{equation}
On the other hand, we know that $\pi_e(G)=\{1, p, p^2, \ldots,
p^n\}$ and by Corollary \ref{corollary3}, we obtain that
\begin{equation}\label{equation4}
2e^\ast=\sum_{i=1}^ns_{p^i}\big(2p^i-\phi(p^i)-3\big),
\end{equation}
where $s_{m}$ signifies the number of elements with order $m$.
Note that $s_m=k\phi(m)$, where $k$ is number of cyclic subgroups
of order $m$ and $\phi(m)$ Euler totient function. Moreover, since
$G$ is a cyclic group, $k=1$ for all $m\in \pi_e(G)$. Thus,
$s_{p^i}=\phi(p^i)$ for each $i=1, 2, \ldots, n$. If this is
substituted in Eq. (\ref{equation4}), then we obtain
\begin{equation}\label{equation5}
e^\ast=\frac{1}{2}\sum_{i=1}^n\phi(p^i)\big(2p^i-\phi(p^i)-3\big).
\end{equation}
The result now follows by comparing Eqs. (\ref{equation3}) and
(\ref{equation5}). \end{proof}

\end{document}